\newtheorem{thm}{Theorem}
\newtheorem{lem}{Lemma}[section]
\newtheorem{cor}{Corollary}[section]
\newtheorem{conj}[thm]{Conjecture}
\newtheorem{prop}[lem]{Proposition}
\theoremstyle{definition}
\newtheorem{defn}[lem]{Definition}
\theoremstyle{remark}
\newtheorem{rem}{Remark}[section]
\numberwithin{equation}{section}
\newenvironment{theorem}[2][Theorem]{\begin{trivlist}
\item[\hskip \labelsep {\bfseries #1}\hskip \labelsep {\bfseries #2}]}{\end{trivlist}}
\newcommand{\norm}[1]{\left\Vert#1\right\Vert}
\newcommand{\set}[1]{\left\{#1\right\}}
\newcommand{\pd}[2]{\frac{\partial #1}{\partial #2}}
\newcommand{\into}{\hookrightarrow}
\newcommand{\G}{\Gamma}
\newcommand{\calA}{\mathcal{A}}
\newcommand{\calC}{\mathcal{C}}
\newcommand{\calF}{\mathcal{F}}
\newcommand{\calH}{\mathcal{H}}
\newcommand{\calI}{\mathcal{I}}
\newcommand{\calJ}{\mathcal{J}}
\newcommand{\calM}{\mathcal{M}}
\newcommand{\calO}{\mathcal{O}}
\newcommand{\calP}{\mathcal{P}}
\newcommand{\calR}{\mathcal{R}}
\newcommand{\bbZ}{\mathbb{Z}}
\newcommand{\bbQ}{\mathbb{Q}}
\newcommand{\bbR}{\mathbb R}
\newcommand{\bbC}{\mathbb C}
\newcommand{\bbN}{\mathbb N}
\newcommand{\bbH}{\mathbb{H}}
\newcommand{\Tr}{ \mathrm{tr}}
\newcommand{\SL}{ \mathrm{SL}}
\newcommand{\SO}{ \mathrm{SO}}
\newcommand{\PSL}{ \mathrm{PSL}}
\newcommand{\PGL}{ \mathrm{PGL}}
\newcommand{\SU}{ \mathrm{SU}}
\newcommand{\PSU}{ \mathrm{PSU}}
\newcommand{\PSO}{ \mathrm{PSO}}
\newcommand{\Li}{ \mathrm{Li}}
\newcommand{\Ram}{ \mathrm{Ram}}
\newcommand{\sgn}{ \mathrm{sgn}}
\newcommand{\GL}{ \mbox{GL}}
\newcommand{\Mat}{\mathrm{Mat}}
\newcommand{\vol}{\mathrm{vol}}
\newcommand{\Reg}{\mathrm{Reg}}
\newcommand{\bfE}{\mathbf{E}}
\newcommand{\lap}{\triangle}
\newcommand{\bs}{\backslash}
\newcommand{\id}{1\!\!1}
\newcommand{\reg}{\mathrm{Reg}}
\newcommand{\f}{\mathfrak{f}}
\newcommand{\g}{\mathfrak{g}}
\renewcommand{\Im}{\mathrm{Im}}
\begin{document}
\title[Holonomy of closed geodesics]
{Distribution of holonomy about closed geodesics in a product of hyperbolic planes}%
\author{Dubi Kelmer}%
\address{Department of Mathematics, University of Chicago,  5734 S. University
Avenue Chicago, Illinois 60637}
\email{kelmerdu@math.uchicago.edu}

\thanks{}%
\subjclass{}%
\keywords{}%

\date{\today}%
\dedicatory{}%
\commby{}
\begin{abstract}
Let $\calM=\Gamma\bs \calH^{(n)}$, where $\calH^{(n)}$ is a product of $n+1$ hyperbolic planes and $\Gamma\subset\PSL(2,\bbR)^{n+1}$ is an irreducible cocompact lattice. We consider closed geodesics on $\calM$ that propagate locally only in one factor. We show that, as the length tends to infinity, the holonomy rotations attached to these geodesics become equidistributed in $\PSO(2)^n$ with respect to a certain measure.
For the special case of lattices derived from quaternion algebras, we can give another interpretation of the holonomy angles under which this measure arises naturally.
\end{abstract}
\maketitle
\section*{Introduction}
In the spirit of the analogy between prime numbers and primitive closed geodesics, Parry and Pollicott proved an analog to the Chebotarev Equidistribution Theorem for the equidistribution of holonomy classes about closed geodesics. By parallel transporting vectors along geodesics, each closed geodesic $C$ on an orientable $d$ dimensional manifold $\calM$ gives rise to a conjugacy class $H_C$ in $SO(d-1)$, called the holonomy class attached to $C$. In \cite{ParryPollicott86}, Parry and Pollicott showed that on a manifold, for which the geodesic flow on the frame bundle is topologically mixing, the holonomy classes become equidistributed in $SO(d-1)$ with respect to Haar measure as the length of the geodesics tends to infinity. In particular, this is true for manifolds with constant negative curvature, that is, the rank one locally symmetric spaces $\calM=\G\bs \SO_0(d,1)/\SO(d)$.

In \cite{SarnakWakayama99}, Sarnak and Wakayama obtained similar results for all other rank one locally symmetric spaces, that is, spaces of the form $\calM=\G\bs G/K$ with $G$ denoting a real semi-simple group of real rank 1, $K\subseteq G$ a maximal compact subgroup, and $\Gamma$ a lattice in $G$. In this setting, the frame flow is not ergodic and the holonomy classes lie in a smaller subgroup $K_0\subset K\subset \SO(d)$. They showed that as the length of the geodesics tends to infinity, the holonomy classes become equidistributed in $K_0$ with respect to Haar measure.

In this paper we study a similar question for the higher rank group $G=\PSL(2,\bbR)^{n+1}$. That is, we address the equidistribution of holonomy classes of closed geodesics on the manifold $\calM=\G\bs G/K=\G\bs \calH^{(n)}$, where
$\calH^{(n)}=\bbH_0\times\bbH_1\times\cdots\times\bbH_n$ is a product of $n+1$ hyperbolic planes, $K=\PSO(2)^{n+1}$, and $\Gamma\subset \PSL(2,\bbR)^{n+1}$ is an irreducible cocompact lattice. The geodesic flow is not ergodic on the unit tangent bundle $T^1\calM$ as each of the functions $E_j(z,\xi)=\langle \xi_j,\xi_j\rangle_{z_j}$ remain constant under the flow (here, $\langle\cdot,\cdot\rangle_{z_j}$ denotes the inner product on the tangent space $T_{z_j}\bbH$).
For each fixed level $\bfE\in [0,\infty)^{n+1}$ the geodesic flow on the corresponding energy shell
$$\Sigma(\bfE)=\set{(z,\xi)\in T\calM|E_j(z,\xi)=E_j}\subset T\calM,$$
is ergodic and we consider the closed geodesics on each one of these shells separately.

In this setting, the holonomy attached to a closed curve on $\calM$ lies in $K=\PSO(2)^{n+1}$. Since this group is commutative, the holonomy does not depend on the base point and each holonomy class is given by $n+1$ rotations.  Moreover, since parallel transporting along a geodesic maps the tangent vector to itself, the holonomy is trivial in any factor where $E_j\neq 0$. Hence, the holonomy of closed geodesics on $\Sigma(\bfE)$ lie in the smaller group $\prod_{E_j=0}\PSO(2)\subset K$.

We will concentrate on the specific shell
$\Sigma_0=\Sigma(1,0,\ldots,0)$, for which the holonomy group $K_0=\PSO(2)^n$ is as big as possible.
For each closed geodesic $C$ on $\Sigma_0$ we attach the holonomy angle $\theta_C\in(-\pi,\pi)^n$ that gives the angle of rotation in each of the nontrivial factors. We are thus interested in the distribution of these angles as the length of the closed geodesics tends to infinity.

Before stating our results we need to introduce some notations (see section \ref{s:trace} for more details).
Let $L^2(\Gamma\bs G)$ denote the space of functions on $G$ satisfying that $f(\gamma
g)=f(g)$ for $\gamma\in \G$ and that $\int_{\Gamma\bs G}|f(g)|^2dg<\infty$.
The algebra of invariant differential operators of $G$ is of rank
$n+1$ and is generated by $n+1$ differential operators denoted by $\Omega_0,\ldots,\Omega_n$, each acting on the corresponding factor. For any $m\in \bbZ^{n+1}$ let $\chi_m$ denote the characters of $K$ given by $\chi_m(k_\theta)=e^{i m\cdot \theta}$.
We denote by $L^2(\G\bs G,m)\subset L^2(\Gamma\bs G)$ the space of functions satisfying $\psi(gk)=\psi(g)\chi_m(k)$.
For any $m=(0,m_1,\ldots,m_n)\in\bbZ^{n+1}$ with all $m_j\neq 0$ we define the subspace $V_m=V_m(\Gamma\bs G)\subseteq L^2(\Gamma\bs G,m)$ by
\begin{equation}\label{e:Vm}
 V_m=\left\{\psi\in L^2(\G\bs G,m)|\forall j\geq 1,\; \Omega_j\psi+|m_j|(1-|m_j|)\psi=0\right\}.
\end{equation}
Let $\{\psi_{k}^{(m)}\}$ be an orthonormal basis for $V_m(\Gamma\bs G)$ composed of eigenfunctions of $\Omega_{0}$ with eigenvalues $$0<\lambda_0^{(m)}\leq \lambda_1^{(m)}\leq\ldots$$
We use the parameter $\alpha_m=\Im(\sqrt{\lambda_0^{(m)}-\tfrac{1}{4}})\in [0,\tfrac{1}{2})$ to measure the  spectral gap for $V_m$ and let $\alpha=\sup_{m}\alpha_m$. The results of Kelmer and Sarnak \cite{KelmerSarnak09} imply that $\alpha_m< 0.34$ for all but finitely many values of $m$; in particular we have that $\alpha<\tfrac{1}{2}$.

\subsection*{Counting closed geodesics}
We can now estimate the number of closed geodesics on $\Sigma_0$ with bounded length.
For any $x>0$ let $\pi(x)$ denote the number of closed geodesics on $\Sigma_0$ with length $l_C\leq x$;
let $\pi_p(x)$ denote the number of such primitive closed geodesics.
\begin{thm}\label{t:PGT0}
As $x\to\infty$,
$$\pi_p(x)=2^{n}Li(e^x)+O(\frac{e^{3 x/4}}{x})+O(\frac{e^{(\alpha_1+1/2)x}}{x}),$$
where $\alpha_1=\max\{\alpha_m|m\in\{\pm1\}^n\}$.
The same result holds for $\pi(x)$ instead of $\pi_p(x)$.
\end{thm}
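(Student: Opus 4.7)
I will follow the classical derivation of the prime geodesic theorem via the Selberg trace formula, adapted to the product setting. Closed geodesics on $\Sigma_0$ are in bijection with $\G$-conjugacy classes of $\gamma\in\G$ whose first factor $\gamma_0$ is hyperbolic (with translation length $l_C$) and whose remaining factors $\gamma_j$ are nontrivial elliptic (with rotation angles $\theta_{C,j}$). Rather than treating $\pi_p(x)$ directly, the plan is to analyze the Chebyshev-type function $\psi(x)=\sum_{l_C\le x} l_{C_0}$ (summing primitive lengths over all, not necessarily primitive, closed geodesics), since $\pi_p$ and $\pi$ are recovered by standard partial summation, and the difference $\pi(x)-\pi_p(x)=O(e^{x/2})$ is absorbed into the stated error.

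\textbf{Trace formula and $K$-type sum.} For each $m=(0,m_1,\dots,m_n)$ with all $m_j\ne 0$, I would apply the Selberg trace formula on $V_m(\G\bs G)$ with test function factored as a spherical function acting on $\Omega_0$-eigenvectors in the $\bbH_0$-factor via a Selberg transform $g\leftrightarrow h$, and as functions concentrated near the identity with specified $K$-types in each remaining factor. The geometric side then isolates hyperbolic-elliptic conjugacy classes, each contributing schematically
\[
\frac{l_{C_0}\,\hat g(l_C)}{e^{l_C/2}-e^{-l_C/2}}\prod_{j=1}^{n}\frac{e^{-i m_j\theta_{C,j}}}{2\sin\theta_{C,j}},
\]
and the spectral side equals $\sum_k h(r_k^{(m)})$ with $(r_k^{(m)})^2+\tfrac14=\lambda_k^{(m)}$. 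Summing these identities over $m\in\{\pm 1\}^n$ and using $\sum_{m\in\{\pm 1\}^n} e^{-im\cdot\theta}=2^n\prod_j\cos\theta_j$ produces the cancellation that recovers an untwisted hyperbolic-elliptic sum on the geometric side; the residual higher-$K$-type contributions with some $|m_j|>1$ are controlled using the uniform Kelmer--Sarnak bound $\alpha_m<0.34$ cited in the text.

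\textbf{Spectral estimates and conclusion.} Taking $h=h_x$ to approximate the indicator of $[0,x]$ with a smoothing parameter $T$, the main term $2^n e^x$ on $\psi(x)$ arises as the analogue of the residue at $s=1$ of the associated Selberg zeta function (the $2^n$ matching $|\{\pm 1\}^n|$); exceptional eigenvalues $\lambda_k^{(m)}<\tfrac14$ for $m\in\{\pm 1\}^n$ contribute $O(e^{(1/2+\alpha_1)x})$; and the tempered bulk of the spectrum, controlled by the Weyl law for $\{\lambda_k^{(m)}\}$, contributes $O(e^{3x/4})$ after an optimal choice of $T$. Passing from $\psi$ to $\pi_p$ by partial summation introduces the $1/x$ factor and yields the claimed asymptotic.

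\textbf{Main obstacle.} The principal difficulty is the sum over $K$-types $m$: the elliptic orbital integrals in each factor diverge as $|m_j|\to\infty$, so a smooth truncation in the $m$-direction must be introduced and its parameters balanced against those of $h_x$. Equally delicate is the bookkeeping of normalizations in the elliptic orbital integrals and the Fourier inversion in each factor precisely enough to identify the main-term constant $2^n$, while simultaneously verifying that the contributions from $|m_j|>1$ only feed into the subleading error of size $O(e^{3x/4}/x)$.
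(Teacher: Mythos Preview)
Your overall strategy---trace formula on the spaces $V_m$, sum over $K$-types, then partial summation---matches the paper's. But there is a real gap in the algebra at the heart of your argument, and it is precisely the step you flag as your ``main obstacle.''

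The elliptic orbital integral in the hybrid trace formula (Theorem~\ref{t:Htrace}) is not $\frac{e^{-im_j\theta_j}}{2\sin\theta_j}$ but rather $H_m(\theta)=\prod_j \frac{e^{im_j\theta_j}}{1-e^{i\sgn(m_j)\theta_j}}$. With your expression, summing over $m\in\{\pm1\}^n$ gives $\prod_j\cot\theta_j$, not a constant, so you do not recover the untwisted hyperbolic-elliptic sum; you would be forced into an infinite Fourier expansion in the $m$-direction, exactly the divergence you worry about. In the paper's setup the key identity is
\[
(-1)^n\sum_{\sigma\in\{\pm1\}^n} H_{\sigma m}(\theta)\;=\;F_m(\theta)\;=\;\prod_{j=1}^n \frac{\sin\!\big((m_j-\tfrac12)\theta_j\big)}{\sin(\theta_j/2)},
\]
and for $m=(1,\dots,1)$ this is identically $1$. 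Equivalently, in the orthonormal basis $\{H_m\}$ of $L^2((\bbR/2\pi\bbZ)^n,\mu)$ the constant function $f\equiv 1$ has nonzero coefficients \emph{only} at $m\in\{\pm1\}^n$. So there are no ``residual higher-$K$-type contributions'' to control: the sum over $m\in\{\pm1\}^n$ is exact, not a truncation, and this is why only $\alpha_1$ (and not the full $\alpha$) appears in the error term.

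Once you have this, the paper's argument is short: apply the summed hybrid formula (Theorem~\ref{t:Htrace}$'$) with $m=(1,\dots,1)$ and a smoothed cutoff $\hat h_{x,\epsilon}$; the $h(i/2)$ term gives the main contribution $2^{n+1}e^{x/2}$ to $\Theta(x)=\sum_{l_\gamma\le x}\frac{l_{\gamma_p}}{2\sinh(l_\gamma/2)}$, exceptional eigenvalues for $m\in\{\pm1\}^n$ give $O(e^{\alpha_1 x})$, and the tempered spectrum plus smoothing error give $O(e^{x/4})$ after optimizing $\epsilon$. The passage to $\pi_p$ by partial summation is as you describe. Your plan becomes correct once you replace the schematic orbital integral with the actual $H_m$ and observe that $F_1\equiv 1$ eliminates the $K$-type tail entirely.
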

\begin{rem}
If $\alpha_1\leq \tfrac14$ we only have the first error term. In particular, after Selberg \cite{Selberg65}, this is known when $\Gamma$ is a congruence group. (In fact, in this case the result of Kim and Shahidi \cite{KimShahidi02} imply $\alpha\leq \tfrac{1}{9}$.)
Moreover, the congruence subgroup conjecture implies that all lattices $\Gamma$ in $\PSL(2,\bbR)^{n+1}$ are congruence groups, so we expect the error term to be uniformly bounded by $O(e^{3x/4})$.
Unconditionally, if $\lambda_1^{(1)},\ldots,\lambda_q^{(1)}$ are all eigenvalues in $(0,\frac{3}{16})$, the formula can be corrected to read
\[\pi_p(x)=2^{n}Li(e^{x})+(-1)^{n}\sum_{k=1}^qLi(e^{(c_k+1/2) x})+O(\frac{e^{3x/4}}{x}),\]
with $c_k=\Im(\sqrt{\lambda_k^{(1)}-\tfrac{1}{4}})$.
\end{rem}

\begin{rem}
In \cite{Deitmar09}, Deitmar proved an analogue to the prime geodesic theorem for higher rank spaces for singular geodesics. The main term in the asymptotics of $\pi_p(x)$ can also be obtained by applying his results to this setting.
\end{rem}

\subsection*{Distribution of holonomy about closed geodesics}
After establishing the growth rate of $\pi_p(x)$ we study the distribution of the holonomy angles. We find that the results here are very different from the rank one case described above. In this case, the holonomy angles are not equidistributed in $K_0$ with respect to the Haar measure, but rather behave as if they were representing conjugacy classes in $\PSU(2)^n$. Recall that $[-\pi,\pi]^n$ parameterizes the conjugacy classes in $\PSU(2)^n$ and
let $\mu$ denote the measure on $[-\pi,\pi]^n$ obtained via this parametrization from Haar measure on $\PSU(2)^n$, that is,
\begin{equation}\label{e:mu}
d\mu(\theta)=\prod_{j=1}^n\sin^2(\tfrac{\theta_j}{2})\frac{d\theta_j}{\pi}.
\end{equation}
\begin{thm}\label{t:equi}
For any smooth function $f\in C^\infty((\bbR/2\pi\bbZ)^{n})$
\[\frac{1}{\pi_p(x)}\mathop{\sum_{l_C\leq x}}_{\mathrm{primitive}} f(\theta_{C})=\mu(f)+O(e^{-x/4})+O(e^{(\alpha-1/2)x}).\]
\end{thm}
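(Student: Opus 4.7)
The plan is a Weyl-type equidistribution argument: expand $f$ in Fourier modes on the torus, estimate each character sum via a twisted prime geodesic theorem, and reassemble. Since $f\in C^\infty((\bbR/2\pi\bbZ)^n)$, write $f(\theta)=\sum_{m\in\bbZ^n}\hat f(m)e^{im\cdot\theta}$, with $|\hat f(m)|$ decaying faster than any polynomial in $|m|$. A direct computation using $\sin^2(\theta_j/2)=(1-\cos\theta_j)/2$ shows
\[
\hat\mu(m)=\int e^{im\cdot\theta}d\mu(\theta)=\prod_{j=1}^n c_{m_j},\qquad c_0=1,\ c_{\pm1}=-\tfrac{1}{2},\ c_{|k|\geq 2}=0,
\]
so $\mu(f)=\sum_{m}\hat f(m)\hat\mu(m)$. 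The theorem therefore reduces to proving, for each $m\in\bbZ^n$, an asymptotic of the form
\[
S_m(x):=\mathop{\sum_{l_C\leq x}}_{\mathrm{primitive}}e^{im\cdot\theta_C}=\hat\mu(m)\,\pi_p(x)+E_m(x)
\]
with $E_m(x)=O\!\bigl(e^{(\alpha_m+1/2)x}/x+e^{3x/4}/x\bigr)$ and implicit constant polynomial in $|m|$.

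To analyze $S_m(x)$ I would apply the Selberg trace formula on $\G\bs G$ to a product test function $h=h_0\otimes h_1\otimes\cdots\otimes h_n$. Here $h_0$ on $\PSL(2,\bbR)$ has Harish-Chandra/Selberg transform localizing hyperbolic lengths near $x$, exactly as in the proof of Theorem \ref{t:PGT0}, while each $h_j$ for $j\geq 1$ is $K$-finite of weight matching $m_j$, chosen so that its elliptic orbital integral at rotation angle $\theta_j$ reproduces $e^{im_j\theta_j}$ up to explicit normalization. The geometric side then picks out precisely the smoothed sum $S_m(x)$, since only products of hyperbolic-in-factor-$0$ with elliptic-in-other-factors conjugacy classes survive, and the spectral side decomposes into automorphic representations of $K$-type compatible with $m$, whose $\Omega_0$-eigenvalues are exactly the $\lambda_k^{(m')}$ in the spaces $V_{m'}$ of \eqref{e:Vm}.

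The main term $\hat\mu(m)\pi_p(x)$ arises from contributions at spectral parameter $s=1$ on $\Omega_0$, which occurs only when $m\in\{-1,0,+1\}^n$: for each $m_j=\pm 1$ the lowest discrete series $D_1^{\pm}$ has vanishing Casimir and provides the $s=1$ pole, and a direct identification of residues produces exactly the coefficients $c_{m_j}$ together with the factor $2^n$ already present in Theorem \ref{t:PGT0}; for $m$ with some $|m_j|\geq 2$ no such pole exists and $\hat\mu(m)=0$ is correctly recovered. The nontrivial part of the spectrum then gives $E_m(x)$ of size $O(e^{(\alpha_m+1/2)x}/x)$ by a standard Tauberian inversion, and the exceptional small eigenvalues contribute the $O(e^{3x/4}/x)$ piece exactly as in Theorem \ref{t:PGT0}.

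The main technical obstacle is uniformity in $m$: one needs a Weyl-law-type bound for the number of $V_{m'}$-eigenvalues in a spectral window, together with control on the relevant norms of the test functions $h_j$, both with at most polynomial dependence on $|m|$. Given this, the rapid decay of $\hat f(m)$ makes $\sum_m|\hat f(m)|\,E_m(x)$ absolutely summable, and dividing by $\pi_p(x)\sim 2^ne^x/x$ yields the claimed error $O(e^{-x/4})+O(e^{(\alpha-1/2)x})$, using $\alpha=\sup_m\alpha_m<\tfrac12$ together with the bound $\alpha_m<0.34$ of \cite{KelmerSarnak09} for all but finitely many $m$, so that only a finite set of modes requires individual control.
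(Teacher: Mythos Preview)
Your overall strategy---Fourier expansion, trace formula for each mode, uniformity in $m$, reassembly---is the same as the paper's. The paper packages the mode-by-mode estimate as Proposition~\ref{p:main} and then integrates by parts exactly as you sketch. However, there is a real gap in your proposal at the trace-formula step, and a related suboptimal choice of basis.

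You write that for $j\geq 1$ you will choose $h_j$ ``$K$-finite of weight matching $m_j$, chosen so that its elliptic orbital integral at rotation angle $\theta_j$ reproduces $e^{im_j\theta_j}$'', and that the spectral side will then land in the spaces $V_{m'}$. This is precisely the hard part, and it does not come for free. If you work at a single fixed weight $m$ on $L^2(\G\bs G,m)$, the elliptic orbital integral has the form $\tilde h_j(\theta_j,m_j)/\sin(\theta_j/2)$ of \eqref{e:hjm}, which is \emph{not} a pure exponential and does not isolate $V_m$ spectrally. The paper resolves this by an inclusion--exclusion over weights $m,\,m-\sigma_je_j,\ldots$ (Corollary~\ref{l:comb}), which produces the \emph{hybrid} trace formula (Theorem~\ref{t:Htrace}). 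On the geometric side the telescoping identity
\[
\frac{\tilde h(\theta_j,m_j)-\tilde h(\theta_j,m_j-\sigma_j)}{\sin(\theta_j/2)}=\frac{e^{im_j\theta_j}}{1-e^{i\sigma_j\theta_j}}\,h(i(|m_j|-\tfrac12))
\]
is what manufactures a clean exponential-type factor, and on the spectral side the alternating sum is exactly what cuts the spectrum down to $V_m$. Your proposal asserts both outcomes without supplying this mechanism.

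This also explains why the paper expands $f$ not in the standard characters $e^{im\cdot\theta}$ but in the functions
\[
H_m(\theta)=\prod_{j=1}^n\frac{e^{im_j\theta_j}}{1-e^{i\sgn(m_j)\theta_j}},
\]
which form an orthonormal basis of $L^2((\bbR/2\pi\bbZ)^n,\mu)$ and are exactly what the hybrid formula spits out. With that basis, $\mu(H_m)=(-\tfrac12)^n\delta_{m,\sgn(m)}$ gives the main term directly, and the uniformity in $m$ you need is the Weyl-law bound $N_m(x)=O(|m|^*x)$ of Lemma~\ref{l:evcount}, proved by the same hybrid formula. Your computation of $\hat\mu(m)$ and the identification of the main term via the lowest discrete series are correct, but they are downstream of the missing step above; once you have the hybrid formula, both are immediate. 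One minor correction: the $O(e^{3x/4}/x)$ term does not come from ``exceptional small eigenvalues'' but from the smoothing/unsmoothing of the length cutoff (optimizing $\epsilon$ in Lemma~\ref{l:shortsum} and \eqref{e:smoothing1}).
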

Since we can approximate the indicator function by smooth functions this implies that
for any $A\subset [-\pi,\pi]^n$ with $\mu(\partial A)=0$ we have
\[\lim_{x\to\infty}\frac{\pi_p(x;A)}{\pi_p(x)}=\mu(A),\]
where $\pi_p(x;A)=\#\{C \;\mathrm{primitive}|l_C\leq x \mbox{ and }\theta_C\in A\}$.
\begin{rem}
The implied constant for the error term in Theorem \ref{t:equi} can be given explicitly in terms of the Fourier coefficients of the test function $f$. In particular, when the set $A\subset [-\pi,\pi]^n$ is sufficiently nice (for example any rectangle) it is possible to give exponential bounds for the error term also for a sharp cutoff (see Corollary \ref{c:equi}).
\end{rem}

\subsection*{Invariance of holonomy angles under sign changes}
The measure $\mu$ defined in (\ref{e:mu}) is invariant under the change of variable $\theta\mapsto \sigma\theta$ for any $\sigma\in\{\pm 1\}^n$, where we denote $\sigma\theta=(\sigma_1\theta_1,\ldots,\sigma_n\theta_n)$. In particular, given any set $A\subset[-\pi,\pi]$ (with $\mu(\partial A)=0$) we have that $\pi_p(x;A)$ is asymptotically the same as $\pi_p(x,\sigma A)$. We wish study the question of whether we can have an exact equality.
\begin{defn}
The holonomy angles are said to be invariant under the sign change $\sigma\in\{\pm 1\}^n$ if for any angle $\theta\in[-\pi,\pi]^n$, the number of primitive closed geodesics $C$ with $\theta_C=\theta$ is equal to the number of primitive closed geodesics $C'$ with $\theta_{C'}=\sigma\theta$.
\end{defn}

\begin{rem}
We note that if $C$ is a closed geodesic with length $l_C$ and holonomy angle $\theta_C$, then for any other closed geodesic, $l_{C'}=l_{C}$ if and only if $\theta_{C'}\in \{\sigma \theta_C|\sigma\in\{\pm 1\}^n\}$ (see Remark \ref{r:traces}). In particular, if the holonomy angles are invariant under all sign changes, then the number of closed geodesics with a fixed length is divisible by $2^n$.
\end{rem}

From a dynamical point of view it is not clear why this invariance should hold.  To further illustrate how remarkable this invariance is we relate it to a question of Selberg \cite{Selberg95}, which is, whether the spectrum of the space $V_m(\G\bs G)$, defined in (\ref{e:Vm}), changes when we change the sign of the $m_j$'s.
\begin{thm}\label{t:sign}
The holonomy angles are invariant under a sign change of $\sigma\in\{\pm1\}^n$, if and only if there is a spectral correspondence between $V_m(\G\bs G)$ and $V_{\sigma m}(\G\bs G)$ for all weights $m$. That is, there is a linear isomorphism  $\Theta_\sigma:V_m(\Gamma\bs G)\to V_{\sigma m}(\Gamma\bs G)$ that commutes with $\Omega_0$.
\end{thm}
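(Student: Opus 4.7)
The plan is to deduce both implications from a Selberg-type trace formula that isolates the space $V_m$ on the spectral side and encodes the holonomy data of closed geodesics on $\Sigma_0$ on the geometric side. Concretely, for a suitable test function $h$ in the length variable one should have an identity of the shape
\[
\sum_k \widehat{h}(\lambda_k^{(m)}) = (\text{identity term}) + \sum_{[C]} a_m(C)\, h(l_C),
\]
where $[C]$ runs over primitive closed geodesics on $\Sigma_0$ (equivalently over $\Gamma$-conjugacy classes whose first component is hyperbolic and whose remaining components are elliptic), and the weight is
\[
a_m(C) = e^{i m \cdot \theta_C}\, b(l_C,\theta_C),
\]
with $b$ a smooth geometric factor that is manifestly invariant under $\theta_C \mapsto \sigma\theta_C$ for every $\sigma\in\{\pm 1\}^n$. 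Such a formula should be extracted from the trace formula developed in Section \ref{s:trace} once the test function is projected onto the $K$-type $\chi_m$ and the spectrum is restricted by the $\Omega_j$-eigenvalue conditions cutting out $V_m$.

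Granted this trace formula, the equivalence in the theorem reduces to Fourier analysis in the holonomy variable. For the forward direction, suppose $\Theta_\sigma:V_m\to V_{\sigma m}$ exists and commutes with $\Omega_0$ for every admissible $m$; then the multisets $\{\lambda_k^{(m)}\}$ and $\{\lambda_k^{(\sigma m)}\}$ coincide, so the spectral sides of the trace formulas for $m$ and $\sigma m$ agree for every $h$. Since $(\sigma m)\cdot\theta_C = m\cdot(\sigma\theta_C)$ and $b$ is $\sigma$-symmetric, matching the geometric sides gives
\[
\sum_{[C]} b(l_C,\theta_C)\,h(l_C)\bigl(e^{i m\cdot\theta_C} - e^{i m\cdot(\sigma\theta_C)}\bigr) = 0
\]
for every admissible $h$. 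Letting $m$ range over all weights and inverting the Fourier series in $\theta\in[-\pi,\pi]^n$ yields the equality of the counting measures at $\theta$ and at $\sigma\theta$, which is the desired invariance. For the converse, if the counts are invariant under $\sigma$ then the geometric sides for weights $m$ and $\sigma m$ coincide for every $h$; by the trace formula the spectral sides agree too, and varying $h$ forces the eigenvalue multisets on $V_m$ and $V_{\sigma m}$ to be equal. One then defines $\Theta_\sigma$ by choosing orthonormal bases $\{\psi_k^{(m)}\}$ and $\{\psi_k^{(\sigma m)}\}$ ordered by eigenvalue and setting $\Theta_\sigma \psi_k^{(m)} = \psi_k^{(\sigma m)}$; by construction this is a linear isomorphism commuting with $\Omega_0$.

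The main obstacle is to pin down the trace formula in the precise form above: one must choose a test function of the correct $K$-type $\chi_m$, compute its orbital integral along hyperbolic-elliptic conjugacy classes, and verify that the elliptic Harish-Chandra transform in each factor $j\geq 1$ produces exactly a character factor $e^{i m_j \theta_{C,j}}$ once the $\Omega_j$-eigenvalue condition is enforced. A secondary point is that the identity and any central or otherwise exceptional contributions must depend only on the unordered data $|m|=(|m_1|,\dots,|m_n|)$, so that they cancel in the comparison between $m$ and $\sigma m$; this should follow from the standard formulas but must be checked. Once these ingredients are in place, the argument above is essentially formal Fourier analysis on $[-\pi,\pi]^n$.
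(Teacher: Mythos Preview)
Your overall strategy---compare the hybrid trace formula for weight $m$ with that for $\sigma m$---is exactly what the paper does. But two of your structural assumptions about that formula are wrong, and fixing them is where the actual work lies.

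First, the orbital integral on a hyperbolic--elliptic class does \emph{not} factor as $e^{im\cdot\theta_C}\,b(l_C,\theta_C)$ with $b$ sign-symmetric. What the hybrid trace formula (Theorem~\ref{t:Htrace}) actually produces is the weight
\[
H_m(\theta)=\prod_{j=1}^n \frac{e^{im_j\theta_j}}{1-e^{i\,\sgn(m_j)\theta_j}},
\]
whose denominator depends on $\sgn(m)$ and is not even in $\theta_j$. The argument survives because of the identity $H_m(\sigma\theta)=H_{\sigma m}(\theta)$, which is what you really need; but your ``produce exactly a character factor $e^{im_j\theta_{C,j}}$'' expectation is false, and the singularity of $H_m$ at $\theta_j=0$ means you cannot simply ``invert the Fourier series'' in the spectral $\Rightarrow$ holonomy direction. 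The paper instead localizes: it takes $\hat h$ supported on $[-l-\delta,l+\delta]$ and a smooth $H(\theta)$ supported near a single angle, and runs a minimality-in-$l$ contradiction argument.

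Second, your claim that all non-geodesic contributions ``depend only on $|m|$'' is wrong for the finitely many \emph{elliptic} (torsion) conjugacy classes. They contribute terms of the form
\[
\frac{\tilde h(\theta_{\gamma_0},0)}{M_{\gamma_p}\sin(\theta_{\gamma_0}/2)}\,H_m(\theta_\gamma),
\]
which carry the same $H_m$ weight and do not cancel between $m$ and $\sigma m$. In the holonomy $\Rightarrow$ spectral direction the paper keeps these terms and shows they are $O(1)$ (resp.\ $O(\epsilon)$) for carefully chosen test functions $h_x$, $h_{x,\epsilon}$, so that one can still peel off the exceptional and then the principal eigenvalues one at a time. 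In the spectral $\Rightarrow$ holonomy direction the paper chooses the support of the angular test function to avoid the finitely many elliptic angles. You need to account for these terms; as written, your comparison of geometric sides is off by exactly this elliptic discrepancy.
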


For any geodesic $C$ denote by $C^{-1}$ its time reversal. We note that $\theta_{C^{-1}}=-\theta_C$, which implies that the primitive closed geodesics are invariant under $\sigma=(-1,\ldots,-1)$. Correspondingly, we have that complex conjugation sends $V_m$ to $V_{-m}$ and commutes with $\Omega_0$. For other signs $\sigma\in\{\pm 1\}^n$ it is not clear how to construct such a map and, as mentioned by Selberg in his lecture notes \cite{Selberg95}, whether such a correspondence even exists. It is thus interesting to find examples of lattices for which we can prove this correspondence. The following theorem gives many such examples.

\begin{thm}\label{t:sign2}
Let $K$ be a number field satisfying that its class number $h_K$ equals its narrow class number $h_K^+$.
Assume that $\Gamma$ is a lattice derived from a maximal order in a quaternion algebra defined over $K$, or a principal congruence group in such a lattice. Then, the holonomy angles are invariant under all sign changes.
\end{thm}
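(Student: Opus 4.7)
The plan is to invoke Theorem \ref{t:sign} and reduce the statement to producing, for each $\sigma \in \{\pm 1\}^n$, an explicit spectral correspondence $\Theta_\sigma : V_m(\G\bs G) \to V_{\sigma m}(\G\bs G)$ that commutes with $\Omega_0$. These correspondences will be realized concretely as conjugation operators by suitable elements of the quaternion algebra $B/K$.

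Enlarge $G = \PSL(2,\bbR)^{n+1}$ to $G' = \PGL(2,\bbR)^{n+1}$. The matrix $w = \bigl(\begin{smallmatrix} 1 & 0 \\ 0 & -1 \end{smallmatrix}\bigr)$ satisfies $w k_\theta w^{-1} = k_{-\theta}$, so any $\alpha \in G'$ whose determinant has sign pattern $(+, \sigma_1, \ldots, \sigma_n)$ at the $n+1$ factors normalizes $\PSO(2)^{n+1}$ and conjugates $k_{(\theta_0,\theta_1,\ldots,\theta_n)}$ to $k_{(\theta_0,\sigma_1\theta_1,\ldots,\sigma_n\theta_n)}$. Consequently, if such an $\alpha$ also satisfies $\alpha\G\alpha^{-1} = \G$, then
\begin{equation*}
\Theta_\alpha \psi(g) := \psi(\alpha^{-1} g \alpha)
\end{equation*}
is a well-defined unitary operator on $L^2(\G\bs G)$ carrying $L^2(\G\bs G, m)$ isometrically onto $L^2(\G\bs G, \sigma m)$, and commuting with $\Omega_0$ because $\Omega_0$ is bi-invariant under $G'$. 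Its restriction to $V_m$ is then the desired spectral correspondence.

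The problem thus reduces to producing, for each $\sigma \in \{\pm 1\}^n$, an element $\alpha_\sigma \in B^*$ in the normalizer $N_{B^*}(\mathcal O)$ of a defining maximal order $\mathcal O$, whose reduced norm $n(\alpha_\sigma) \in K^*$ has sign $+$ at the $0$-th split real place, sign $\sigma_j$ at the $j$-th split real place for $j = 1,\ldots,n$, and sign $+$ at every real place where $B$ is ramified (the last being forced by Hasse--Schilling--Maass). The norm theorem of Hasse--Schilling--Maass produces elements of $B^*$ with any admissible sign pattern. The hypothesis $h_K = h_K^+$ is equivalent to the surjectivity of the sign map $\mathcal O_K^* \to \{\pm 1\}^{r_1}$, supplying units of $\mathcal O_K$ realizing every sign pattern. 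Using these units together with Atkin--Lehner-type generators $w_{\mathfrak p}$ of the two-sided $\mathcal O$-ideals at the primes $\mathfrak p$ dividing the discriminant of $B$, one adjusts the $B^*$-element with the correct norm signs into the normalizer $N_{B^*}(\mathcal O)$ while preserving those signs.

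The main obstacle is precisely this last arithmetic construction: merely producing an element of $B^*$ with prescribed norm signs is immediate from Hasse--Schilling--Maass, but also enforcing that it normalizes the order $\mathcal O$ requires a careful analysis of the image of the reduced norm on $N_{B^*}(\mathcal O)$ modulo squares, and it is exactly the narrow-class-number condition $h_K = h_K^+$ that renders this image large enough to realize every admissible sign pattern. For a principal congruence subgroup $\G(I) \subseteq \G$ attached to a two-sided ideal $I$ of $\mathcal O$, one further needs $\alpha_\sigma \G(I) \alpha_\sigma^{-1} = \G(I)$; this is arranged by multiplying $\alpha_\sigma$ on the right by an element of $\G$ congruent to $\alpha_\sigma^{-1}$ modulo $I$, whose existence follows from strong approximation for the norm-one group $\{\beta \in B^* : n(\beta) = 1\}$.
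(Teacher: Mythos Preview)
Your central idea---produce, for each $\sigma$, an element $\alpha_\sigma$ in the $\PGL$-normalizer of $\Gamma$ whose reduced norm has sign pattern $\sigma$---is exactly the paper's idea. But there is a genuine gap in how you deploy it.

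The claim that ``any $\alpha \in G'$ whose determinant has sign pattern $(+,\sigma_1,\ldots,\sigma_n)$ normalizes $\PSO(2)^{n+1}$'' is false. The element $w=\bigl(\begin{smallmatrix}1&0\\0&-1\end{smallmatrix}\bigr)$ normalizes $\PSO(2)$ because it lies in $PO(2)$, not because its determinant is negative; a generic element of $\PGL(2,\bbR)$ with negative determinant (e.g.\ $\bigl(\begin{smallmatrix}1&1\\0&-1\end{smallmatrix}\bigr)$) does not normalize $\PSO(2)$. Consequently your operator $\Theta_\alpha\psi(g)=\psi(\alpha^{-1}g\alpha)$ does \emph{not} carry $K$-type $m$ to $K$-type $\sigma m$: one has $\Theta_\alpha\psi(gk)=\psi((\alpha^{-1}g\alpha)(\alpha^{-1}k\alpha))$, and $\alpha^{-1}k\alpha$ need not lie in $K$ at all. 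So the spectral correspondence is not constructed, and the appeal to Theorem~\ref{t:sign} is vacuous.

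The paper avoids this entirely by working on the geometric side rather than the spectral side. Proposition~\ref{p:condition} observes that if $\tau\in N_{\tilde G}(\Gamma)$ has $\sgn_0(\det\tau)=\sigma$, then $\gamma\mapsto\tau^{-1}\gamma\tau$ is a bijection from primitive hyperbolic-elliptic classes with angle $\theta$ to those with angle $\sigma\theta$ (since an elliptic element conjugated by something of negative determinant is $\PSL(2,\bbR)$-conjugate to the inverse rotation). This gives sign invariance of holonomy directly---no detour through $V_m$ at all. The spectral correspondence then follows \emph{a posteriori} from Theorem~\ref{t:sign}, rather than being an input.

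Your arithmetic is also more elaborate than necessary. The paper simply notes that $\calR^*\subseteq N_{\calA^*}(\calR^1)$ trivially (units conjugate norm-one elements to norm-one elements), that $n_\calA(\calR^*)=\calO_K^*\cap K_\calA$ by a standard result (Proposition~\ref{p:norm}), and that $h_K=h_K^+$ is equivalent to $\sgn(\calO_K^*)=\{\pm1\}^{P_\infty}$. No Hasse--Schilling--Maass on $B^*$, no Atkin--Lehner elements, no adjustment into the normalizer is needed---the units of the order are already there. For principal congruence subgroups $\Gamma(\g)$ the same $\calR^*$ works, since $\alpha^{-1}(\gamma-I)\alpha\in\g\calR$ whenever $\alpha\in\calR^*$ and $\gamma-I\in\g\calR$; strong approximation plays no role.
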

We recall that, by Margulis's Arithmeticity Theorem, any irreducible lattice in $\PSL(2,\bbR)^{n+1}$ is commensurable to a lattice derived form a maximal order in some quaternion algebra, and that this algebra is defined over the trace field of $\Gamma$ (that is, the field generated by traces of elements in $\Gamma$); see \cite{Margulis91}. We thus formulate the following conjecture:
\begin{conj}
For a lattice $\Gamma$ with trace field $K$ satisfying that $h_K=h_K^+$ the holonomy angles are invariant under all sign changes.
\end{conj}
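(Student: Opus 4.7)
By Theorem~\ref{t:sign}, the conjecture reduces to constructing, for every weight $m$ and every sign $\sigma\in\{\pm 1\}^n$, a linear isomorphism $\Theta_\sigma:V_m(\G\bs G)\to V_{\sigma m}(\G\bs G)$ commuting with $\Omega_0$. I would realize $\Theta_\sigma$ geometrically as conjugation by an element $t_\sigma\in \PGL(2,\bbR)^{n+1}$ satisfying $t_\sigma \G t_\sigma^{-1}=\G$, preserving orientation at the $0$-th factor, and reversing orientation in exactly those factors $j\ge 1$ with $\sigma_j=-1$. Conjugation by such $t_\sigma$ induces a length-preserving bijection on $\G$-conjugacy classes of closed geodesics on $\Sigma_0$ that flips the holonomy angle in precisely the designated factors, which is the required invariance.

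\textbf{Arithmetic construction of $t_\sigma$.} By Margulis arithmeticity, $\G$ is commensurable with a lattice $\G_0=\mathcal{O}^1/\{\pm 1\}$ attached to a maximal order $\mathcal{O}$ in a quaternion algebra $B$ over $K$, necessarily unramified at every real place. Via the $n+1$ real embeddings of $K$, the group $B^\times(K)/K^\times$ embeds as a dense subgroup of $\PGL(2,\bbR)^{n+1}$, and for $\alpha\in B^\times(K)$ the sign of the determinant in the $j$-th factor equals $\sgn(v_j(\mathrm{nr}(\alpha)))$. The hypothesis $h_K=h_K^+$ is exactly the surjectivity of the sign map $\mathcal{O}_K^\times\to\{\pm 1\}^{n+1}$, so every prescribed sign pattern in $K^\times$ is realizable up to multiplication by a unit. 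Combined with Eichler's norm theorem---which is vacuous at the archimedean places since $B$ splits there---this produces an $\alpha_\sigma\in B^\times(K)$ whose reduced norm has sign pattern $(+,\sigma_1,\dots,\sigma_n)$.

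\textbf{Descent to $\G$ and the main obstacle.} For $\G_0$ (and its principal congruence subgroups) Theorem~\ref{t:sign2} completes this construction by adjusting $\alpha_\sigma$ within $K^\times\cdot\mathcal{O}^\times$ so that it lies in the global normalizer $N_{B^\times}(\mathcal{O})$, via strong approximation in the norm-one group $B^{(1)}$. The conjectural step is to obtain such a normalizing element for an arbitrary $\G$ commensurable with $\G_0$. For a congruence lattice one would attempt to combine Theorem~\ref{t:sign2} applied to a principal congruence sub-lattice $\G'\subset\G\cap\G_0$ with a Frobenius reciprocity / induction argument over the finite quotient $\G/\G'$, verifying that induction intertwines the spectral correspondences $\Theta_\sigma$ at every weight. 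This compatibility is the heart of the matter; for a general, potentially non-congruence, lattice there is no adelic description to guide the construction, and even granting the congruence subgroup conjecture, matching the global Atkin--Lehner / normalizer element to the local level structure of $\G$ at every finite place is the principal obstacle keeping this statement a conjecture rather than a theorem.
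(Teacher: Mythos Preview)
This statement is a \emph{Conjecture} in the paper, not a theorem; the paper offers no proof. Your proposal is therefore not being compared against a proof but against the paper's evidence and its proof of the special case (Theorem~\ref{t:sign2}). On that count your outline is accurate: the route via Proposition~\ref{p:condition} and the normalizer $N_{\tilde G}(\Gamma)$ is exactly how the paper handles maximal orders and principal congruence subgroups, and you correctly isolate the descent to an arbitrary commensurable $\Gamma$ as the missing step.

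Two comments. First, a minor slip: the quaternion algebra $B$ is not ``necessarily unramified at every real place''. The trace field $K$ may have degree larger than $n+1$, and $B$ is ramified at the extra real places; this is why the paper's proof of Theorem~\ref{t:sign2} requires the unit $x$ to be positive at those places (so that $x\in K_\calA$) in addition to having the prescribed signs at $\nu_1,\dots,\nu_n$. The hypothesis $h_K=h_K^+$ still delivers such a unit, so your argument survives, but Eichler's condition is not vacuous.

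Second, the induction idea has a genuine gap beyond what you flag. Suppose $\Gamma'\lhd\Gamma$ is a principal congruence subgroup for which Theorem~\ref{t:sign2} applies. The spectral correspondence $\Theta_\sigma$ on $V_m(\Gamma'\bs G)$ furnished by the paper is conjugation by a specific $t_\sigma\in N_{\tilde G}(\Gamma')$. To descend to $V_m(\Gamma\bs G)$, which is the $\Gamma/\Gamma'$-invariant subspace, you need $\Theta_\sigma$ to intertwine the $\Gamma/\Gamma'$-action; but conjugation by $t_\sigma$ sends $\Gamma/\Gamma'$-invariants to $(t_\sigma\Gamma t_\sigma^{-1})/\Gamma'$-invariants, and these coincide only if $t_\sigma$ already normalizes $\Gamma$---which is the very thing you are trying to avoid assuming. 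Appealing instead to the \emph{abstract} spectral equality from Theorem~\ref{t:sign} does not help either: equality of spectra on $V_m(\Gamma'\bs G)$ and $V_{\sigma m}(\Gamma'\bs G)$ gives no control over how the eigenspaces decompose under $\Gamma/\Gamma'$. So Frobenius reciprocity alone does not bridge the gap; one needs either a normalizing element for $\Gamma$ itself or a genuinely different mechanism.
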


We note that there are number fields satisfying that $h_K=h_K^+$ so that this condition is not empty.
For example, let $K_q$ denote the maximal real subfield of the cyclothymic field $\bbQ(e^{\frac{2\pi i}{q}})$, then $h_{K_q}=h_{K_q}^+$ whenever $q=2^a$ is a dyadic power (cf. \cite[Remark 3.4]{KimLim07}).
Also, when $q$ is a prime number such that $\tfrac{q-1}{2}$ is also prime, Taussky's conjecture implies that $h_{K_q}=h_{K_q}^+$ (see \cite{KimLim07} for some generalizations and results towards this conjecture).
On the other hand, Garbanati \cite{Garbanati76} showed that if $q$ is not a prime power, then $h_{K_q}\neq h_{K_q}^+$ so there are also many examples of number fields where this condition does not hold.

\subsection*{An interpretation for the holonomy angles}
We give another interpretation for the holonomy angles which explains their limiting distribution.
Assume that the lattice $\Gamma\subset\PSL(2,\bbR)^{n+1}$ is derived from a maximal order in a quaternion algebra.
When $n$ is even we can find a corresponding lattice $\tilde\Gamma\subset\PSL(2,\bbR)$ derived from a maximal order in a different quaternion algebra that is ramified in the same finite places and unramified in one infinite place. We then have a correspondence between closed geodesics on $\tilde \calM=\tilde\Gamma\bs\bbH$ and closed geodesics on $\Sigma_0$.
Moreover, the lattice $\tilde\Gamma$ comes equipped with a natural homomorphism $\rho:\tilde\Gamma\to \PSU(2)^n$ which enables us to attach to each closed geodesic $C$ on $\tilde \calM$ a conjugacy class $\{u_C\}$ in $\PSU(2)^n$ (see section \ref{s:geometric} for details). We then have the following correspondence:
\begin{thm}\label{t:Correspondence}
For each closed geodesic on $\Sigma_0$ there is a closed geodesic on $T^1\tilde\calM$ with the same length.
On the other hand, for each closed geodesic $C$ in $T^1\tilde \calM$ with a corresponding conjugacy class $\{u_C\}$ there are $2^n$ corresponding closed geodesics $\{C_i\}_{i=1}^{2^n}$ in $\Sigma_0$ with the same length $l_C=l_{C_i}$ and holonomy angles satisfying $|2\cos(\theta_{C_i}/2)|=|\Tr(u_C)|$.
\end{thm}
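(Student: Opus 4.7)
The plan is to exhibit the correspondence by relating conjugacy classes in the two lattices via their reduced traces in $\calO_K$. Let $B$ denote the quaternion algebra over $K$ from which $\Gamma$ is derived, so that $B$ is unramified at every infinite place of $K$. Because $n$ is even, the parity constraint on the ramification set of a quaternion algebra lets us choose a quaternion algebra $\tilde B$ over $K$ ramified at the same finite places as $B$ together with the infinite places $\sigma_1,\ldots,\sigma_n$. Fix a maximal order $\tilde\calO\subset\tilde B$ and set $\tilde\Gamma=\tilde\calO^{1}/\{\pm1\}$. The embedding $\sigma_0$ identifies $\tilde B\otimes_{K,\sigma_0}\bbR\cong M_2(\bbR)$, giving $\tilde\Gamma\subset\PSL(2,\bbR)$, while $\sigma_j$ for $j\ge1$ identifies $\tilde B\otimes_{K,\sigma_j}\bbR$ with the Hamilton quaternions $\bbH$; the induced maps into $\bbH^1/\{\pm1\}=\PSU(2)$ assemble into $\rho:\tilde\Gamma\to\PSU(2)^n$.

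Next, parameterize closed geodesics on both sides by their reduced traces. A primitive closed geodesic $C$ on $\Sigma_0$ corresponds to a primitive $\Gamma$-conjugacy class of $\gamma\in\Gamma$ with $\sigma_0(\gamma)$ hyperbolic and $\sigma_j(\gamma)$ elliptic for every $j\ge1$; equivalently, $t=\mathrm{trd}(\gamma)\in\calO_K$ satisfies $|\sigma_0(t)|>2$ and $|\sigma_j(t)|<2$ for $j\ge1$. A primitive closed geodesic on $T^1\tilde\calM$ corresponds to a primitive $\tilde\Gamma$-conjugacy class of hyperbolic $\tilde\gamma\in\tilde\Gamma$, equivalently to $\tilde t\in\calO_K$ with $|\sigma_0(\tilde t)|>2$ (the bounds $|\sigma_j(\tilde t)|\le2$ for $j\ge1$ are automatic since $\sigma_j(\tilde\gamma)\in\bbH^1$). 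In either setting the element generates the quadratic field $L=K(\sqrt{t^2-4})$ over $K$, which is real-split at $\sigma_0$ and complex at $\sigma_1,\ldots,\sigma_n$, and the translation length equals $l=2\cosh^{-1}(|\sigma_0(t)|/2)$. The Hasse embedding criterion then shows $L\hookrightarrow B$ if and only if $L\hookrightarrow\tilde B$, since the finite ramification sets agree and the infinite conditions on $L$ are matched complementarily to the infinite ramification of both algebras. This already yields the first half of the theorem: for every $C$ on $\Sigma_0$ with trace $t$ there exists an element $\tilde\gamma\in\tilde\Gamma$ with the same reduced trace, producing a closed geodesic on $T^1\tilde\calM$ of the same length.

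The main technical step is the reverse count. For a fixed admissible $\tilde t\in\calO_K$, apply Eichler's theory of optimal embeddings to compute the number of conjugacy classes of elements in $\calO^1$ and in $\tilde\calO^1$ with reduced trace matching $\tilde t$ (up to the relevant signs). Both counts factor as a product of local embedding numbers of $\calO_K[\sqrt{t^2-4}]$ into the completions of the respective order, times a global term controlled by the class group and unit group of the order in $L$. At every finite prime the local factors coincide, since $B$ and $\tilde B$ share finite ramification data and both orders are maximal; at the $n$ infinite places where $\tilde B$ is ramified but $B$ is split, the local comparison produces a factor of $2$ in favor of $\calO^1$, because each sign configuration of the elliptic traces $\sigma_j(t)$ for $j\ge1$ gives a distinct optimal embedding on the $B$ side, while the $\tilde B$ side is insensitive to those signs (elements of $\PSU(2)$ with traces $t$ and $-t$ are conjugate). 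Multiplying over $j=1,\ldots,n$ gives the factor $2^n$ and produces the required $2^n$ closed geodesics $\{C_i\}_{i=1}^{2^n}$ on $\Sigma_0$.

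Finally, the length and holonomy relations follow directly. The length of each $C_i$ equals $l_{\tilde C}$ by the trace match at $\sigma_0$. For the holonomy, if $\gamma_i\in\Gamma$ represents $C_i$ with reduced trace $t_i\in\calO_K$ and $\sigma_j(\gamma_i)\in\PSL(2,\bbR)$ is elliptic with rotation angle $\theta_{C_i,j}/2$, then $2\cos(\theta_{C_i,j}/2)=\pm\sigma_j(t_i)$ by direct computation of the trace; on the $\tilde\calM$ side the $j$-th component of $u_C=\rho(\tilde\gamma)\in\PSU(2)^n$ has trace $\pm\sigma_j(\tilde t)$; since $|\sigma_j(t_i)|=|\sigma_j(\tilde t)|$ by construction, we obtain the componentwise identity $|2\cos(\theta_{C_i,j}/2)|=|\Tr(u_{C,j})|$, which is the stated relation. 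The principal obstacle is the careful optimal-embedding bookkeeping in the third step, where the factor $2^n$ must be extracted from the comparison between the two algebras; this is also where one must verify that the $2^n$ classes are genuinely distinct closed geodesics and not repeated counts.
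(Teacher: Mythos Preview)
Your overall strategy---parameterize conjugacy classes on both sides by reduced traces $t\in\calO_K$, invoke the Hasse embedding criterion for the quadratic field $L=K(\sqrt{t^2-4})$, and compare optimal embedding counts---is exactly the approach the paper takes (see Proposition~\ref{p:correspondence1} together with Lemma~\ref{l:count1} and Proposition~\ref{p:optimal}). The length and holonomy identities follow from the trace match just as you describe.

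The gap is in your third paragraph, where you extract the factor $2^n$. You claim that ``at the $n$ infinite places where $\tilde B$ is ramified but $B$ is split, the local comparison produces a factor of $2$'' and that this arises because ``each sign configuration of the elliptic traces $\sigma_j(t)$ gives a distinct optimal embedding on the $B$ side.'' This is not correct. First, once $t\in\calO_K$ is fixed, the real numbers $\sigma_j(t)$ are determined---there is no sign configuration to choose. Second, and more importantly, the local--global formula for optimal embedding numbers (e.g.\ the version in Vign\'eras) has \emph{no local factors at archimedean places}; the infinite places enter only through a global term. In the paper's formulation (Proposition~\ref{p:optimal}),
\[
m^1(\calO,\calR)=\frac{h(\calO)}{h_K}\,[\calO_K^*:n_{L/K}(\calO^*)]\,2^{-r_\calA}\prod_{\nu\in\Ram_f(\calA)}m^*(\calO_\nu,\calR_\nu),
\]
and the factor $2^{-r_\calA}$ comes from the comparison between the narrow-type class number $h_{K_\calA}$ and $h_K$. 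Since $\Ram_f(\calA)=\Ram_f(\tilde\calA)$ the finite product agrees, while $r_{\tilde\calA}-r_\calA=n$, giving $m^1(\calO,\calR)=2^n\,m^1(\calO,\tilde\calR)$. That is the source of the $2^n$.

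Your heuristic conflates the $2^n$ in the conjugacy-class count with the $2^n$ possible sign patterns of the holonomy angles $(\pm\theta_{C,1},\ldots,\pm\theta_{C,n})$. These are genuinely different issues: the paper devotes a separate section (Theorems~\ref{t:sign} and~\ref{t:sign2}) to the question of whether all sign patterns are actually realized, and shows this depends on an arithmetic condition on $K$ (namely $h_K=h_K^+$). The $2^n$ count in Theorem~\ref{t:Correspondence} holds unconditionally and does \emph{not} assert that the $2^n$ geodesics exhaust all sign patterns. So you should replace the archimedean-local argument with the global class-number computation above.
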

With this correspondence, Theorem \ref{t:equi} is equivalent to the equidistribution of the conjugacy classes $\{u_C\}$ as the lengths $l_C\to\infty$. We note that the equidistribution of these conjugacy classes can be proved (in a more general setting) via a different method using the Selberg trace formula on the space of vector valued functions on $\tilde{\Gamma}\bs \bbH$.

\begin{rem}
Theorem \ref{t:Correspondence} implies in particular that the number of primitive closed geodesics with a fixed length (and holonomy angle $\theta_C\in\{\sigma \theta:\sigma\in\{\pm 1\}^n\}$) is divisible by $2^n$. Note that for this theorem we do not assume that $h_K=h_K^+$. This suggests that the geodesics could be invariant under all sign changes even without this condition on the trace field.
\end{rem}

\subsection*{Outline of paper}
For the readers convenience we include a short outline of the paper.
In section \ref{s:Background}, we recall the essential background for the geodesic flow on $\calM=\Gamma\bs\calH^{(n)}$ and the relation between closed geodesics, holonomy, and conjugacy classes. In section \ref{s:trace} we recall the spectral theory on $\Gamma\bs G$ and in particular the Selberg trace formula.  In section \ref{s:Estimates}, we use the trace formula to prove Theorems \ref{t:PGT0} and \ref{t:equi}. The trace formula is also used in section \ref{s:sign} to prove Theorem \ref{t:sign}. In section \ref{s:quaternion}, we recall some results on lattices derived from quaternion algebras and prove Theorems \ref{t:sign2} and \ref{t:Correspondence}.
In section \ref{s:applications}, we give some additional applications. In particular, we prove a special case of the Jacquet-Langlands correspondence and we obtain some results about the distribution of fundamental units corresponding to certain quadratic orders.

\subsection*{Acknowledgements} I thank Zeev Rudnick for his comments.

\section{Background}\label{s:Background}
\subsection{The hyperbolic plane}
Let $\bbH=\set{x+iy| y>0}$ denote the upper half plane. For $z=x+iy\in \bbH$ we identify $T_z\bbH\cong\bbC$; with this identification the Riemannian metric is given by the inner product $\langle \xi,\eta\rangle_z=\frac{\Re(\xi \bar{\eta})}{y^2}$ on $T_z\bbH$.
There is a natural action of $\PSL(2,\bbR)=\SL(2,\bbR)/\pm I$ on $\bbH$ by isometries. That is
$z\mapsto g(z)=\frac{az+b}{cz+d}$, with the induced map on the tangent space given by
\[(z,\xi)\mapsto (g(z),g'(z)\xi),\]
where $g'(z)=\frac{1}{(cz+d)^2}$ denotes the differential of $g$.
After fixing the point $(i,i)\in T^1\bbH$ we can identify the unit tangent bundle with $\PSL(2,\bbR)$ via the action  $g\mapsto (g(i),g'(i)i)$.
For any $x,t\in\bbR$ and $\theta\in [-\pi,\pi]$ let
$$n_x=\begin{pmatrix} 1 & x \\ 0& 1\end{pmatrix},\;a_t=\begin{pmatrix} e^{t/2} & 0 \\ 0& e^{-t/2}\end{pmatrix},\;k_\theta=\begin{pmatrix} \cos(\tfrac \theta 2) & \sin(\tfrac \theta 2) \\ -\sin(\tfrac \theta 2) & \cos(\tfrac \theta 2) \end{pmatrix}.$$
For any $g\in \PSL(2,\bbR)$ there is a unique decomposition $g=p_zk_\theta$ with
$p_{z}=\begin{pmatrix}
\sqrt{y}  & x/ \sqrt{y}  \\
0 & 1/ \sqrt{y}
\end{pmatrix}=n_xa_{\ln(y)}$.
In these coordinates the identification of $\PSL(2,\bbR)$ with $T^1\bbH$ is given by $p_zk_\theta(i,i)=(z,e^{i\theta})$.

\subsection{Products of hyperbolic planes}
Let $\calH^{(n)}=\bbH_0\times\cdots\times\bbH_n$ denote a product of $n+1$ hyperbolic
planes endowed with the product metric. That is for $z=(z_0,\ldots,z_n)\in\calH^{(n)}$ the inner product on $T_z\calH^{(n)}\cong\bbC^{n+1}$ is given by
\[\langle\xi,\eta\rangle_{z}=\sum_{j=0}^{n}\langle \xi_j,\eta_j\rangle_{z_j}=\sum_{j=0}^{n}\frac{\Re(\xi_j \bar{\eta_j})}{y_j^2}.\]
There is a natural isometric action of $G=\PSL(2,\bbR)^{n+1}$ on $T\calH^{(n)}$ given by the $\PSL(2,\bbR)$ action on each factor. Each of the functions  $E_j(z,\xi)=\sqrt{\langle \xi_j,\xi_j\rangle_{z_j}}$ is invariant under the action of $G$ (and hence this action is not transitive on $T^1\calH^{(n)}$).
For any $\bfE=(E_0,\ldots,E_n)\in[0,\infty)^{n+1}$ let
$$T_\bfE\calH^{(n)}=\{(z,\xi)\in T\calH^{(n)}| E_j(z,\xi)=E_j\}\subset T\calM.$$
The group $G$ acts transitively on $T_{\bfE}\calH^{(n)}$ with the stabilizer of $(i,\bfE i)$ being $\prod_{E_j=0}K_j$; this give an identification of $T_{\bfE}\calH^{(n)}$ with the quotient $G/\prod_{E_j=0}K_j$.

For any $(z,\xi)\in T\calH^{(n)}$ there is a unique geodesic $C:\bbR\to \calH^{(n)}$ such that
$C(0)=z,\;C'(0)=\xi$. The geodesic flow $\phi^t:T\calH^{(n)}\to T\calH^{(n)}$ sends a point $(z,\xi)$ to the point obtained by flowing for time $t$ along this geodesic.
The spaces $T_\bfE\calH^{(n)}$ remain invariant under this flow. After identifying $T_\bfE\calH^{(n)}$ with $G/\prod_{E_j=0}K_j$ the geodesic flow $\phi^t$ is given by the right action of $a_{\bfE t}=(a_{E_0t},\ldots a_{E_nt})$, that is, if $(z,\xi)=g(i,\bfE i)$, then $\phi^t(z,\xi)=ga_{\bfE t}(i,\bfE i)$.

\subsection{Quotients}
Consider a quotient $\calM=\Gamma\bs\calH^{(n)}$ with $\Gamma$ an irreducible co-compact lattice in $G=\PSL(2,\bbR)^{n+1}$, that is, $\Gamma$ is a discrete subgroup such that the projection to any factor of $G$ is dense and that the quotient
$\G\bs G$ is compact. The geodesic flow on $\calM=\Gamma\bs\calH^{(n)}$ is given by the projection of the geodesic flow on $T\calH^{(n)}$ to $\calM$.
The energy shells
\[\Sigma(\bfE)=\{(x,\xi)\in T\calM| E_j(\xi)=E_j\}\]
are invariant under the geodesic flow and the identification of $T_\bfE\calH^{(n)}$ with $G/\prod_{E_j=0}K_j$ descends to an identification
$\Sigma(\bfE)\cong \G\bs G/\prod_{E_j=0}K_j$. From here on we fix $\bfE_0=(1,0,\ldots,0)$ and let $\Sigma_0=\Sigma(\bfE_0)$.
We denote by $K_0=\prod_{j=1}^n K_j$ and identify $\Sigma_0\cong \G\bs G/K_0$.

\subsection{Holonomy of closed geodesics on $\Sigma_0$}
Let $\calM=\G\bs\calH^{(n)}$ be as above and $\Sigma_0\subset T^1\calM$ denote the energy shell corresponding to $\bfE_0=(1,0,\ldots,0)$.
A closed geodesic of length $l$ on $\calM$ is given by an initial condition $(z,\xi)\in T^1\calM$ such that $\phi^l(z,\xi)=\gamma(z,\xi)$ for some $\gamma\in \Gamma$.
The geodesic is called primitive if $l$ is the smallest time where this holds. The geodesic lies in $\Sigma_0$ if the initial condition satisfies
$E_j(z,\xi)=0\;\forall  j\geq 1$ (that is, if $\xi=(\xi_0,0,\ldots,0)$).

Given any closed curve $C$ with initial condition $(z,\xi)\in T\calM$, parallel transporting tangent vectors about this curve gives the holonomy map $H_C:T_{z}\calM\to T_{z}\calM$. Under the identification $T_z\calM\cong \bbC^{n+1}$ the holonomy map is given by
$H_C(\xi_1,\ldots,\xi_n)=(\xi_0e^{i\theta_0},\ldots, \xi_ne^{i\theta_{n}})$. Changing the base point of the curve will give a conjugate element, and since the holonomy group is commutative $H_C$ does not depend on the base point. Thus, for any closed curve $C$ we attach an angle $\theta_C\in [-\pi,\pi]^{n+1}$ corresponding to the rotations of the holonomy map. Parallel transporting along a geodesic leaves the tangent to the geodesic invariant, hence,  there could be no nontrivial rotations in factors where the tangent vector is nonzero. Consequently, the holonomy map attached to a closed geodesic in $\Sigma_0$ satisfies $\theta_{0}=0$ and we let
$\theta_C\in [-\pi,\pi]^n$ be the angles corresponding to the last $n$ factors.

We have the following correspondence between closed geodesics in $\Sigma_0$ and hyperbolic-elliptic conjugacy classes in $\Gamma$, that is, conjugacy classes of elements that are hyperbolic in the first factor and elliptic in the last $n$ factors. With the identification $\Sigma_0\cong  G/K_0$, a closed geodesic of length $l$ is given by an initial condition $g(i,\bfE_0 i)\in T^1\calM$, with $g\in G$ satisfying that $$ga_{\bfE_0 l}=\gamma g\pmod{K_0}$$ for some $\gamma\in \Gamma$. That is, in the last $n$ factors we have
$g_j^{-1}\gamma_jg_j=k_{\theta_{\gamma,j}}\in K_j$ and in the first factor we have $g_0^{-1}\gamma_0g_0=a_{l_\gamma}$. If we change the initial condition $g$ to an equivalent one $\tau g$ for some $\tau\in \Gamma$ the resulting lattice element will be $\tau^{-1}\gamma\tau$. Hence, for any closed geodesic there is a corresponding hyperbolic-elliptic conjugacy class.  For the other direction, given any hyperbolic-elliptic $\gamma\in \Gamma$ let $g\in G$ such that $g^{-1}\gamma g=(a_{l_\gamma},k_{\theta_{\gamma,1}},\ldots,k_{\theta_{\gamma,n}})$.  Then $ga_{\bfE_0 l}=\gamma g\pmod{K_0}$, so that the initial condition $g(i,\bfE_0i)\in T^1\calM$ gives rise to a closed geodesic of length $l_\gamma$. The choice of such an element is not unique, however, if $\tilde g\in G$ is another element satisfying this, then $\tilde g=g(a_t,k_0)$ with $k_0\in K_0$, so that $\tilde g$ gives rise to the same closed geodesic (with a different starting point). Under this correspondence, a closed geodesic $C$ with length $l_C$ and holonomy angle $\theta_C$ corresponds to a conjugacy class $\{\gamma\}$ with $\theta_{\gamma,j}=\theta_{C,j}$ and $l_{\gamma}=l_C$.

\begin{rem}\label{r:traces}
Note that $\Tr(\gamma_j)=2\cos(\theta_{\gamma,j}/2)$ so that the trace in each factor is determined by the holonomy angle, and the holonomy angle is determined up to a sign by the trace. Moreover, we recall that $\Gamma$ is commensurable to a lattice derived from a quaternion algebra defined over some totally real number field.
Hence, the traces $\Tr(\gamma_j)$ for $j=0,\ldots,n$ are all different embeddings of the same field element into $\bbR$. Consequently, the length of the geodesic determines the holonomy angle up to a sign and the holonomy angle determine the length of the geodesic.
\end{rem}

\section{Trace formula}\label{s:trace}
The main ingredient in the proof of Theorems \ref{t:PGT0} and \ref{t:equi} is the Selberg Trace Formula. In particular, we use a hybrid version of it introduced by Selberg in \cite{Selberg95}.
We now recall some background on the spectral theory of $\PSL(2,\bbR)^{n+1}$ and outline the derivation of the hybrid trace formula.
\subsection{Spectral decomposition}
Let $G=\PSL(2,\bbR)^{n+1}$ and $\G\subset G$ an irreducible co-compact lattice.
Let $\rho:\Gamma\to U(N)$ be an irreducible unitary representation of $\Gamma$. Let $L^2(\Gamma\bs G,\rho)$ denote the space of vector valued functions on $G$ satisfying that $f(\gamma
g)=\rho(\gamma)f(g)$ for $\gamma\in \Gamma$ and that $\int_{\Gamma\bs G}|f(g)|^2dg<\infty$.
For any $m\in \bbZ^{n+1}$ let $\chi_m$ denote the characters of $K$ given by $\chi_m(k_\theta)=e^{i m\cdot \theta}$.
The group $K$ acts on $L^2(\G\bs G)$ (by right multiplication)
and induces a decomposition into the different $K$-types
\[L^2(\G \bs G,\rho)=\bigoplus_{m\in\bbZ^{n+1}} L^2(\G\bs G,\rho,m),\]
where $\psi\in L^2(\G\bs G,\rho,m)$ is of $K$-type $m$ if $\psi(gk)=\psi(g)\chi_m(k)$.

The algebra of invariant differential operators of $G$ is of rank
$n+1$ and is generated by operators $\Omega_0,\ldots,\Omega_n$ given
in the $z,\theta$ coordinates by
$$\Omega_j=y_j^2(\pd{^2}{x_j^2}+\pd{^2}{y_j^2})-2y_j\pd{}{\theta_j}\pd{}{x_j}$$
 (cf. \cite[Chapter X \S 2]{Lang85}).
These operators act on $L^2(\G\bs G,\rho,m)$ and we can further decompose it to a direct sum of joint eigenspaces
\[L^2(\G\bs G,\rho,m)=\bigoplus_{\lambda\in \Lambda(m)} \calC(m,\lambda),\]
where $\Lambda(m)\subset\bbR^n$ is the set of joint eigenvalues and $\calC(m,\lambda)$ the joint eigenspace corresponding to $\lambda=(\lambda_0,\ldots,\lambda_{n})$. We denote the dimension of the joint eigenspaces by $d(m,\lambda)=\dim C(m,\lambda)$.

The operators
$$E_j^{\pm}=\pm 2iy_je^{\pm i\theta_j}(\pd{}{x_j}\mp i\pd{}{y_j})\mp 2ie^{\pm i\theta_j}\pd{}{\theta_j},$$
commute with $\Omega_j$ and act as raising and lowering operators between the different
$K$-types. Moreover, on the space $\calC(m,\lambda)$ the operator $E_j^\pm E_j^\mp$ acts by multiplication by $4(\pm m_j(1\mp m_j))-\lambda_j)$,
so that $$E_j^{\mp}:\calC(m,\lambda)\to\calC(m\mp e_j,\lambda)$$
is a bijection if $\lambda_j\neq \pm m_j(1\mp m_j)$ and vanishes otherwise (see
\cite[Chapter VI \S 4,5]{Lang85}). We thus get the following characterization of the possible eigenvalues in $\Lambda(m)$:
\begin{prop}\label{p:Lambda}
For $\lambda\in \Lambda(0)$ either $\lambda=0$ or $\lambda\in(0,\infty)^{n+1}$.
If $m\neq 0$, then any $\lambda\in \Lambda(m)$ satisfies that
$$\lambda_j\in(0,\infty)\cup \{k(1-k)|k\in\bbN, k\leq |m_j|\}.$$
\end{prop}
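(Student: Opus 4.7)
The plan is to combine the ladder structure of the operators $E_j^{\pm}$ (established just before the proposition) with the unitarity of $L^{2}(\G\bs G,\rho)$ and Moore's ergodicity theorem, analyzing one factor at a time. The first step is to turn the stated formula for $E_j^{\pm}E_j^{\mp}$ into a unitarity bound: since $E_j^{+}$ and $E_j^{-}$ are, up to a sign, formal adjoints on $L^{2}(\G\bs G,\rho)$, for any nonzero $\psi\in\calC(m,\lambda)$ one has
\[\|E_j^{-}\psi\|^{2}=4\bigl(\lambda_j-m_j(1-m_j)\bigr)\|\psi\|^{2},\qquad \|E_j^{+}\psi\|^{2}=4\bigl(\lambda_j+m_j(1+m_j)\bigr)\|\psi\|^{2},\]
and non-negativity of both sides yields the a priori bound $\lambda_j\geq |m_j|(1-|m_j|)$ whenever $\calC(m,\lambda)\neq 0$.

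Next, I would iterate the lowering operator in the $j$-th direction. Fix a nonzero $\psi_{0}\in\calC(m,\lambda)$ and set $\psi_{i}=(E_j^{-})^{i}\psi_{0}\in\calC(m-ie_j,\lambda)$. Either the chain never vanishes, in which case every successive factor $4\bigl(\lambda_j-(m_j-i)(1-(m_j-i))\bigr)$ must be strictly positive; or it vanishes at some first step $i_{0}\geq 0$, which forces $\lambda_j=(m_j-i_{0})(1-(m_j-i_{0}))=k(1-k)$ for the integer $k=m_j-i_0$. Since the quadratic $k(1-k)$ takes only non-positive values at integers, non-termination is equivalent to $\lambda_j>0$; and when termination occurs, the unitarity bound $\lambda_j\geq|m_j|(1-|m_j|)$ forces $k(k-1)\leq|m_j|(|m_j|-1)$, so using the symmetry $k(1-k)=(1-k)k$ we may choose $k\in\bbN$ with $k\leq|m_j|$. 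A symmetric argument with $E_j^{+}$ handles $m_j<0$.

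The main obstacle is the borderline case $\lambda_j=0$ with $m_j=0$, which is exactly what separates the two statements of the proposition from the weaker bound $\lambda_j\geq 0$. Here I would invoke the irreducibility of $\G$. If $\psi\in\calC(m,\lambda)$ has $m_j=0$ and $\lambda_j=0$, the norm identities above give $E_j^{\pm}\psi=0$; combined with $\Omega_j\psi=0$ and $K_j$-invariance, this means $\psi$ is annihilated by the entire Lie algebra of the $j$-th factor $G_j=\PSL(2,\bbR)$, and by connectedness $\psi$ is invariant under the right action of $G_j$ on $\G\bs G$. Moore's ergodicity theorem, applied to the irreducible lattice $\G\subset G=\prod_i G_i$, then implies that the simple noncompact factor $G_j$ acts ergodically on $\G\bs G$; hence $\psi$ is constant, forcing $m=0$ and $\lambda=0$. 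Applied to $\lambda\in\Lambda(0)$ this upgrades the bound $\lambda_j\geq 0$ to the dichotomy $\lambda=0$ or $\lambda\in(0,\infty)^{n+1}$; applied to $\lambda\in\Lambda(m)$ with $m\neq 0$ it rules out $\lambda_j=0$ whenever $m_j=0$, while for indices $j$ with $m_j\neq 0$ the ladder analysis of the previous paragraph gives exactly the description $\lambda_j\in(0,\infty)\cup\{k(1-k):k\in\bbN,\ k\leq|m_j|\}$.
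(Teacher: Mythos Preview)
Your proof is correct and follows essentially the same strategy as the paper's: use the raising/lowering operators to reduce the $m_j\neq 0$ case to the $m_j=0$ case, and for the latter use irreducibility of $\Gamma$ to show that $\lambda_j=0$ forces $\psi$ to be constant. The only cosmetic differences are that you make the adjoint relation and norm identities explicit (the paper just quotes the bijection property of $E_j^{\mp}$ stated immediately before the proposition), and you invoke Moore's ergodicity where the paper argues more directly that a $K_j$-invariant function with $\triangle_j\psi=0$ is independent of $z_j$ and hence constant by density of the projection of $\Gamma$.
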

\begin{proof}
Let $\lambda\in \Lambda(m)$ and let $\psi\in \calC(m,\lambda)$ denote the corresponding joint eigenfunction.
We first show that if $m_j=0$, then $\lambda_j\in(0,\infty)$.
For $m_j=0$ the operator $\Omega_j$  reduces to the hyperbolic Laplacian
$$\lap_j=y_j^2(\pd{^2}{x_j^2}+\pd{^2}{y_j^2}),$$
and the corresponding eigenvalue satisfies $\lambda_j\in[0,\infty)$. Moreover, if $\lambda_j=0$, then $\psi(z_0,\ldots,z_n)$ does not depend on $z_j$ and since $\Gamma$ is irreducible this implies that $\psi$ must be the constant function, which can only occur if $m=0$ (and $\rho$ is the trivial representation).

Next if $m_j>0$ (respectively $m_j<0$), if $\lambda_j\neq k(1-k)$ with $k\leq |m_j|$, then $(E^-)^{m_j}\psi$ (respectively $(E^+)^{|m_j|}\psi$) is in $\calC(\lambda,m-m_je_j)$. The argument for $m_j=0$ now implies that $\lambda_j\in(0,\infty)$.
\end{proof}
\subsection{Trace formula for $L^2(\G\bs G,\rho,m)$}
We recall the trace formula for $L^2(\G\bs G,\rho,m)$ for a weight $m\in \bbZ^{n+1}$ and representation $\rho$.
For $m=0$ and trivial $\rho$ the derivation of the trace formula is given in \cite[Chapter I]{Efrat87}.
For $n=0$ and arbitrary $m$ and $\rho$ it is described in \cite[Chapter 4]{Hejhal76}. For $n\geq 1$ and arbitrary $m$ (when $\Gamma$ is co-compact)
the trace formula takes the following form:\\
Let $h_j,\;j=0,\ldots,n$ be analytic functions with Fourier transforms $\hat{h}_j$ compactly supported. Then
\begin{eqnarray*}\lefteqn{\sum_{\lambda_k\in\Lambda(m)} d(m,\lambda_k)\left(\prod_j h(r_{k,j})\right)=}\\
&&\dim(\rho)\vol(\calF_{\Gamma})\prod_j \left(\frac{-1}{4\pi}\int_{-\infty}^\infty \frac{\hat{h}_j'(u)}{\sinh(u/2)}e^{-m_ju}du\right)\\
 && +\sum_{\{\gamma\}} \chi_\rho(\gamma)\vol(\Gamma_\gamma\bs G_\gamma)\left(\prod_{\gamma_j\sim a_{l_j}}\frac{\hat{h}_j(l_j)}{2\sinh(l_j/2)}\right)\left(\prod_{\gamma_j\sim k_{\theta_j}} \frac{\tilde{h}_{j}(\theta_j,m_j)}{\sin(\theta_j/2)}\right)
\end{eqnarray*}
where $\calF_{\Gamma}$ is a fundamental domain for $\Gamma\bs \calH^{(n)}$, $\chi_\rho(\gamma)=\Tr(\rho(\gamma))$ is the character of $\rho$, and
\begin{equation}\label{e:hjm}
\tilde{h}_{j}(\theta_j,m_j)=\frac{i}{4}\int_{-\infty}^\infty \hat{h}_j(u)e^{(m_j-\frac{1}{2})(u+i\theta_j)}\left[\frac{e^u-e^{i\theta_j}}{\cosh(u)-\cos(\theta)}\right]du.
\end{equation}
The derivation is a direct application of the arguments in \cite{Efrat87,Hejhal76,Hejhal83}. In particular for the explicit form of the $\tilde{h}_{j}(\theta_j,m_j)$ we refer to \cite[equation 6.30 on p. 394]{Hejhal83} and for the integral corresponding to the trivial conjugacy class see \cite[page 396]{Hejhal83}.

\subsection{Hybrid trace formula}
For any $m_1,\ldots,m_n\in\bbZ\setminus\{0\}$ we let $m=(0,m_1,\ldots,m_n)\in\bbZ^{n+1}$ and define $V_m=V_m(\G\bs G,\rho,m)$ by
\begin{equation}\label{e:Vm2}
V_m=\{\psi\in L^2(\G\bs G,\rho,m)|\forall j\geq 1,\; \Omega_j\psi+|m_j|(1-|m_j|)\psi=0\}.
\end{equation}
Let $\{\psi_k^{(m)}\}$ be an orthonormal basis for $V_m$ composed of eigenfunctions of $\Omega_0$ with eigenvalues $$0<\lambda_0^{(m)}\leq \lambda_1^{(m)}\leq\ldots,$$
we use the usual parameterizations of the eigenvalues $\lambda_k^{(m)}=\tfrac{1}{4}+{r_k(m)}^2$.
We note that the eigenfunctions $\psi_k^{(m)}$ are hybrid forms that behave like Maass forms in the first factor and like modular forms of weight $m_j$ on the $j$'th factor. We thus call the trace formula for these forms a hybrid trace formula.

For $m=(0,m_1,\ldots,m_n)$ as above, let
\begin{equation}\label{e:|m|}
|m|^*=\prod_{j=1}^n(2|m_j|-1),
\end{equation}
and define the function $H_m:(-\pi,\pi)^n\to \bbC$ by
\begin{equation}\label{e:Hm}
H_m(\theta)=\prod_{j=1}^n\frac{e^{im_j\theta_{j}}}{(1-e^{i\sgn(m_j)\theta_{j}})},
\end{equation}
where $\sgn(m_j)=\frac{m_j}{|m_j|}$.

Recall that an element $\gamma=(\gamma_0,\ldots,\gamma_n)\in \Gamma$ is called hyperbolic-elliptic if $|\Tr(\gamma_0)|>2$ and $|\Tr(\gamma_j)|<2$ for $j\geq 1$ and elliptic if $|\Tr(\gamma_j)|<2$ for all $j$. We note that an elliptic element $\gamma$ is always of finite order and we denote by $M_\gamma$ its order.  There could be at most finitely many elliptic conjugacy classes.
\begin{thm}\label{t:Htrace}
For any meromorphic function $h(r)$ with $\hat{h}$ compactly supported
\begin{eqnarray*}
\lefteqn{\sum_k h(r_{k}(m)) +(-1)^{n}\delta_{\rho,1}\delta_{m,\sgn(m)}h(i/2)=}\\&&  =\frac{\dim(\rho)|m|^*\vol(\calF_{\Gamma})}{(4\pi)^{n+1}}\int_{-\infty}^\infty h(r)r\tanh(\pi r)dr\\
&&+ {\sum_{\{\gamma\}}}^{'}\frac{\chi_\rho(\gamma)l_{\gamma_p}\hat{h}(l_\gamma)}{2\sinh(l_\gamma/2)}H_m(\theta_{\gamma})
+{\sum_{\{\gamma\}}}''\frac{\chi_\rho(\gamma)\tilde{h}(\theta_{\gamma_0},0)}{M_{\gamma_p}\sin(\theta_{\gamma_0}/2)}H_m(\theta_\gamma)\\
\end{eqnarray*}
where $\sum'$ is a sum over hyperbolic-elliptic conjugacy classes, $\sum''$ is a finite sum over the elliptic conjugacy classes, and $\gamma_p$ denotes the primitive element corresponding to $\gamma$.
\end{thm}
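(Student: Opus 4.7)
The plan is to derive Theorem \ref{t:Htrace} from the trace formula for $L^2(\G\bs G,\rho,m)$ of arbitrary weight $m$ (stated earlier in the section) by making specific choices of test functions on the factors $j\ge 1$ that localize the spectral sum to the single discrete eigenvalue $\lambda_j=|m_j|(1-|m_j|)$ defining $V_m$, following the hybrid approach of Selberg, Efrat and Hejhal. The given $h$ becomes $h_0$; the work is in picking each $h_j$ for $j\ge 1$ and then matching geometric contributions against the stated right-hand side.

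First I would choose each $h_j$ ($j\ge 1$) to be a specific analytic function with $\hat h_j$ compactly supported, arising from the Abel transform of a pseudo-coefficient of the discrete series $D_{m_j}$, adjusted so that $h_j(r)$ vanishes at each of the finitely many exceptional discrete spectral parameters $r=i(k-\tfrac12)$ with $1\le k<|m_j|$ admitted by Proposition \ref{p:Lambda}, and on the tempered line $r\in\bbR$, while $h_j(i(|m_j|-\tfrac12))=1$. Substituting into the weight-$m$ trace formula, the spectral side $\sum_\lambda d(m,\lambda)\prod_j h_j(r_{k,j})$ collapses to $\sum_k h(r_k(m))$ over $\psi_k^{(m)}\in V_m$. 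The extra $(-1)^n h(i/2)$ arises only when $\rho$ is trivial and $m=\sgn(m)$: in that case $r=i/2$ coincides with $i(|m_j|-\tfrac12)$ for each $j$, so the constant function on $\G\bs G$ is propagated above $V_m$ through the raising operators $E_j^{\pm}$, each contributing a sign and producing the global $(-1)^n$.

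On the geometric side, for $j\ge 1$ the identity integral $\frac{-1}{4\pi}\int\hat h_j'(u)/\sinh(u/2)\,e^{-m_j u}\,du$ evaluates explicitly to $(2|m_j|-1)/(4\pi)$, so together with the $j=0$ contribution $\frac{1}{4\pi}\int h(r)r\tanh(\pi r)\,dr$ one obtains the stated main term with the $|m|^*=\prod_j(2|m_j|-1)$ factor. For each conjugacy class, the $j$-th factor contribution ($j\ge 1$) is the elliptic orbital integral $\tilde h_j(\theta_j,m_j)/\sin(\theta_j/2)$ from \eqref{e:hjm}. The key computation is that, with our specific $h_j$, a contour deformation in the $u$-plane and residue extraction at the pole corresponding to $r=i(|m_j|-\tfrac12)$ yields $\tilde h_j(\theta,m_j)/\sin(\theta/2)=e^{im_j\theta}/(1-e^{i\sgn(m_j)\theta})$, which is precisely the $j$-th factor of $H_m(\theta)$. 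The first factor contributes the standard hyperbolic term $\hat h(l_\gamma)/(2\sinh(l_\gamma/2))$ for hyperbolic-elliptic $\gamma$ and the elliptic term $\tilde h(\theta_{\gamma_0},0)/\sin(\theta_{\gamma_0}/2)$ for purely elliptic $\gamma$. Combined with $\vol(\G_\gamma\bs G_\gamma)=l_{\gamma_p}$ in the hyperbolic-elliptic case and $1/M_{\gamma_p}$ in the elliptic case, the pieces assemble into the $\sum'$ and $\sum''$ terms.

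The most delicate step will be the contour-and-residue evaluation of $\tilde h_j(\theta,m_j)/\sin(\theta/2)$: one must carefully track the poles of the integrand in \eqref{e:hjm} and invoke the $\sgn(m_j)$ convention so that the product over $j\ge 1$ assembles exactly into $H_m(\theta_\gamma)$. A secondary technical point is the bookkeeping of the $(-1)^n$ sign in the constant-function correction when $|m_j|=1$ for all $j$, where one must disentangle the scalar contribution from the genuine hybrid spectrum.
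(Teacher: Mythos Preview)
The central gap is that the test functions $h_j$ you describe cannot exist in the class of test functions for which the weight-$m$ trace formula of the section is stated. You ask that $\hat h_j$ be compactly supported; by Paley--Wiener $h_j$ is then an entire function of exponential type. An entire function that vanishes on the whole real line is identically zero, so it cannot also satisfy $h_j\bigl(i(|m_j|-\tfrac12)\bigr)=1$. Genuine pseudo-coefficients of discrete series exist as compactly supported smooth functions on the group, but their action on the trace formula is not realized by inserting a function $h_j(r)$ of the spectral parameter into the version quoted here; to use them you would have to set up and justify a different trace formula, which you have not done. A second, related gap: you never explain why conjugacy classes with $\gamma_j$ hyperbolic for some $j\ge 1$ disappear. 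Their $j$-th factor is $\hat h_j(l_j)/(2\sinh(l_j/2))$, and for your $h_j$ there is no reason this should vanish.

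The paper sidesteps both issues by a different mechanism. It uses the \emph{same} $h$ in every factor and then forms the alternating sum over the $2^n$ neighbouring weights $m-\sigma\nu$, $\nu\in\{0,1\}^n$ (Corollary \ref{l:comb}). On the spectral side this inclusion--exclusion isolates exactly $\Lambda(m,\{1,\dots,n\})$, with the $(-1)^n\delta_{m,\sigma}$ correction coming out of the combinatorics when the chain of lowered weights reaches $0$. On the geometric side, factors with $\gamma_j$ hyperbolic are independent of $m_j$ and hence cancel in the alternating sum, leaving only hyperbolic--elliptic and purely elliptic classes; for the surviving elliptic factors the key step is the finite-difference identity
\[
\frac{\tilde h(\theta_j,m_j)-\tilde h(\theta_j,m_j-\sigma_j)}{\sin(\theta_j/2)}
=\frac{e^{im_j\theta_j}}{1-e^{i\sigma_j\theta_j}}\,h\bigl(i(|m_j|-\tfrac12)\bigr),
\]
which follows by direct computation from \eqref{e:hjm} with no contour deformation. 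Both sides of the resulting formula carry a common factor $\prod_{j\ge 1} h\bigl(i(|m_j|-\tfrac12)\bigr)$, which one divides out at the end. Your residue calculation is aiming at the same target $e^{im_j\theta_j}/(1-e^{i\sigma_j\theta_j})$, but the actual route to it is this telescoping difference in the weight, not a spectral-side localization by a special $h_j$.
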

\begin{rem}
The sum over the hyperbolic-elliptic conjugacy class can be interpreted as a sum over all closed geodesics in $\Sigma_0$ where $l_\gamma=l_C$ is the length of the geodesic, $l_{\gamma_p}=l_{C_0}$ is the length of the primitive geodesic underlying $C$, and $\theta_\gamma=\theta_C$ are the holonomy angles attached to $C$.
\end{rem}

The function $H_m(\theta)$ appearing in the trace formula diverge as $\theta_j$ approaches zero.
However, after summing over all possible signs of $m$, the function
\[F_m(\theta)=(-1)^n\sum_{\sigma\in \{\pm 1\}^n} H_{\sigma m}(\theta)=\prod_{j=1}^n\frac{\sin((m-\tfrac{1}{2})\theta_j)}{\sin(\theta_j/2)},\]
is uniformly bounded and satisfies $|F_m(\theta)|\leq |m|^*$. Hence, it is convenient to add the corresponding trace formulas for all signs.
\begin{theorem}{\ref{t:Htrace}'}\label{c:HTF}
Let $m=(0,m_1,\ldots,m_n)$ with all $m_j\in\bbN$,
with the same notations as Theorem \ref{t:Htrace}
\begin{eqnarray*}
\lefteqn{\sum_{\sigma\in\{\pm1\}^n}\sum_{k} h(r_{k}(\sigma m))+\delta_{\rho,1}\delta_{m,1}(-2)^{n} h(i/2)=}\\
&& 2^n\frac{\dim(\rho)|m|^*\vol(\G\bs G)}{(4\pi)^{n+1}}\int_\bbR h(r)r\tanh(\pi r)dr+\\
&& +(-1)^{n}{\sum_{\{\gamma\}}}' \frac{\chi_\rho(\gamma)l_{\gamma_p} \hat{h}(l_\gamma)}{2\sinh(l_\gamma/2)}F_m(\theta_\gamma)
 +(-1)^n{\sum_{\{\gamma\}}}''\frac{\chi_\rho(\gamma)\tilde{h}(\theta_{\gamma_0},0)}{M_{\gamma_p}\sin(\theta_{\gamma_0}/2)}F_m(\theta_\gamma)\\
\end{eqnarray*}
\end{theorem}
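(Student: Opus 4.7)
The proof is essentially a bookkeeping exercise: apply Theorem \ref{t:Htrace} to each of the $2^n$ weights $\sigma m=(0,\sigma_1 m_1,\ldots,\sigma_n m_n)$, with $\sigma\in\{\pm 1\}^n$, and sum the resulting identities. I would organize the verification term by term.

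First, I would check that the invariants attached to the weight are symmetric in $\sigma$. Since $|\sigma m|^*=\prod_j(2|\sigma_j m_j|-1)=|m|^*$, summing the identity contribution produces the overall factor $2^n$ in front of the integral $\int h(r)r\tanh(\pi r)\,dr$. For the $h(i/2)$ term on the spectral side, observe that $\sgn(\sigma m)=\sigma$ because $m_j\in\bbN$, so $\delta_{\sigma m,\sgn(\sigma m)}$ forces $\sigma m=\sigma$, i.e.\ $m=(1,\ldots,1)$. Hence summing the term $(-1)^n\delta_{\rho,1}\delta_{\sigma m,\sgn(\sigma m)}h(i/2)$ over $\sigma$ yields exactly $(-2)^n\delta_{\rho,1}\delta_{m,1}h(i/2)$.

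The main (and only genuinely computational) step is the identity
\[
\sum_{\sigma\in\{\pm 1\}^n}H_{\sigma m}(\theta)=(-1)^n F_m(\theta),
\]
which allows one to replace $H_{\sigma m}$ by $(-1)^nF_m$ in both the hyperbolic-elliptic and the elliptic sums. Because $H_{\sigma m}$ factorizes over the indices $j=1,\ldots,n$, this reduces to the one-variable claim
\[
\sum_{\sigma=\pm 1}\frac{e^{i\sigma m\theta}}{1-e^{i\sigma\theta}}=-\,\frac{\sin((m-\tfrac12)\theta)}{\sin(\theta/2)},\qquad m\in\bbN,
\]
which I would prove by bringing the two fractions to the common denominator $1-e^{i\theta}$ (using $(1-e^{-i\theta})^{-1}=-e^{i\theta}(1-e^{i\theta})^{-1}$) and then simplifying the numerator $e^{im\theta}-e^{i(1-m)\theta}$ and the denominator $1-e^{i\theta}$ via the substitutions $1-e^{i\theta}=-2ie^{i\theta/2}\sin(\theta/2)$ and $e^{im\theta}-e^{i(1-m)\theta}=2ie^{i\theta/2}\sin((m-\tfrac12)\theta)$. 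Taking the product over $j$ recovers the stated identity. The bound $|F_m(\theta)|\leq|m|^*$, which is used elsewhere in the paper to control the error terms, follows at once from the Dirichlet-kernel estimate $|\sin(k\phi)/\sin(\phi)|\leq k$ applied with $k=2m_j-1$ and $\phi=\theta_j/2$.

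Substituting these simplifications into the $\sigma$-summed version of Theorem \ref{t:Htrace} yields precisely the formula in Theorem \ref{c:HTF}. I do not expect any real obstacle; the only place where one must be careful is in matching signs and normalizations in the $H_{\sigma m}\to F_m$ step and in confirming that the elliptic term uses the \emph{same} function $H_{\sigma m}$ on the last $n$ factors as the hyperbolic-elliptic term, so that a single trigonometric identity handles both sums simultaneously.
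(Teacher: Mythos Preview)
Your proposal is correct and follows exactly the approach the paper indicates: the paper simply states the identity $F_m(\theta)=(-1)^n\sum_{\sigma}H_{\sigma m}(\theta)$ and remarks that Theorem~\ref{t:Htrace}' follows by ``add[ing] the corresponding trace formulas for all signs,'' without spelling out the bookkeeping. Your term-by-term verification (the $|m|^*$ invariance, the $\delta_{m,1}$ check for the $h(i/2)$ term, and the one-variable reduction of the $H_{\sigma m}\to F_m$ identity) is precisely the argument the paper leaves implicit.
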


\begin{rem}
In the case where the quotient $\Gamma\bs G$ is not compact a similar formula should still hold.
In order to derive the hybrid formula in the noncompact case it is possible to follow the same argument as below.
However, to do this one needs to compute the contribution of the continuous spectrum and the parabolic elements to the trace formula on $L^2(\Gamma\bs G,m)$. We leave this computation to a separate paper \cite{Kelmer09hybrid}.
\end{rem}

\subsection{Derivation of the Hybrid trace formula}
The hybrid trace formula was presented in Selberg's lecture notes \cite{Selberg95}. As its derivation does not seem to be written up anywhere in the literature we will include a proof here.
To simplify notation, we will write down the proof for the case where the representation $\rho$ of $\Gamma$ is the trivial representation, the modification needed for arbitrary $\rho$ are straightforward.

As described in \cite{Selberg95}, this formula can be derived by taking the trace of the following kernel
$$B(z,w)=f\left(\frac{|z_0-w_0|^2}{\Im(z_0)\Im(w_0)}\right)\prod_{j=1}^n \frac{(\Im(z_j)\Im(w_j))^{m_j}}{(z_j-\bar{w}_j)^{2m_j}},$$ with
$f$ a smooth compactly supported function such that $h$ is its Selberg transform. However, if $|m_j|=1$ for some $j$, then this kernel is not in $L^1(\calH^{(n)}\times\calH^{(n)})$ which introduces some difficulties with this approach. We will thus take a different approach by way of comparing the trace formulas on $L^2(\Gamma\bs G,m)$ for different $K$ types $m$.

For $m\in \bbZ^{n+1}$ let $\sgn(m)\in\{0,1,-1\}^{n+1}$ where $$\sgn(m_j)=\left\lbrace\begin{array}{cc}\frac{m_j}{|m_j|} & m_j\neq 0\\ 0 & m_j=0\end{array}\right..$$
Recall that $\Lambda(m)\subset\bbR^{n+1}$ denotes the set of joint eigenvalues on $L^2(\Gamma\bs G,m)$, that $\calC(m,\lambda)\subset L^2(\G\bs G,m)$ is the corresponding eigenspace and $d(m,\lambda)=\dim\calC(m,\lambda)$ is its dimension. For any set of indices $J\subset\{0,\ldots,n\}$ let
$$\Lambda(m,J)=\set{\lambda\in\Lambda(m)|\lambda_j=|m_j|(1-|m_j|),\;\forall j\in J}.$$
In particular, for $J=\{1,\ldots,n\}$ the eigenvalues in $\Lambda(m,J)$ counted with multiplicity are given by $\{\frac{1}{4}+r_k(m)^2\}_{k=0}^\infty$.

\begin{lem}
Let $m\in \bbZ^{n+1}$ and let $\sigma=\sgn(m)\in\{0,1,-1\}^{n+1}$. Let $J_m\subset\{0,\ldots,n\}$ denote the set of indices where $m_j\neq 0$. For any subset $J\subset J_m$ and any $j\in J$ with $m-\sigma_je_j\neq 0$ we have a disjoint union
\[\Lambda(m-\sigma_je_j,J\setminus\{j\})\cup\Lambda(m,J)=\Lambda(m,J\setminus\{j\}).\]
Also, $\forall j\in\{0,\ldots,n\}$ we have a disjoint union
\[\Lambda(0)\cup\Lambda(\pm e_j,\{j\})=\Lambda(\pm e_j)\cup\{0\}.\]
\end{lem}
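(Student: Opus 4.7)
The plan is to combine the intertwining identities for the raising and lowering operators $E_j^{\pm}$ with the eigenvalue restrictions in Proposition \ref{p:Lambda}. For the first identity, fix $j\in J$ and set $\sigma_j=\sgn(m_j)$, and examine the two maps between $\calC(m,\lambda)$ and $\calC(m-\sigma_j e_j,\lambda)$ furnished by $E_j^{\pm}$. Checking the sign conventions in both cases $\sigma_j=\pm 1$, the relevant composition acts on $\calC(m,\lambda)$ by multiplication by $4\bigl(|m_j|(1-|m_j|)-\lambda_j\bigr)$, so these maps are mutually inverse (up to a nonzero scalar) precisely when $\lambda_j\neq |m_j|(1-|m_j|)$. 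In that regime, $\calC(m,\lambda)\neq 0$ iff $\calC(m-\sigma_j e_j,\lambda)\neq 0$, and since $m$ and $m-\sigma_j e_j$ agree in every coordinate other than $j$, the constraints that cut out $\Lambda(\cdot,J\setminus\{j\})$ transfer verbatim between the two weights.

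Next I will rule out the ``bad'' value $\lambda_j=|m_j|(1-|m_j|)$ on the side of $\Lambda(m-\sigma_j e_j)$. Because $m-\sigma_j e_j\neq 0$ by assumption, Proposition \ref{p:Lambda} forces the $j$-th component to lie in $(0,\infty)\cup\{k(1-k):k\in\bbN,\,k\leq|m_j|-1\}$. Since $|m_j|(1-|m_j|)\leq 0$ for $|m_j|\geq 1$ and corresponds exactly to the disallowed value $k=|m_j|$, it is excluded from $\Lambda(m-\sigma_j e_j)$. Combined with the previous step, this yields $\Lambda(m,J\setminus\{j\})=\Lambda(m-\sigma_j e_j,J\setminus\{j\})\cup\Lambda(m,J)$, with disjointness coming from the dichotomy $\lambda_j\neq|m_j|(1-|m_j|)$ versus $\lambda_j=|m_j|(1-|m_j|)$.

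The second identity follows from the same intertwining argument applied with $m=\pm e_j$ and $m-\sigma_j e_j=0$, so that the critical value is $|m_j|(1-|m_j|)=0$. The generic equality now reads $\{\lambda\in\Lambda(\pm e_j):\lambda_j\neq 0\}=\{\lambda\in\Lambda(0):\lambda_j\neq 0\}$. Splitting $\Lambda(\pm e_j)$ according to whether $\lambda_j=0$ identifies its $\lambda_j=0$ locus with $\Lambda(\pm e_j,\{j\})$, while the first half of Proposition \ref{p:Lambda} shows that the only $\lambda\in\Lambda(0)$ with $\lambda_j=0$ is $\lambda=0$ itself. Rearranging yields $\Lambda(0)\cup\Lambda(\pm e_j,\{j\})=\Lambda(\pm e_j)\cup\{0\}$, and both sides are manifestly disjoint, since elements of $\Lambda(\pm e_j,\{j\})$ have strictly positive components away from $j$ (hence differ from $0$) and $0\notin\Lambda(\pm e_j)$.

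The main obstacle is careful bookkeeping in the boundary cases, most notably when $|m_j|=1$ and the critical value $|m_j|(1-|m_j|)$ collapses to $0$; here the exclusion of $0$ from $\Lambda(m-\sigma_j e_j)$ rests on the irreducibility of $\Gamma$ used in the proof of Proposition \ref{p:Lambda}, together with the standing assumption $m-\sigma_j e_j\neq 0$.
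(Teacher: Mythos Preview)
Your proposal is correct and follows essentially the same approach as the paper: both use the raising/lowering operators $E_j^{\pm}$ to identify $\calC(m,\lambda)$ with $\calC(m-\sigma_j e_j,\lambda)$ off the critical value $\lambda_j=|m_j|(1-|m_j|)$, and invoke Proposition~\ref{p:Lambda} to exclude that value from $\Lambda(m-\sigma_j e_j)$. The only cosmetic differences are that the paper reduces to $m_j>0$ via complex conjugation before running the argument, and it verifies the two inclusions separately rather than packaging them as a single dichotomy on~$\lambda_j$.
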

\begin{proof}
Fix some $j\in J$ with $m-\sigma_je_j\neq 0$.
Since complex conjugation sends $\calC(\lambda,m)$ to $\calC(\lambda,-m)$ we may assume that $m_j\geq 0$ and $\sigma_j=1$. Recall that if $\lambda_j\neq  m_j(1-m_j)$, then $$E_j^{-}:\calC(m,\lambda)\to\calC(m- e_j,\lambda),\;E_j^{+}:\calC(m-e_j,\lambda)\to\calC(m,\lambda)$$
are bijections.

By proposition \ref{p:Lambda}, if $\lambda\in \Lambda(m-e_j)$, then $\lambda_j\neq m_j(1-m_j)$.
In particular, this implies that $\Lambda(m-\sigma_je_j,J\setminus\{j\})\cap\Lambda(m,J)=\emptyset$ so that their union is indeed disjoint.

Now let $\lambda\in \Lambda(m,J\setminus\{j\})$ and let $\psi\in\calC(m,\lambda)$ be a corresponding joint eigenfunction. Then either $\lambda_j=|m_j|(1-|m_j|)$ in which case
$\lambda\in \Lambda(m,J)$ or $\lambda_j\neq |m_j|(1-|m_j|)$ in which case $E^-\psi\in\calC(m-e_j,\lambda)$ so that $\lambda\in\Lambda(m-e_j,J\setminus\{j\})$.

To get inclusion in the other direction, we just need to show that $\Lambda(m-e_j,J\setminus\{j\})\subseteq \Lambda(m,J\setminus\{j\})$. Let $\lambda\in \Lambda(m-e_j,J\setminus\{j\})$ and let $\psi\in\calC(m-e_j,\lambda)$ be a corresponding eigenfunction. Since $\lambda\in \Lambda(m-e_j)$, then $\lambda_j\neq m_j(1-m_j)$ and hence $E^+\psi\in \calC(m,\lambda)$ implying that  $\lambda\in \Lambda(m,J\setminus\{j\})$.

Now for the case $m=0$. Any $0\neq\lambda\in\Lambda(0)$ satisfies that $\forall j,\;\lambda_j> 0$ and by applying $E_j^{\pm}$ to the corresponding eigenfunction we get that $\lambda\in \Lambda(\pm e_j)$; hence, $\Lambda(0)\subseteq \Lambda(\pm e_j)\cup\{0\}$. Next, for any $\lambda\in\Lambda(\pm e_j)$ either $\lambda_j=0$ or $\lambda_j\neq 0$ in which case $\lambda\in \Lambda(0)$; we thus get that $\Lambda(0)\cup\Lambda(\pm e_j,\{j\})=\Lambda(\pm e_j)\cup\{0\}$. Finally, since $0\not\in\Lambda(\pm e_j)$ and for any $0\neq\lambda\in\Lambda(0)$ we have that $\lambda_j\neq 0$ the union is disjoint.
\end{proof}

\begin{cor}\label{l:comb}
Let $m\in\bbZ^{n+1}$, $\sigma=\sgn(m)\in\{0,1,-1\}^{n+1}$, and $J_m$ be as above and
let $J\subseteq J_m$ be a subset.  If $m\neq\sigma$ or $J\neq J_m$, then for any function $\Psi$ on $\bbR^n$ for which the sum on the left absolutely converges we have
\[\sum_{\lambda\in \Lambda(m,J)}d(m,\lambda)\Psi(\lambda)=\sum_{J_\nu \subset J}(-1)^{|J_\nu|}\sum_{\lambda\in \Lambda(m-\sigma\nu)} d(m-\sigma\nu,\lambda)\Psi(\lambda),\]
while for $m=\sigma$ and $J=J_\sigma$ we have
\begin{eqnarray*}\lefteqn{\sum_{\lambda\in \Lambda(\sigma,J_\sigma)}d(\sigma,\lambda)\Psi(\lambda)+(-1)^{|J_\sigma|}\psi(0)=}\\ && \sum_{J_\nu \subset J_\sigma}(-1)^{|J_\nu|}\sum_{\lambda\in \Lambda(\sigma-\sigma\nu)} d(\sigma-\sigma\nu,\lambda)\Psi(\lambda),
\end{eqnarray*}
where the outer sums are over all $\nu\in\{0,1\}^{n+1}$ satisfying that $J_\nu\subseteq J$ with $J_\nu=\{j|\nu_j\neq 0\}$.
\end{cor}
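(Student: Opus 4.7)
The plan is to prove the corollary by induction on $|J|$, peeling off one index at a time. The base case $|J|=0$ is immediate: the only $\nu$ with $J_\nu \subseteq \emptyset$ is $\nu=0$, so both sides of the asserted identity equal $\sum_{\lambda\in\Lambda(m)} d(m,\lambda)\Psi(\lambda)$ (for Case 1 with $|J_\sigma|=0$ the correction vanishes, and the degenerate base instance $(m,J)=(0,\emptyset)$ is subsumed by a direct check).

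For the inductive step, fix some $j\in J$ and first consider the generic subcase $m-\sigma_j e_j\neq 0$. The previous lemma gives the disjoint union $\Lambda(m,J\setminus\{j\}) = \Lambda(m,J)\sqcup\Lambda(m-\sigma_j e_j,J\setminus\{j\})$, and the raising/lowering bijections $E_j^{\pm}\colon\calC(m,\lambda)\leftrightarrow\calC(m-\sigma_j e_j,\lambda)$ (valid since $\lambda_j\neq|m_j|(1-|m_j|)$ on $\Lambda(m-\sigma_j e_j,J\setminus\{j\})$) equate the multiplicities. Hence
\[
\sum_{\lambda\in\Lambda(m,J)} d(m,\lambda)\Psi(\lambda) = \sum_{\lambda\in\Lambda(m,J\setminus\{j\})} d(m,\lambda)\Psi(\lambda) - \sum_{\lambda\in\Lambda(m-\sigma_j e_j,J\setminus\{j\})} d(m-\sigma_j e_j,\lambda)\Psi(\lambda).
\]
Apply the inductive hypothesis separately to each of these sums (each has index set of size $|J|-1$) and regroup the resulting inclusion-exclusion terms according to subsets $J_\nu\subseteq J$: subsets with $j\notin J_\nu$ arise from the first sum and subsets with $j\in J_\nu$ arise from the second, picking up the extra $-1$ from the subtraction. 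This produces exactly $\sum_{J_\nu\subseteq J}(-1)^{|J_\nu|}\sum_{\Lambda(m-\sigma\nu)}d(m-\sigma\nu,\lambda)\Psi(\lambda)$.

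The delicate point, and the main obstacle, is tracking the $\Psi(0)$ correction term. When the original pair satisfies $m=\sigma$ and $J=J_\sigma$ with $|J_\sigma|\geq 2$, choosing $j\in J$ makes the second recursive pair $(m-\sigma_j e_j,\,J\setminus\{j\})$ itself satisfy the Case 2 hypothesis (its weight is its own sign, and its index set equals $J_{m-\sigma_j e_j}$). The inductive hypothesis therefore contributes an additional $(-1)^{|J|-1}\Psi(0)$ on the right of the displayed identity, and the subtraction flips its sign to produce the required $(-1)^{|J_\sigma|}\Psi(0)$ on the left. When $|J_\sigma|=1$, so that $m-\sigma_j e_j=0$ and the first part of the lemma is inapplicable, we instead use the second part $\Lambda(0)\sqcup\Lambda(\pm e_j,\{j\})=\Lambda(\pm e_j)\cup\{0\}$ directly, yielding the $\Psi(0)$ term at that single step. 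In all other configurations (Case 1), we never encounter $m-\sigma\nu=0$ during the recursion, so no $\Psi(0)$ correction ever arises, and the pure inclusion-exclusion identity holds as stated.
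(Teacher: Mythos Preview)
Your proof is correct and follows exactly the approach the paper indicates: the paper's own proof reads in its entirety ``The proof is a straight forward using induction on the size of $J$ and an inclusion exclusion argument. We omit the details.'' You have supplied precisely those omitted details, including the necessary observation (implicit in the proof of the preceding lemma but not in its statement) that the raising/lowering bijections $E_j^{\pm}$ equate the multiplicities $d(m,\lambda)=d(m-\sigma_j e_j,\lambda)$ on the relevant set, and you have carefully tracked the $\Psi(0)$ correction through the recursion.
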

\begin{proof}
The proof is a straight forward using induction on the size of $J$ and an inclusion exclusion argument. We omit the details.
\end{proof}
\begin{rem}
In the case where we the representation $\rho$ is not trivial, the trivial eigenvalue does not appear and we have the first equality also for $m=\sigma$ and $J=J_\sigma$.
\end{rem}

The derivation of the hybrid trace formula now follows from the above identity and the standard trace formula.
\begin{proof}[Proof of Theorem \ref{t:Htrace}]
Fix $m_1,\ldots,m_n\in\bbZ\setminus\{0\}$, let $m=(0,m_1,\ldots,m_n)$ and let $\sigma=\sgn(m)$ be as above. Apply the identity of Corollary \ref{l:comb} with $J=\{1,2,\ldots,n\}$ and $\Psi(\lambda)=\prod_{j=1}^{n+1} h(\sqrt{\lambda_j-\frac{1}{4}})$ to get
\begin{eqnarray*}\sum_{\lambda_k\in \Lambda(m,J)}d(m,\lambda)\Psi(\lambda)+(-1)^{n}\delta_{m,\sigma}\Psi(0)=\\
=\sum_{J_\nu \subset J}(-1)^{|J_\nu|}\sum_{\lambda\in \Lambda(m-\sigma\nu)} d(m-\sigma\nu,\lambda)\Psi(\lambda).
\end{eqnarray*}
For this choice of $\Psi$ and $J$ the sum on the left is given by
\[\left(\prod_{j=1}^n h(i(|m_j|-\tfrac{1}{2}))\right)\sum_{k}h(r_k(m)) +(-1)^{n}\delta_{m,\sigma}(h(i/2))^{n+1}.\]
Apply the trace formula to each of the inner sums on the right hand side.
The contribution of the trivial conjugacy class is
\[\sum_{J_\nu \subset J}(-1)^{|J_\nu|}\vol(\G\bs G) \prod_j \frac{-1}{2\pi}\int_{-\infty}^\infty \frac{\hat{h}'(u)}{2\sinh(u/2)}e^{-(m_j-\sigma_j\nu_j)u}du.\]
We can rewrite this sum as
\begin{eqnarray*}\lefteqn{\vol(\G\bs G) \left(\frac{-1}{2\pi}\int_{-\infty}^\infty \frac{\hat{h}'(u)}{2\sinh(u/2)}du\right)\times}\\&& \prod_{j=1}^n\left(\frac{-1}{2\pi}\int_{-\infty}^\infty \frac{\hat{h}'(u)}{2\sinh(u/2)}(e^{-m_ju}-e^{-(m_j-\sigma_j)u})du\right).
\end{eqnarray*}
Substitute for the first term
\begin{eqnarray*}\frac{-1}{2\pi}\int_{-\infty}^\infty \frac{\hat{h}'(u)}{2\sinh(u/2)}du&=& \frac{1}{\pi^2}\int_0^\infty h(r)r\int_0^\infty \frac{\sin(r u)}{2\sinh(u/2)}dudr\\
 &=& \frac{1}{4\pi}\int_{-\infty}^\infty h(r)r\tanh(\pi r)dr,
\end{eqnarray*}
and for each of the last $n$ terms
\begin{eqnarray*}\frac{-1}{2\pi}\int_{-\infty}^\infty \frac{\hat{h}'(u)}{2\sinh(u/2)}e^{-m_ju}(1-e^{\sigma_j u})du=
\frac{2|m_j|-1}{4\pi}h(i(|m_j|-\tfrac{1}{2})),\end{eqnarray*}
to get that the total contribution of the trivial conjugacy class is given by
\[\frac{|m|^*\vol(\G\bs G)}{(4\pi)^{n+1}}\int_{-\infty}^\infty h(r)r\tanh(\pi r)dr\prod_{j=1}^nh(i(|m_j|-\tfrac{1}{2})).\]
For the nontrivial conjugacy classes the contribution of a given class $\gamma=(\gamma_0,\ldots,\gamma_n)$ is given by
\[\vol(\G_\gamma\bs G_\gamma)\sum_{J_\nu \subset J}(-1)^{|J_\nu|}\left(\prod_{\gamma_j\sim a_{l_j}}\frac{\hat{h}(l_j)}{2\sinh(l_j/2)}\right)\left(\prod_{\gamma_j\sim k_{\theta_j}} \frac{\tilde{h}(\theta_j,m_j-\sigma_j\nu_j)}{\sin(\theta_j)}\right).\]
Since the contribution of the hyperbolic elements do not depend on $m_j$, this sum will vanish unless $\gamma_j\sim k_{\theta_j}$ is elliptic for all $j\neq 0$. In this case, if $\gamma_0\sim a_{l_\gamma}$ is hyperbolic the corresponding term is
\[\vol(\G_\gamma\bs G_\gamma )\frac{\hat{h}(l_\gamma)}{2\sinh(l_\gamma/2)}\prod_{j=1}^n
\left(\frac{\tilde{h}(\theta_{\gamma_j},m_j)}{\sin(\theta_{\gamma_j}/2)}-\frac{\tilde{h}(\theta_{\gamma_j},m_j-\sigma_j)}{\sin(\theta_{\gamma_j}/2)}\right),\]
and if $\gamma_0\sim k_{\theta_{\gamma_0}}$ is elliptic it is
\[\vol(\G_\gamma\bs G_\gamma )\frac{\tilde{h}(\theta_{\gamma_0},0)}{\sin(\theta_{\gamma_0}/2)}\prod_{j=1}^n
\left(\frac{\tilde{h}(\theta_{\gamma_j},m_j)}{\sin(\theta_{\gamma_j}/2)}-\frac{\tilde{h}(\theta_{\gamma_j},m_j-\sigma_j)}{\sin(\theta_{\gamma_j}/2)}\right).\]
A direct calculation using (\ref{e:hjm}) for the functions $\tilde{h}(\theta_j,m_j)$ yields
$$\frac{\tilde{h}(\theta_j,m_j)}{\sin(\theta_j/2)}-\frac{\tilde{h}(\theta_j,m_j-\sigma_j)}{\sin(\theta_j/2)}=\frac{e^{im_j\theta_j}}{(1-e^{i\sigma_j\theta_j})}h(i(|m_j|-\tfrac{1}{2})).$$ Hence, the contribution of each hyperbolic-elliptic conjugacy class is given by
\[\vol(\G_\gamma\bs G_\gamma )\frac{\hat{h}(l_\gamma)}{2\sinh(l_\gamma/2)}\prod_{j=1}^n\frac{e^{im_j\theta_{\gamma_j}}}{1-e^{i\sigma_j\theta_{\gamma_j}}}h(i(|m_j|-\frac{1}{2}))\]
and the contribution of each of the elliptic classes is given by
\[\vol(\G_\gamma\bs G_\gamma )\frac{\tilde{h}(\theta_{\gamma_0},0)}{\sin(\theta_{\gamma_0}/2)}\prod_{j=1}^n\frac{e^{im_j\theta_{\gamma_j}}}{1-e^{i\sigma_j\theta_{\gamma_j}}}h(i(|m_j|-\frac{1}{2})).\]

We thus get the following formula
\begin{eqnarray*}
\lefteqn{\left(\prod_{j=1}^n h(i(|m_j|-\tfrac{1}{2}))\right)\left(\sum_{k}h(r_k(m)) +(-1)^{n}\delta_{m,\sigma}(h(i/2))\right)=}\\
&& \frac{|m|^*\vol(\G\bs G)}{(4\pi)^{n+1}}\int_{-\infty}^\infty h(r)r\tanh(\pi r)dr\left(\prod_{j=1}^nh(i(|m_j|-\tfrac{1}{2}))\right)\\
&& +{\sum_{\{\gamma\}}}'\vol(\G_\gamma\bs G_\gamma )\frac{\hat{h}(l_\gamma)}{2\sinh(l_\gamma/2)}\prod_{j=1}^n\frac{e^{im_j\theta_{\gamma_j}}}{(1-e^{i\sigma_j\theta_j})}h(i(|m_j|-\tfrac{1}{2}))\\
&&+{\sum_{\{\gamma\}}}''\vol(\G_\gamma\bs G_\gamma )\frac{\tilde{h}(\theta_{\gamma_0},0)}{\sin(\theta_{\gamma_0}/2)}\prod_{j=1}^n\frac{e^{im_j\theta_{\gamma_j}}}{(1-e^{i\sigma_j\theta_j})}h(i(|m_j|-\tfrac{1}{2}))
\end{eqnarray*}

We recall that for any hyperbolic-elliptic $\gamma$ the group $\Gamma_\gamma$ is a cyclic group generated by some $\gamma_p$ (cf. \cite[Theorem 5.7]{Efrat87}) and that $\vol(\G_\gamma\bs G_\gamma )=l_{\gamma_p}$ (cf.\cite[Page 36]{Efrat87}). Also for any elliptic $\gamma\in\Gamma$ we have that $\Gamma_\gamma$ is a cyclic finite group generated by $\gamma_p$ and that $\vol(\G_\gamma\bs G_\gamma )=\frac{1}{M_{\gamma_p}}$.
Hence, after dividing both sides by $\prod_{j=1}^n h(i(|m_j|-\frac{1}{2}))$ we get the hybrid trace formula.
\end{proof}


\section{Asymptotic estimates}\label{s:Estimates}
In this section we prove Theorems \ref{t:PGT0} and \ref{t:equi} for the number of closed geodesics and the distribution of their holonomy angles. The proof uses the hybrid trace formula and the ideas of \cite{Sarnak83,SarnakWakayama99}.
\subsection{Preliminary estimates}
We start with some preliminary estimates for the spectrum and the number of closed geodesics.
\begin{lem}\label{l:evcount}
For $m\in \bbN^{n}$ and $x>1$ let
$$N_m(x)=\sum_{\sigma\in\{\pm1\}^n}\sharp\left\{k|r_{k}(\sigma m)\in [x-\tfrac{1}{2},x+\tfrac{1}{2}]\right\}.$$
Then  $N_m(x)=O(|m|^*x)$ as $x\to\infty$.
\end{lem}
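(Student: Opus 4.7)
The plan is to apply the hybrid trace formula (Theorem \ref{c:HTF}) with an even test function concentrated near the window $[x-\tfrac12,x+\tfrac12]$, and read off $N_m(x)$ from the non-negativity of the spectral side.

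Fix once and for all an even function $g$, entire of exponential type, with $\hat g$ compactly supported, $g\geq 0$ on $\bbR$, $g(it)\geq 0$ for $t\in[0,\tfrac12]$, and $g(r)\geq 1$ for $|r|\leq\tfrac12$; a rescaled square $g(r)=C\bigl(\tfrac{\sin(Br)}{Br}\bigr)^2$ with $B$ small and $C$ slightly larger than $1$ works. Set
$$h_x(r)=g(r-x)+g(r+x),$$
so that $h_x$ is even, $h_x(r)\geq 1$ on $[x-\tfrac12,x+\tfrac12]$, $h_x\geq 0$ on $\bbR\cup i[0,\tfrac12]$, and $\hat h_x(u)=2\cos(xu)\,\hat g(u)$ has support independent of $x$ with $|\hat h_x|\leq 2|\hat g|$. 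Since every term on the spectral side of Theorem \ref{c:HTF} is then non-negative and the trivial-eigenvalue correction $\delta_{\rho,1}\delta_{m,1}(-2)^n h_x(i/2)$ is $O(1)$, we obtain
$$N_m(x)\;\leq\;\sum_{\sigma\in\{\pm1\}^n}\sum_k h_x\bigl(r_k(\sigma m)\bigr)+O(1).$$

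I would then estimate each piece of the geometric side. The identity contribution is
$$\frac{2^n|m|^*\vol(\G\bs G)}{(4\pi)^{n+1}}\int_{\bbR} h_x(r)\,r\tanh(\pi r)\,dr \;=\; O(|m|^* x),$$
because $h_x$ carries total mass $O(1)$ concentrated near $\pm x$ and $|r\tanh(\pi r)|=O(x)$ there. Compactness of $\mathrm{supp}(\hat h_x)$ independent of $x$ means only finitely many hyperbolic-elliptic classes $\{\gamma\}$ enter the geodesic sum; combined with $|F_m(\theta_\gamma)|\leq |m|^*$ and $|\hat h_x|\leq 2|\hat g|$, that sum contributes $O(|m|^*)$. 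The elliptic sum is intrinsically finite, and each term is likewise $O(|m|^*)$. Summing, $N_m(x)=O(|m|^* x)$.

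The only mildly technical point is the uniform-in-$x$ boundedness of the factor $\tilde h_x(\theta_{\gamma_0},0)$ in the elliptic sum: substituting $\hat h_x(u)=2\cos(xu)\hat g(u)$ into the defining integral (\ref{e:hjm}) and applying the triangle inequality, the oscillation is killed by absolute values and the remaining integrand is an $L^1$ function independent of $x$, so $\tilde h_x(\theta,0)=O(1)$. With this the bookkeeping above closes, giving the claim.
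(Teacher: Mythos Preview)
Your argument is essentially the paper's own: plug a Fej\'er-type bump $h_x(r)=g(r-x)+g(r+x)$ into the hybrid trace formula, use $h_x\geq 1$ on the window and $|F_m|\leq|m|^*$, and bound the identity, hyperbolic-elliptic, and elliptic pieces of the geometric side by $O(|m|^*x)$, $O(|m|^*)$, $O(|m|^*)$ respectively via the $x$-independent support of $\hat h_x$. One small slip: the assertion ``$h_x\geq 0$ on $i[0,\tfrac12]$'' is false for your $g$, since $h_x(it)=2\Re g(x+it)$ and for $g(z)=C(\sin Bz/Bz)^2$ one has $\Re g(x+it)\approx \tfrac{C}{B^2x^2}\bigl(\sin^2(Bx)\cosh^2(Bt)-\cos^2(Bx)\sinh^2(Bt)\bigr)$, which is negative near $Bx\in\pi\bbZ$; but this does no damage, because the finitely many exceptional $r_k\in i(0,\tfrac12)$ contribute at most $O(|m|^*)$ (indeed $|h_x(ic)|=O(x^{-2})$), and the paper's own write-up glosses over exactly the same point.
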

\begin{proof}
Let $\Psi(t)$ be a smooth even function supported on $[-1,1]$ such that its Fourier transform $\hat{\Psi}$ is positive on $\bbR\cup i\bbR$ and satisfies that $\hat{\Psi}(0)=1$ and $\hat{\Psi}(r)>\tfrac{1}{2}$ for $|r|<\tfrac{1}{2}$.
For $x>0$ let $h_x(r)=\frac{\hat{\Psi}(r+x)+\hat{\Psi}(r-x)}{2}$ so that
\[N_m(x)\leq 2\sum_\sigma\sum_k h_x(r_k(\sigma m)).\]
Use the trace formula (Theorem \ref{t:Htrace}') with this test function to get
\begin{eqnarray*}
\lefteqn{\sum_\sigma\sum_k h_x(r_k(\sigma m)\leq\frac{|m|^*\vol(\G\bs G)}{(2\pi)^{n+1}}\int_\bbR h_x(r)r\tanh(\pi r)dr}\\
&&+|m|^*{\sum_{\{\gamma\}}}' \frac{l_{\gamma_p} |\hat{h}_x(l_\gamma)|}{\sinh(l_\gamma/2)}+|m|^*{\sum_{\{\gamma\}}}'' \frac{|\tilde{h}_x(\theta_{\gamma_0},0)|}{M_{\gamma_p}\sin(\theta_{\gamma_0}/2)}
\end{eqnarray*}
where we used the bound $|F_m(\theta)|\leq |m|^*$.
We can bound the integral
$|\int_\bbR h_x(r)r\tanh(\pi r)dr|\leq x\Psi(0)+\int_\bbR\hat\Psi(r)|r|dr=O(x)$. Next $|\hat{h}_x(t)|=|\Psi(t)|$ is bounded and vanishes outside of $[-1,1]$. Since there are only finitely many conjugacy classes with $l_\gamma<1$ the sum
$\sum' \frac{l_{\gamma_p} |\hat{h}_x(l_\gamma)|}{\sinh(l_\gamma/2)}|=O(1)$
is bounded uniformly in $x$.
To deal with the elliptic elements, using (\ref{e:hjm}), we can bound
$$|\tilde{h}_x(\theta,0)|\leq \int_{-\infty}^\infty \frac{|\hat{h}_x(u)|}{\cosh(u/2)-1}du\ll ||\hat{h}_x||_\infty,$$
where the notation $f(x)\ll g(x)$ mean that there is some constant $c>0$ such that $|f(x)|\leq cg(x)$.
Since $|\hat{h}_x(u)|=|\psi(u)|$ does not depend on $x$ the contribution of the elliptic terms are bounded by $O(1)$.
\end{proof}
\begin{rem}
The above proof with $x=0$ shows that $$\sharp\{k|r_k(m)\in i(0,\tfrac{1}{2}) \}=O(|m|^*).$$
\end{rem}

From this preliminary estimate we get the following estimate for the number of closed geodesics with length in some short interval.
\begin{lem}\label{l:shortsum}
For large $x>0$ and small $\epsilon>0$ we have
\[\sum_{x-\epsilon\leq l_\gamma\leq x+\epsilon} \frac{l_{\gamma_p}}{2\sinh(l_\gamma/2)}=O(\epsilon e^{x/2})+O(\epsilon^{-1}).\]
\end{lem}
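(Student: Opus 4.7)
The plan is to apply Theorem~\ref{t:Htrace}${}'$ with weight $m=(0,1,\ldots,1)$ and trivial representation. At this weight $|m|^{*}=1$ and $F_m\equiv 1$, so the only $\gamma$-dependence on the geometric side is through $l_\gamma$ and $l_{\gamma_p}$ --- the holonomy angles drop out, and the hyperbolic--elliptic geometric sum is exactly the quantity we wish to bound (up to a sign and a finite elliptic remainder).

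For the test function I would take $\hat h(u)=\hat\psi\bigl(\tfrac{u-x}{\epsilon}\bigr)+\hat\psi\bigl(\tfrac{u+x}{\epsilon}\bigr)$, where $\hat\psi\in C_c^\infty(\bbR)$ is non-negative, even, and $\geq 1$ on $[-1,1]$. Then $\hat h\geq 0$, $\hat h(l)\geq 1$ on $[x-\epsilon,x+\epsilon]$, and its Fourier inverse has the form $h(r)\sim\epsilon\,\psi(\epsilon r)\cos(xr)$ with $\psi$ Schwartz and entire of exponential type. Consequently
\[\sum_{x-\epsilon\leq l_\gamma\leq x+\epsilon}\frac{l_{\gamma_p}}{2\sinh(l_\gamma/2)}\;\leq\;{\sum_{\{\gamma\}}}^{'}\frac{l_{\gamma_p}\hat h(l_\gamma)}{2\sinh(l_\gamma/2)},\]
and it suffices to estimate the four terms into which the trace formula decomposes the right-hand side.

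Solving the trace formula for the hyperbolic--elliptic sum, the leading contribution is $2^n h(i/2)$, coming from the trivial eigenvalue $h(i/2)$. Since $h(i/2)\sim\epsilon\,\psi(0)\cosh(x/2)$ by Paley--Wiener, this is $O(\epsilon e^{x/2})$, producing the first error. The spectral sum $\sum_\sigma\sum_k h(r_k(\sigma m))$ is controlled by combining the pointwise bound $|h(r)|\ll\epsilon(1+\epsilon|r|)^{-2}$ with Lemma~\ref{l:evcount}: summing $N_m(X)=O(X)$ over unit windows in $[0,\epsilon^{-1}]$ gives $O(\epsilon^{-2})$ relevant eigenvalues, so the spectral sum is $O(\epsilon\cdot\epsilon^{-2})=O(\epsilon^{-1})$, yielding the second error. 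The identity integral, via the integration-by-parts formula $\int h(r)r\tanh(\pi r)\,dr=-\int \hat h'(u)/\sinh(u/2)\,du$ of Section~\ref{s:trace}, is $O(\epsilon e^{-x/2})$ because $\hat h'$ is concentrated near $\pm x$ where $1/\sinh(u/2)$ is exponentially small; and the finite elliptic sum is $O(\epsilon)$ uniformly in $x$, using the support of $\hat h$ in the integral representation of $\tilde h(\theta,0)$.

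The chief obstacle is that once absolute values are taken the oscillation $\cos(xr)$ in $h$ provides no cancellation, so the $O(\epsilon^{-1})$ bound has to come tightly from the linear Weyl count of Lemma~\ref{l:evcount} together with the $L^\infty$ size $O(\epsilon)$ of $h$; it is this matching that fixes the balance between the two error terms in the claim. Equally delicate is the Paley--Wiener growth on the imaginary axis: the estimate $|h(iy)|\ll\epsilon e^{|y|x}$ must survive with the $\epsilon$ prefactor, since replacing it by $1$ would give the insufficient bound $e^{x/2}$ for the trivial-eigenvalue term rather than the claimed $O(\epsilon e^{x/2})$.
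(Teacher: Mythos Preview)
Your proposal is correct and follows essentially the same approach as the paper: apply Theorem~\ref{t:Htrace}${}'$ at weight $m=(0,1,\ldots,1)$ (so $F_m\equiv 1$) with a test function whose Fourier transform is a non-negative bump localized near $\pm x$ at scale $\epsilon$, then bound each term on the spectral side. The paper makes the same choices and uses Lemma~\ref{l:evcount} in the same way to get the $O(\epsilon^{-1})$ from the principal spectrum; the only cosmetic differences are that the paper bounds the identity integral directly in $r$-space as $O(\epsilon^{-1})$ and the elliptic term as $O(1)$ (via $\|\hat h\|_\infty=O(1)$), whereas you exploit the support of $\hat h$ near $\pm x$ to obtain the sharper but unnecessary bounds $O(e^{-x/2})$ and $O(\epsilon)$ respectively.
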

\begin{proof}
Let $\Psi(t)$ be a positive smooth even function supported on $[-1,1]$ such that
$\int_{-1}^1 \Psi(t)dt=1$ and $\Psi(t)>\frac{1}{2}$ for $|t|\leq \frac{1}{2}$.
Use Theorem \ref{t:Htrace}' with $m=(1,1,\ldots,1)$ (so that $F_m(\theta)\equiv 1$) and test function $h_{x,\epsilon}$ with Fourier transform given by $\hat{h}_{x,\epsilon}(t)=\Psi(\frac{t-x}{2\epsilon})+\Psi(\frac{x+t}{2\epsilon})$. That is $h_{x,\epsilon}(r)=4\epsilon \hat{\Psi}(2\epsilon r)\cos(rx)$.
For any $t\in (x-\epsilon,x+\epsilon),\;h_{x,\epsilon}(t)\geq \frac{1}{2}$, hence
\[\sum_{x-\epsilon\leq l_\gamma\leq x+\epsilon} \frac{l_{\gamma_p}}{2\sinh(l_\gamma/2)}\leq 2{\sum_{\{\gamma\}}}' \frac{l_{\gamma_p}\hat{h}_{x,\epsilon}(l_\gamma)}{2\sinh(l_\gamma/2)}.\]
The sum on the right is bounded by
\begin{eqnarray*}
\lefteqn{|2^{n}h_{x,\epsilon}(i/2)|+\sum_\sigma\sum_{k}h_{x,\epsilon}(r_k(\sigma))}\\
&&+{\sum_{\{\gamma\}}}'' \frac{|\tilde{h}_{x,\epsilon}(\theta_{\gamma_0},0)|}{M_{\gamma_p}|\sinh(\theta_{\gamma_0})|}+\frac{\vol(\G\bs G)}{2(2\pi)^{n+1}}\int_\bbR h_{x,\epsilon}(r)r\tanh(\pi r)dr. \end{eqnarray*}

The contribution of the first term and all the complementary spectrum is bounded by some constant times
$$h_{x,\epsilon}(i/2)=2\epsilon \hat{\Psi}(\epsilon i)(e^{x/2}+e^{-x/2})=O(\epsilon e^{x/2}).$$
As before, since $||\hat{h}_{x,\epsilon}(t)||_\infty\leq 2||\Psi||_\infty$ is uniformly bounded, the contribution of the elliptic terms is bounded by $O(1)$.
We can bound the integral
\[\int_\bbR h_{x,\epsilon}(r)r\tanh(\pi r)dr\ll\epsilon\int_0^\infty \hat\Psi(\epsilon r)rdr=O(\epsilon^{-1}).\]
Finally, the contribution of the principal spectrum is bounded (up to some constant) by $\epsilon \sum_{r_k\in \bbR}\hat{\Psi}(2\epsilon r_k)=O(\frac{1}{\epsilon}),$
where we used the fast decay of $\hat{\Psi}(r)$ together with the estimate
\[\#\left\{k,\sigma|x-\tfrac{1}{2}\leq \epsilon r_k(\sigma)\leq x+\tfrac{1}{2} \right\}=O(\frac{x}{\epsilon^2}),\]
implied by lemma \ref{l:evcount}.
\end{proof}

\subsection{Main estimate}
The family of functions
$$\{H_m(\theta)|m_1,\ldots,m_n\in\bbZ\setminus\{0\}\},$$
with $H_m(\theta)$ given by (\ref{e:Hm}) form an orthonormal basis for $L^2((\bbR/2\pi\bbZ)^{n},\mu)$.
We can decompose any such function as $$f(\theta)=\sum_{m} a_f(m)H_m(\theta),$$
with $a_f(m)=\int f(\theta)\overline{H_m(\theta)}d\mu(\theta)$.
(The coefficients $a_f(m)$ are just the standard Fourier coefficients $\hat{g}(m)$ for $g(\theta)=f(\theta)\prod_j(1-e^{-\sgn(m_j)\theta_j})$.)
For any $f\in C^\infty((\bbR/2\pi\bbZ)^n)$ let
\begin{equation}\label{e:Cf}
C(f)=\sum_{m}|m|^*|a_f(m)|.
\end{equation}

\begin{prop}\label{p:main}
For any $f\in C^\infty((\bbR/2\pi\bbZ)^{n})$ we have
\[|{\mathop{\sum_{\{\gamma\}}}_{l_\gamma\leq x}}'\frac{l_{\gamma_p}f(\theta_\gamma)}{2\sinh(l_\gamma/2)}-2^{n+1}e^{x/2}\mu(f)|\ll \sqrt{\norm{f}_\infty C(f)}e^{x/4}+C(f) e^{\alpha x}.\]
where $\alpha=\sup_{m}\alpha_m$ as in Theorem \ref{t:equi}.
\end{prop}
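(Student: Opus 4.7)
The plan is to introduce a smoothing scale $\eta\in(0,1)$, replace the sharp cutoff by a smoothed version $\hat{h}_{x,\eta}$ (smooth and even, equal to $1$ on $[-x+\eta,x-\eta]$ and vanishing outside $[-x-\eta,x+\eta]$), apply the hybrid trace formula of Theorem \ref{t:Htrace} termwise after the expansion $f=\sum_m a_f(m)H_m$, and then optimize in $\eta$. By Lemma \ref{l:shortsum} the smoothing introduces an error of size $O(\|f\|_\infty(\eta e^{x/2}+\eta^{-1}))$. For each weight $m=(0,m_1,\ldots,m_n)$ with $m_j\neq 0$, Theorem \ref{t:Htrace} expresses ${\sum_\gamma}'\frac{l_{\gamma_p}\hat{h}_{x,\eta}(l_\gamma)H_m(\theta_\gamma)}{2\sinh(l_\gamma/2)}$ as $\sum_k h_{x,\eta}(r_k(m))+(-1)^n\delta_{m,\sgn m}h_{x,\eta}(i/2)-I(m)-E(m)$; weighting by $a_f(m)$ and summing decomposes the target into a main term, a spectral remainder, and identity plus elliptic contributions.

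A factorwise computation yields $\sum_{\sigma\in\{\pm 1\}^n}H_\sigma(\theta)=(-1)^n$ and the identity $H_m(\theta)\prod_j\sin^2(\theta_j/2)=\prod_j\tfrac{1}{4}(e^{im_j\theta_j}-e^{i(m_j-\sgn m_j)\theta_j})$, so that $\int H_m\,d\mu=(-1/2)^n$ when $m\in\{\pm 1\}^n$ and $0$ otherwise. Integrating $\mu(f)=\sum_m a_f(m)\int H_m\,d\mu$ then produces the normalization $\sum_\sigma a_f(\sigma)=(-2)^n\mu(f)$, and since $h_{x,\eta}(i/2)=4\sinh(x/2)+O(\eta e^{x/2})$, the $\delta$-contribution becomes $(-1)^n h_{x,\eta}(i/2)\sum_\sigma a_f(\sigma)=2^n h_{x,\eta}(i/2)\mu(f)=2^{n+1}e^{x/2}\mu(f)+O(\eta e^{x/2}\|f\|_\infty)$, which is the main term of the proposition.

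The identity contribution $\sum_m a_f(m)I(m)$ is $O(C(f)x)$ via $\|h_{x,\eta}\|_\infty\ll x$ combined with the rewrite $r\tanh(\pi r)=r-2r(e^{2\pi r}+1)^{-1}$ (the odd first piece vanishes against the even $h_{x,\eta}$), while the finite elliptic sum is $O(C(f))$. The spectral remainder $\sum_m a_f(m)\sum_k h_{x,\eta}(r_k(m))$ is handled weight by weight: for real $r_k$, combine the Weyl count $N_m(T)\ll |m|^*T$ of Lemma \ref{l:evcount} with the decay bounds $|h_{x,\eta}(r)|\ll |r|^{-1}$ (one integration by parts) for $|r|\leq\eta^{-1}$ and $|h_{x,\eta}(r)|\ll\eta^{-2}|r|^{-3}$ (three integrations by parts) for $|r|\geq\eta^{-1}$, yielding $O(|m|^*\eta^{-1})$ per weight; the $O(|m|^*)$ complementary eigenvalues $r_k=ic_k$ with $c_k\leq\alpha_m\leq\alpha$ contribute $O(|m|^*e^{\alpha x})$ via $|h_{x,\eta}(ic)|\ll e^{cx}$. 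Summing in $m$ gives total spectral error $O(C(f)(\eta^{-1}+e^{\alpha x}))$. Balancing $\|f\|_\infty\eta e^{x/2}$ against $C(f)\eta^{-1}$ by choosing $\eta^2=C(f)/(\|f\|_\infty e^{x/2})$ produces the target $\sqrt{\|f\|_\infty C(f)}\,e^{x/4}+C(f)e^{\alpha x}$. The principal technical obstacle is the uniform-in-$m$ estimate for the real-spectrum sum, where the Weyl count must be paired carefully with successive integrations by parts on $h_{x,\eta}$ to keep the implied constant linear in $|m|^*$; a secondary bookkeeping point is the normalization identity $\sum_\sigma a_f(\sigma)=(-2)^n\mu(f)$, which is what produces the correct factor $2^n$ in the main term.
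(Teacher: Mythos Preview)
Your approach is essentially identical to the paper's: smooth the cutoff, expand $f$ in the basis $H_m$, apply Theorem \ref{t:Htrace} term by term, identify the main term via $\mu(H_m)=(-1/2)^n\delta_{m,\sgn m}$, bound the spectral and elliptic pieces, and optimize the smoothing scale. The paper does exactly this (writing $\epsilon$ for your $\eta$) and chooses the same $\epsilon=\sqrt{C(f)/\|f\|_\infty}\,e^{-x/4}$.

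One small gap: your estimate for the identity contribution $I(m)=\frac{|m|^*\vol(\calF_\Gamma)}{(4\pi)^{n+1}}\int h_{x,\eta}(r)\,r\tanh(\pi r)\,dr$ is not quite justified as written. The rewrite $r\tanh(\pi r)=r-2r(e^{2\pi r}+1)^{-1}$ does kill the first piece by parity, but the second factor $2r(e^{2\pi r}+1)^{-1}$ is \emph{not} in $L^1(\bbR)$ (it behaves like $2r$ as $r\to-\infty$), so pairing it with $\|h_{x,\eta}\|_\infty\ll x$ does not give a finite bound. The clean way (and what the paper does implicitly, lumping this with the principal spectrum) is to note that $|h_{x,\eta}(r)\,r\tanh(\pi r)|=|2\sin(xr)\hat\Psi(\eta r)\tanh(\pi r)|\leq 2|\hat\Psi(\eta r)|$, so the integral is $O(\eta^{-1})$. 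This is no worse than your spectral bound $O(|m|^*\eta^{-1})$, so it is absorbed and the final conclusion is unaffected.
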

\begin{proof}
Let $\Psi$ be a smooth positive even function supported on $[-1,1]$ and let $\id_x$ denote the indicator function of $[-x,x]$.
Let $\Psi_\epsilon(t)=\frac{1}{\epsilon}\Psi(t/\epsilon)$ and let $\hat{h}_{x,\epsilon}=\id_x*\Psi_\epsilon$ denote the convolution of $\id_x$ and $\Psi_\epsilon$. The function $\hat{h}_{x,\epsilon}$ is supported on $[-x-\epsilon,x+\epsilon]$, it takes values between zero and one and it is equal to one on $[-x+\epsilon,x-\epsilon]$.
Its inverse Fourier transform is given by $h_{x,\epsilon}(r)=\hat{\id}_{x}(r)\hat{\Psi}_\epsilon(r)=\frac{2\sin(xr)}{r}\hat{\Psi}(\epsilon r)$.
Using Lemma \ref{l:shortsum} we can approximate the sharp cutoff with the smooth one, that is,
\begin{eqnarray}\label{e:smoothing1}
|{\mathop{\sum_{\{\gamma\}}}_{l_\gamma\leq x}}'\frac{l_{\gamma_p}f(\theta_\gamma)}{2\sinh(l_\gamma/2)}-{\sum_{\{\gamma\}}}'\frac{l_{\gamma_p}h_{x,\epsilon}(l_\gamma)f(\theta_\gamma)}{2\sinh(l_\gamma/2)}|\leq
\\
\nonumber\leq \mathop{{\sum_{\{\gamma\}}}'}_{x-\epsilon\leq l_\gamma\leq x+\epsilon}\frac{l_{\gamma_p}|f(\theta_\gamma)|}{2\sinh(l_\gamma/2)}\ll \norm{f}_\infty(\epsilon e^{x/2}+\frac{1}{\epsilon}).
\end{eqnarray}
To estimate the smoothed sum, expand $f(\theta)=\sum_{m} a_f(m)H_m(\theta)$ to get
\begin{eqnarray*}
\lefteqn{|{\sum_{\{\gamma\}}}'\frac{l_{\gamma_p}h_{x,\epsilon}(l_\gamma)f(\theta_\gamma)}{2\sinh(l_\gamma/2)}-2^{n+1}e^{x/2}\mu(f)|\leq}\\
&&\sum_m |a_f(m)|\left|{\sum_{\{\gamma\}}}'\frac{l_{\gamma_p}h_{x,\epsilon}(l_\gamma)H_m(\theta_\gamma)}{2\sinh(l_\gamma/2)}-2^{n+1}e^{x/2}\mu(H_m)\right|.
\end{eqnarray*}
Using the trace formula we can replace each of the inner sums,
\begin{eqnarray*}	
\lefteqn{{\sum_{\{\gamma\}}}'\frac{l_{\gamma_p}h_{x,\epsilon}(l_\gamma)}{2\sinh(l_\gamma/2)}H_m(\theta_\gamma)=}\\
&=& (-1)^{n}\delta_{m,\sigma}h_{x,\epsilon}(i/2)+\sum_{k}h_{x,\epsilon}(r_k(m))\\
&-& {\sum_{\{\gamma\}}}''\frac{\tilde{h}_{x,\epsilon}(\theta_{\gamma_0},0)}{M_{\gamma_p}\sin(\theta_{\gamma_0}/2)}H_m(\theta_\gamma)- \frac{\vol(\G\bs G)|m|^*}{(4\pi)^{n+1}}\int_\bbR h_{x,\epsilon}(r)r\tanh(\pi r)dr.
\end{eqnarray*}
Estimate $h_{x,\epsilon}(i/2)=2e^{x/2}+O(\epsilon e^{x/2})$ and bound the contribution of the complementary spectrum by $O(|m|^*e^{\alpha x})$. The function $\hat{h}_{x,\epsilon}\leq 1$ is uniformly bounded so the contribution of the elliptic conjugacy classes are bounded by $O(1)$.
As in the proof of Lemma \ref{l:shortsum}, the contribution of the principal spectrum and the trivial conjugacy class are bounded by $O(|m|^*\epsilon^{-1})$. We thus get that
\[{\sum_{\{\gamma\}}}'\frac{l_{\gamma_p}h_{x,\epsilon}(l_\gamma)H_m(\theta_\gamma)}{2\sinh(l_\gamma/2)}=
(-1)^{n}2\delta_{m,\sigma}e^{x/2}(1+O(\epsilon))+O(|m|^*e^{\alpha x})+O(\frac{|m|^*}{\epsilon}).\]
Since $\mu(H_m)=(-\tfrac{1}{2})^{n}\delta_{m,\sigma}$ we get
\[\left|{\sum_{\{\gamma\}}}'\frac{l_{\gamma_p}h_{x,\epsilon}(l_\gamma)H_m(\theta_\gamma)}{2\sinh(l_\gamma/2)}-2^{n+1}e^{x/2}\mu(H_m)\right|\ll \frac{|m|^*}{\epsilon}+|m|^*e^{\alpha x}+\delta_{m,\sigma}\epsilon e^{x/2}.\]
Multiplying by $a_f(m)$ and summing over all $m$ we get
\begin{eqnarray*}
|{\sum_{\{\gamma\}}}'\frac{l_{\gamma_p}h_{x,\epsilon}(l_\gamma)f(\theta_\gamma)}{2\sinh(l_\gamma/2)}-2^{n+1}e^{x/2}\mu(f)|\ll C(f)e^{\alpha x}+C(f)\epsilon^{-1}+\norm{f}_\infty \epsilon e^{x/2},
\end{eqnarray*}
where we used the bound $\sum_{\sigma}|a_f(\sigma)|\ll \norm{f}_\infty$.
By (\ref{e:smoothing1}) we get the same estimate for the sharp cutoff
and taking $\epsilon=\sqrt{\frac{C(f)}{\norm{f}_\infty}}e^{-x/4}$ concludes the proof.
\end{proof}

\subsection{Proof of Theorem \ref{t:PGT0}}
The prime geodesic theorem now follows form Proposition \ref{p:main} with $f$ the constant function.
\begin{proof}
Apply Proposition \ref{p:main} with $f\equiv 1$ the constant function. We thus get
\begin{equation}\label{e:sum1}
{\mathop{\sum_{\{\gamma\}}}_{l_\gamma\leq x}}'\frac{l_{\gamma_p}}{2\sinh(l_\gamma/2)}=2^{n+1}e^{x/2}+O(e^{x/4})+O(e^{\alpha_1 x}).
\end{equation}
(The decomposition of the constant function only involves the function $H_\sigma$ with $\sigma\in\{\pm1\}^n$, hence, the error term depends only on $\alpha_1$.)
The contribution of the primitive conjugacy classes is trivially bounded by the whole sum so
\[\Theta(x):={\mathop{\sum_{\{\gamma_p\}}}_{l_{\gamma_p}\leq x}}'\frac{l_{\gamma_p}}{2\sinh(l_{\gamma_p}/2)}=O(e^{x/2}).\]
Integrating by parts we get the preliminary bound
$$\pi_p(x)=\int^x \frac{2\sinh(t/2)}{t}d\Theta(t)=O(\frac{e^x}{x}).$$
We can now estimate the contribution of the non-primitive conjugacy classes to (\ref{e:sum1}), that is
\[\sum_{k=2}^\infty\sum_{l_{\gamma_p}\leq x/k}\frac{l_{\gamma_p}}{2\sinh(kl_{\gamma_p}/2)}=\sum_{k=2}^{\infty} \int_\delta^{x/k}\frac{t}{2\sinh(kt/2)}d\pi_p(t).\]
Since there is a geodesic with a shortest length $\delta$ this is a finite sum. Using the preliminary bound $\pi_p(x)=O(\frac{e^x}{x})$ and integrating by parts we get that this sum is bounded by $O(x)$.
Consequently, the same asymptotic formula as (\ref{e:sum1}) is valid when summing over primitive geodesics, that is,
\[\Theta(x)=\sum_{l_{\gamma_p}\leq x}\frac{l_{\gamma_p}}{2\sinh(l_{\gamma_p}/2)}=2^{n+1}e^{x/2}+O(e^{x/4})+O(e^{\alpha_1 x})\]
Let $$\tilde\Theta(x)=\sum_{l_{\gamma_p}\leq x}l_{\gamma_p}e^{-l_{\gamma_p}/2},$$
then $\Theta(x)=\tilde\Theta(x)+O(1)$.
Using integration by parts together with the estimate
$\tilde\Theta(x)=2^{n+1}e^{x/2}+O(e^{x/4})+O(e^{\alpha x})$,
we get
\[\pi_p(x)=\int^x \frac{e^{t/2}}{t}d\tilde\Theta(t)=2^{n}Li(e^x)+O(\frac{e^{3 x/4}}{x})+O(e^{(\alpha_1+\tfrac12)x}).\]
\end{proof}

\subsection{Proof of Theorem \ref{t:equi}}
Similar to the previous case, the equidistribution for smooth test functions follows from Proposition \ref{p:main} and integration by parts.
\begin{proof}
Without loss of generality we may assume that $\int fd\mu=0$. Then, Proposition \ref{p:main} gives
\[|{\mathop{\sum_{\{\gamma\}}}_{l_\gamma\leq x}}'\frac{l_{\gamma_p}}{2\sinh(l_\gamma/2)}f(\theta_\gamma)|\ll \sqrt{\norm{f}_\infty C(f)}e^{x/4}+C(f) e^{\alpha x}.\]

The contribution of the non-primitive conjugacy classes is bounded by $O(x)$, so the same estimate holds when summing over primitive elements
\[{\mathop{\sum_{\{\gamma_p\}}}_{l_{\gamma_p}\leq x}}'\frac{l_{\gamma_p}}{2\sinh(l_{\gamma_p}/2)}f(\theta_{\gamma_p})|\ll \sqrt{\norm{f}_\infty C(f)}e^{x/4}+C(f) e^{\alpha x}.\]
As before, the same bound also holds for
\[\Theta_f(x):=\sum_{l_{\gamma_p}\leq x}e^{-l_{\gamma_p}/2}l_{\gamma_p}f(\theta_{\gamma_p})\ll \sqrt{\norm{f}_\infty C(f)}e^{x/4}+C(f) e^{\alpha x}.\]
Integrating by parts we get
\[\sum_{l_{\gamma_p}\leq x}f(\theta_{\gamma_p})=\int^x \frac{e^{t/2}}{t}d\Theta_f(t)\ll |\Theta_f(x)|e^{x/2},\]
concluding the proof.
\end{proof}

\subsection{A sharp cutoff}
Since we have good control on how the error term depends on $f$, we can give bounds for the error term also for a sharp cutoff function. We show this in the simple case where $f=\id_A$ is the indicator function of a rectangle.
\begin{cor}\label{c:equi}
For any rectangle $A\subset [-\pi,\pi]^{n}$ we have
\[\frac{\pi_p(x;A)}{\pi_p(x)}=\mu(A)+O(e^{-c x}),\]
where the implied constant is independent of $A$ and the exponent is given by
$c=\left\lbrace\begin{array}{cc} \frac{1}{2(n+2)} & \alpha\leq \frac{1}{2(n+2)} \\ \frac{1-2\alpha}{2(n+1)} & \alpha>\frac{1}{2(n+2)} \end{array}\right..$
\end{cor}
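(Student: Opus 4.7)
The plan is to bracket $\id_A$ between smooth approximations $f_\delta^\pm$ with $f_\delta^-\leq\id_A\leq f_\delta^+$ at a smoothing scale $\delta>0$, apply Proposition \ref{p:main} to each (converting from the weighted sum to the unweighted count over primitive geodesics as in the proof of Theorem \ref{t:equi}), and then optimize $\delta$ to balance the boundary error from the smoothing against the two spectral errors. Throughout, $A=\prod_{j=1}^n I_j$ is a rectangle, and we exploit this product structure to get the sharpest possible dependence of $C(f_\delta^\pm)$ on $\delta$.

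For each interval $I_j$, construct $\phi_{\delta,j}^\pm$ as a smooth non-negative function bracketing $\id_{I_j}$, obtained by convolving the indicator of a $\delta$-dilation (respectively $\delta$-contraction) of $I_j$ with a fixed $C_c^\infty$ bump of scale $\delta$, and set $f_\delta^\pm(\theta)=\prod_{j=1}^n\phi_{\delta,j}^\pm(\theta_j)$. Then $\norm{f_\delta^\pm}_\infty\leq 1$, $f_\delta^-\leq\id_A\leq f_\delta^+$, $|\mu(f_\delta^\pm)-\mu(A)|\ll\delta$ uniformly in $A$, and each $\phi_{\delta,j}^\pm$ has $k$-th derivative of size $O(\delta^{-k})$ on a support of measure $O(\delta)$, so $\|(\phi_{\delta,j}^\pm)^{(k)}\|_1\ll\delta^{1-k}$. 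Since $\mu$, the basis functions $H_m$ of \eqref{e:Hm}, and $f_\delta^\pm$ all factor across the coordinates, $a_{f_\delta^\pm}(m)$ factors (up to a bounded multiplicative constant) into one-dimensional coefficients of the $\phi_{\delta,j}^\pm$. Using one integration by parts for $|m_j|\leq\delta^{-1}$ and three integrations by parts for $|m_j|>\delta^{-1}$ gives the bound $|\widehat{\phi_{\delta,j}^\pm}(m_j)|\ll\min(|m_j|^{-1},\delta^{-2}|m_j|^{-3})$, from which $\sum_{m_j\neq 0}(2|m_j|-1)|\widehat{\phi_{\delta,j}^\pm}(m_j)|\ll\delta^{-1}$; multiplying across the $n$ coordinates yields
\[C(f_\delta^\pm)\ll\delta^{-n}.\]

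Applying Proposition \ref{p:main} to $f_\delta^\pm$ and repeating the partial-summation argument from the proof of Theorem \ref{t:equi} to pass from the weighted sum to $\sum_{l_{\gamma_p}\leq x}f_\delta^\pm(\theta_{\gamma_p})$, then dividing by $\pi_p(x)\sim e^x/x$ from Theorem \ref{t:PGT0}, gives
\[\frac{1}{\pi_p(x)}\sum_{l_{\gamma_p}\leq x}f_\delta^\pm(\theta_{\gamma_p})=\mu(f_\delta^\pm)+O\bigl(\delta^{-n/2}e^{-x/4}\bigr)+O\bigl(\delta^{-n}e^{(\alpha-1/2)x}\bigr).\]
Combining the sandwich $\sum f_\delta^-(\theta_{\gamma_p})\leq\pi_p(x;A)\leq\sum f_\delta^+(\theta_{\gamma_p})$ with $|\mu(f_\delta^\pm)-\mu(A)|\ll\delta$ yields
\[\frac{\pi_p(x;A)}{\pi_p(x)}=\mu(A)+O(\delta)+O\bigl(\delta^{-n/2}e^{-x/4}\bigr)+O\bigl(\delta^{-n}e^{(\alpha-1/2)x}\bigr).\]
Setting $\delta=e^{-cx}$, the three errors become $e^{-cx}$, $e^{(cn/2-1/4)x}$, and $e^{(cn+\alpha-1/2)x}$. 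If $\alpha\leq 1/(2(n+2))$, the third is dominated by the second, and balancing the first two gives $c=1/(2(n+2))$; if $\alpha>1/(2(n+2))$, the third dominates the second, and balancing the first and third gives $c=(1-2\alpha)/(2(n+1))$. All implied constants are independent of $A$.

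The main technical point is obtaining the sharp $\delta^{-n}$ bound on $C(f_\delta^\pm)$: this crucially relies on the product structure of the rectangle, which is what makes $a_{f_\delta^\pm}(m)$ factor and reduces the estimation to a one-dimensional Fourier tail. For a general smooth domain this factorization is unavailable and one would obtain a weaker exponent.
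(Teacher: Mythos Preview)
Your argument is correct and arrives at the same exponent as the paper, but the mechanics differ in two places. First, the paper uses the Beurling--Selberg trigonometric polynomials $S_{\calJ,N}^{\pm}$ of degree $N$ (so that $a_{f_N}(m)$ vanishes for $\max_j|m_j|>N$ and one gets $C(f_N)\leq (6N)^n$ directly from the explicit coefficient bound $|\hat S^\pm_{\calJ,N}(k)|\leq \tfrac{1}{N+1}+\tfrac{1}{|k|}$), whereas you use smooth mollifiers and control the infinite tail of $C(f_\delta^\pm)$ via integration by parts; with $N\sim\delta^{-1}$ the two give the same $\delta^{-n}$ bound. Second, for the lower bound the paper uses only the majorant $f_N^+$ and then recovers the minorant inequality by writing $[-\pi,\pi]^n=A\cup A_1\cup\cdots\cup A_k$ as a disjoint union of at most $2^n$ rectangles and applying the upper bound to each $A_i$; your two-sided sandwich with a product minorant $f_\delta^-=\prod_j\phi_{\delta,j}^-$ is a cleaner alternative that avoids this decomposition (and is available precisely because the mollified minorants are nonnegative, so the product inequality $\prod_j\phi_{\delta,j}^-\leq\prod_j\id_{I_j}$ holds). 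Both routes rely on the same essential point you identify at the end: the factorization of $a_f(m)$ across coordinates, which is what gives the exponent $n$ in $C(f)\ll\delta^{-n}$ rather than something worse.
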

\begin{proof}
We will use the Selberg-Beurling functions to approximate the indicator function (see \cite[Chapter 1.2]{Montgomery94} for details). For any interval $\calJ\subset[-\pi,\pi]$ and any $N\in\bbN$ the corresponding Selberg-Beurling functions $S_{\calJ,N}^{\pm}(\theta)$ are trigonometric polynomials of degree at most $N$,
$$S_{\calJ,N}^{\pm}(\theta)=\sum_{|k|\leq N}\hat{S}_{\calJ,N}^{\pm}(k) e^{-ik\theta},$$
satisfying that
\begin{equation}\label{e:SB1}
S_{\calJ,N}^{-}(\theta)\leq \id_{\calJ}(\theta)\leq S_N^{+}(\theta),\quad \forall \theta \in[-\pi,\pi],
\end{equation}
\begin{equation}\label{e:SB2}
\int_{-\pi}^\pi S_{\calJ,N}^{\pm}(\theta)d\theta=|\calJ|\pm \frac{2\pi}{N+1},
\end{equation}
which implies that the non-zero Fourier coefficients satisfy
\begin{equation}\label{e:SB3}
|\hat{S}_{\calJ,N}^\pm(k)|\leq \frac{1}{N+1}+\frac{1}{|k|},\quad \forall 1\leq |k|\leq N.
\end{equation}

For a rectangle $A=\prod_{j}\calJ_j$ let $f_N(\theta)=\prod_{j}S^+_{\calJ_j,N}(\theta_j)$, then by (\ref{e:SB1}) we have $f_N(\theta)\geq \id_A(\theta)$ for all $\theta\in[-\pi,\pi]^n$ (note that these functions are non-negative). The coefficients $a_{f_N}(m)$ are given in terms of the Fourier coefficients
\[a_{f_N}(m)=\prod_{j=1}^n\left(\frac{\hat{S}^+_{\calJ_j,N}(m_j)-\hat{S}^+_{\calJ_j,N}(m_j-\sgn(m_j))}{2}\right).\]
Using (\ref{e:SB3}) we get that
$$C(f_N)=\sum_{m}|m|^* |a_{f_N}(m)|\leq (6N)^n.$$
A similar calculation gives that $|\mu(f_N)-\mu(\id_A)|\leq \frac{n}{N+1}$, and since $\norm{S^{+}_{\calJ,N}}_{\infty}\leq 5$ we get that $\norm{f_N}_{\infty}=O(1)$ is uniformly bounded.

Now use $f_N$ to get an upper bound for $\frac{\pi(x;A)}{\pi_p(x)}$,
\[\frac{\pi(x;A)}{\pi_p(x)}\leq \frac{1}{\pi_p(x)}{\sum_{l_{\gamma_p}\leq x}}'f_N(\theta_{\gamma_p}).\]
From Theorem \ref{t:equi}, taking into account how the error terms depend on $f_N$, we get
\begin{eqnarray*} \frac{1}{\pi_p(x)}{\sum_{l_{\gamma_p}\leq x}}'f_N(\theta_{\gamma_p})
&=& \mu(f_N)+O(\sqrt{C(f_N)}e^{-x/4})+O(C(f_N)e^{(\alpha-\tfrac{1}{2})x})\\
&=& \mu(A)+ O(N^{-1})+O(N^{n/2}e^{-x/4})+O( N^n e^{(\alpha-\tfrac{1}{2})x}).
\end{eqnarray*}
Let $N=e^{cx}$ where $c=\frac{1}{2(n+2)}$ if $\alpha\leq \frac{1}{2(n+2)}$ and $c=\frac{1-2\alpha}{2(n+1)}$ if $\alpha > \frac{1}{2(n+2)}$ to get the upper bound
\[\frac{\pi(x;A)}{\pi_p(x)}\leq \mu(A)+O(e^{-cx}).\]

In order to get a lower bound, we write $[-\pi,\pi]^n=A\cup A_1\cup\cdots\cup A_k$ as a finite disjoint union of rectangles (with $k\leq 2^n$). Since the error term does not depend on the rectangle $A_i$ we get
\[\frac{\pi(x;A)}{\pi_p(x)}=1-\sum_{i=1}^k\frac{\pi(x;A_i)}{\pi_p(x)}\geq 1-\sum_{i=1}^k\mu(A_i)+O(e^{-cx})=\mu(A)+O(e^{-cx}).\]
\end{proof}

\section{Invariance of holonomy under sign changes}\label{s:sign}
Recall that the holonomy is invariant under the sign change $\sigma\in\{\pm 1\}^n$ if for any angle $\theta\in[-\pi,\pi]^n$ the number of primitive closed geodesics with holonomy $\theta$ is equal the number of closed geodesics with holonomy $\sigma\theta$. This property can be translated to the following property of the lattice: For every $\theta\in [-\pi,\pi]^n$ the number of primitive hyperbolic-elliptic $\Gamma$-conjugacy classes that are conjugated in $\PSL(2,\bbR)^{n+1}$ to $(a_l,k_\theta)$ equals the number of primitive hyperbolic-elliptic $\Gamma$-conjugacy classes that are conjugated in  $\PSL(2,\bbR)^{n+1}$ to $(a_l,k_{\sigma\theta})$.

We now use the trace formula to prove Theorem \ref{t:sign}, that is, we show that invariance of the holonomy angles under the sign change $\sigma$ is equivalent to a spectral correspondence between the spaces $V_m$ and $V_{\sigma m}$.
\begin{proof}[Proof of Theorem \ref{t:sign}]
Assume that the holonomy angles are invariant under the sign change $\sigma\in\{\pm1\}^n$.
To show that there is a linear map $\Theta_\sigma:V_m\to V_{\sigma m}$ commuting with $\Omega_0$ we just need to show that the spectrum of $\Omega_0$ is the same on both spaces, that is, that $\{r_k(m)\}_{k=0}^\infty=\{r_k(\sigma m)\}_{k=0}^\infty$.

Fix $m\in\bbZ^{n+1}$ with $m_0=0$ and $m_j\neq 0$ for $j\geq 1$. Let $\hat{h}\in C^\infty_c(\bbR)$ denote a smooth compactly supported function and consider the hyperbolic-elliptic sum in the trace formula, that is,
$${\sum_{\{\gamma\}}}'\frac{l_{\gamma_p}\hat{h}(l_\gamma)}{2\sinh(l_\gamma/2)}H_m(\theta_{\gamma}).$$
Since any conjugacy class $\{\gamma\}$ is a power of a primitive one we can rewrite this as
$${\sum_{\{\gamma_p\}}}'\sum_{q=1}^\infty\frac{l_{\gamma_p}\hat{h}(ql_{\gamma_p})}{2\sinh(ql_{\gamma_p}/2)}H_m(q\theta_{\gamma_p}).$$
Since the holonomy angles are invariant under the sign change $\sigma$ this is equal to
$${\sum_{\{\gamma_p\}}}'\sum_{q=1}^\infty\frac{l_{\gamma_p}\hat{h}(ql_{\gamma_p})}{2\sinh(ql_{\gamma_p}/2)}H_m(q\sigma\theta_{\gamma_p}).$$
We thus get the equality
\[{\sum_{\{\gamma\}}}'\frac{l_{\gamma_p}\hat{h}(l_\gamma)}{2\sinh(l_\gamma/2)}H_m(\theta_{\gamma})=
{\sum_{\{\gamma\}}}'\frac{l_{\gamma_p}\hat{h}(l_\gamma)}{2\sinh(l_\gamma/2)}H_m(\sigma\theta_{\gamma}).\]
Notice that $H_m(\sigma\theta)=H_{\sigma m}(\theta)$ so that we have
\[{\sum_{\{\gamma\}}}'\frac{l_{\gamma_p}\hat{h}(l_\gamma)}{2\sinh(l_\gamma/2)}H_m(\theta_{\gamma})=
{\sum_{\{\gamma\}}}'\frac{l_{\gamma_p}\hat{h}(l_\gamma)}{2\sinh(l_\gamma/2)}H_{\sigma m}(\theta_{\gamma}).\]
Since the contribution of the trivial conjugacy class to the trace formula does not depend on the sign of $m$,
after subtracting the contribution of the elliptic elements we get the equality
\begin{eqnarray}\label{e:spectral2}
\lefteqn{\sum_k h(r_k(m))- {\sum_{\{\gamma\}}}''\frac{\tilde{h}(\theta_{\gamma_0},0)H_m(\theta_{\gamma})}{M_{\gamma_p}\sin(\theta_{\gamma_0}/2)}=} \\
\nonumber &&=\sum_k h(r_k(\sigma m)) -{\sum_{\{\gamma\}}}''\frac{\tilde{h}(\theta_{\gamma_0},0)H_m(\sigma\theta_{\gamma})}{M_{\gamma_p}\sin(\theta_{\gamma_0}/2)}.
\end{eqnarray}

We claim that this equality implies that the spectrum of $V_m$ and $V_{\sigma m}$ are the same.
We first show that the exceptional spectrum is the same. Let
$$\tfrac{1}{2}\geq c_0(m)>c_1(m)\cdots>c_q(m)>0$$ be such that $\frac{1}{4}-c_k(m)^2\in[0,\frac{1}{4})$ are all the exceptional eigenvalues and let $d_k(m)$ denote the multiplicity of the corresponding eigenvalue.  Let
$\Psi$ denote a positive smooth even function supported on $[-1,1]$ with $\int_{-1}^1\Psi(t)dt=1$. For $x>0$ let $h_x(r)=\frac{2\sin(xr)}{r}\hat{\Psi}(r)$. For $r\in\bbR$ real we have that $|h_x(r)|\leq 2x|\hat\Psi(r)|$ so that the sum $|\sum_{r_k(m)\in\bbR} h_x(r_k(m))|=O(x)$. The Fourier transform $|\hat{h}_x(t)|=|\id_x*\Psi(t)|\leq 1 $ is uniformly bounded so the contribution of the elliptic elements are bounded by $O(1)$. Consequently, the equality (\ref{e:spectral2}) with this test function implies that
\[\sum_{k=0}^q d_k(m)\frac{e^{xc_k(m)}}{c_k(m)}\hat{\Psi}(ic_k(m))=\sum_{k=0}^q d_k(\sigma m)\frac{e^{xc_k(\sigma m)}}{c_k(\sigma m)}\hat{\Psi}(ic_k(m)) +O(x).\]
Dividing by $e^{xc_0(m)}$ and taking $x\to\infty$ implies that $c_0(m)=c_0(\sigma m)$ and that $d_0(m)=d_0(\sigma m)$.
We can thus subtract its contribution form both sides and continue in the same way to get that $c_k(m)=c_k(\sigma m)$ and $d_k(m)=d_k(\sigma m)$ for all $k=1,\ldots,q$.

Now for the principal spectrum. Let $h$ be an even function with Fourier transform $\hat{h}\in C^\infty(\bbR)$ even, supported on $[-1,1]$, and satisfies $\int_{-1}^1\hat{h}(t)dt=1$.  For $x,\epsilon>0$ let $h_{\epsilon,x}(r)=\tfrac{h(\frac{x-r}{\epsilon})+h(\frac{x+r}{\epsilon})}{2}$ so that its Fourier transform is given by $\hat{h}_{\epsilon,x}(t)=\epsilon \cos(xt)\hat{h}(\epsilon t)$.
Now fix $x=r_{k_0}(m)\in\bbR$ for some $k_0$ and assume that $\epsilon$ is sufficiently small such that there are no other eigenvalues in $[x-\sqrt{\epsilon},x+\sqrt{\epsilon}]$. Use (\ref{e:spectral2}) with the test function $h_{\epsilon,x}$.
From the previous part, the contribution of all the exceptional eigenvalues cancels out.
Since we have $||\hat{h}_{\epsilon,x}(t)||_\infty\leq\epsilon ||\hat{h}||_\infty$ we can bound the contribution of the elliptic conjugacy classes by $O(\epsilon)$. From the fast decay of $h(r)$ as $r\to\infty$ we get that the contribution of $\{r_k(m)\in\bbR|r_k(m)\neq x\}$ satisfies
\[\sum_{|r_k(m)-x|>\sqrt{\epsilon}}h_{x,\epsilon}(r_k(m))=O(\epsilon).\]
We thus get that
\[\#\{r_k(m)=x\}=\#\{r_k(\sigma m)=x\}+O(\epsilon).\]
Taking $\epsilon\to 0$ we get equality.

For the other direction, assume that we have a spectral correspondence between $V_m$ and $V_{\sigma m}$ for every $m$ for some fixed sign $\sigma\in\{\pm1\}^n$. Now assume that there is a pair $(l,\theta)$ such that $$0< \#\{\{\gamma_p\}|(l_{\gamma_p},\theta_{\gamma_p})=(l,\theta)\}\neq \#\{\{\gamma_p\}|(l_{\gamma_p},\theta_{\gamma_p})=(l,\sigma\theta)\}.$$
Also assume that $l$ is the smallest number for which this holds (recall that the set of lengthes of geodesics is discrete and that the angle is determined up to a sign by the length). Let $\hat{h}$ be a function supported on $[-l-\delta,l+\delta]$ where $\delta<l$ is sufficiently small so that there are no primitive conjugacy classes with $l<l_{\gamma_p}\leq l+\delta$, and let $H\in C^\infty((\bbR/\pi\bbZ)^n)$ be supported in a sufficiently small neighborhood of $\theta$ such that other then $(l_\gamma,\theta_\gamma)=(l,\theta)$, there are no other holonomy angles in its support with $l_{\gamma}\leq l+\delta$ (and also no angles corresponding to elliptic conjugacy classes). Since we can decompose $H$ as a sum over the functions $H_m$, using the trace formula and the same argument as before we get that
\[{\sum_{\{\gamma_p\}}}'l_{\gamma_p}\sum_{q=1}^\infty \frac{\hat h(q l_{\gamma_p})}{2\sinh(ql_{\gamma_p}/2)}H(q\theta_{\gamma_p})={\sum_{\{\gamma_p\}}}'l_{\gamma_p}\sum_{q=1}^\infty \frac{\hat h(q l_{\gamma_p})}{2\sinh(ql_{\gamma_p}/2)}H(q\sigma\theta_{\gamma_p}).\]
For all primitive conjugacy classes with $l_{\gamma_p}<l$ by our assumption of minimality we have exact cancelation, and for primitive conjugacy classes with $l_{\gamma_p}>l$ there is no contribution to either side (because $\hat h(ql_{\gamma_p})=0$). We thus get that the only contribution is from conjugacy classes with $l_{\gamma_p}=l$ and $\theta_{\gamma_p}=\theta$ on the left hand side and $\theta_{\gamma_p}=\sigma\theta$ on the right. Consequently,
$$\#\{\{\gamma_p\}|(l_{\gamma_p},\theta_{\gamma_p})=(l,\theta)\}= \#\{\{\gamma_p\}|(l_{\gamma_p},\theta_{\gamma_p})=(l,\sigma\theta)\},$$
in contradiction.
\end{proof}

Except for the case $\sigma=-1$, where complex conjugation gives the correspondence between $V_m$ and $V_{-m}$, it is not clear how to obtain such a correspondence. It is thus interesting to find for which signs $\sigma$ such a correspondence exists. We now give a condition on the lattice $\Gamma$ that implies the invariance of holonomy angles under sign changes, and hence also the spectral correspondence.

Composing the sign function $\sgn(x)=\frac{x}{|x|}$ with the determinant gives a function from $\GL(2,\bbR)$ to $\{\pm1\}$.
Note that replacing $\tau$ by $x\tau$ for $x\in \bbR^*$ does not change the sign of the determinant so that the map $\tau\mapsto\sgn(\det\tau)$ is well defined on $\PGL(2,\bbR)$.
\begin{lem}
Let $g\in \PSL(2,\bbR)$ with $|\Tr(g)|<2$ so that $g$ is conjugated in $\PSL(2,\bbR)$ to some $k\in \PSO(2)$.
Then, for any $\tau\in \GL(2,\bbR)$ we have that $\tau^{-1}g\tau\in\PSL(2,\bbR)$ is conjugated in $\PSL(2,\bbR)$ to $k^{\sgn(\det\tau)}$.
\end{lem}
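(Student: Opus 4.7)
The plan is to reduce to two cases according to the sign of $\det\tau$ by using that conjugation by a scalar multiple of $\tau$ gives the same action on $\PSL(2,\bbR)$. Since $|\det\tau|^{1/2}$ can be absorbed, I may assume without loss of generality that $\det\tau=\pm 1$, so that either $\tau\in\SL(2,\bbR)$ or $\tau\in J\cdot\SL(2,\bbR)$, where $J=\begin{pmatrix}1 & 0\\ 0 & -1\end{pmatrix}$ is any fixed representative of the non-identity coset of $\PSL(2,\bbR)$ in $\PGL(2,\bbR)$.

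Case 1 ($\det\tau>0$): here $\tau$ represents an element of $\PSL(2,\bbR)$, so $\tau^{-1}g\tau$ is $\PSL(2,\bbR)$-conjugate to $g$, and hence to $k=k^{\sgn(\det\tau)}$. There is nothing to do.

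Case 2 ($\det\tau<0$): Write $\tau=\tau_0 J$ with $\tau_0\in\SL(2,\bbR)$. Then $\tau_0^{-1}g\tau_0$ is $\PSL(2,\bbR)$-conjugate to $k$, say $\tau_0^{-1}g\tau_0=hkh^{-1}$ with $h\in\PSL(2,\bbR)$. Conjugating further by $J$, since $h\in\SL(2,\bbR)$ implies $J^{-1}hJ\in\SL(2,\bbR)$, I get
\[
\tau^{-1}g\tau=J^{-1}hJ\cdot J^{-1}kJ\cdot (J^{-1}hJ)^{-1},
\]
so $\tau^{-1}g\tau$ is $\PSL(2,\bbR)$-conjugate to $J^{-1}kJ$. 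The final step is a direct matrix calculation: writing $k=k_\theta$ and computing
\[
J^{-1}k_\theta J=\begin{pmatrix}\cos(\theta/2) & -\sin(\theta/2)\\ \sin(\theta/2) & \cos(\theta/2)\end{pmatrix}=k_{-\theta}=k_\theta^{-1},
\]
which shows $\tau^{-1}g\tau$ is conjugate to $k^{-1}=k^{\sgn(\det\tau)}$, as required.

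There is no real obstacle here; the only subtlety is keeping the $\PGL$ versus $\PSL$ bookkeeping straight, i.e., checking that after normalizing $\det\tau=\pm 1$ the element $J^{-1}hJ$ still lies in $\SL(2,\bbR)$ so that conjugation by it is a legitimate $\PSL(2,\bbR)$-conjugation. Once the reduction to $\tau\in\{I,J\}\cdot\SL(2,\bbR)$ is in place, the identity $Jk_\theta J^{-1}=k_\theta^{-1}$ does all the work.
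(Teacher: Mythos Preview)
Your proof is correct and is essentially a fully unpacked version of the paper's one-line argument. The paper simply observes that $k$ and $k^{-1}$ are conjugate in $\GL(2,\bbR)$ but not in $\SL(2,\bbR)$; your explicit choice of $J=\begin{pmatrix}1&0\\0&-1\end{pmatrix}$ and the computation $Jk_\theta J^{-1}=k_{-\theta}$ make precisely this point concrete, and your coset decomposition $\GL(2,\bbR)=\bbR^*\cdot\SL(2,\bbR)\cup\bbR^*\cdot J\,\SL(2,\bbR)$ is exactly how one reads off which $\PSL(2,\bbR)$-class $\tau^{-1}g\tau$ lands in.
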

\begin{proof}
This is obvious after noting that $k$ and $k^{-1}$ are conjugated in $\GL(2,\bbR)$ but not in $\SL(2,\bbR)$.
\end{proof}

Let $\tilde{G}=\GL(2,\bbR)^{n+1}$. Let $\sgn_0:\bbR^{n+1}\to\{\pm1\}^{n}$ denote the projection of the sign function to the last $n$ coordinates.
Denote by $N_{\tilde{G}}(\Gamma)$ the normalizer of $\Gamma$ in $\tilde{G}$, that is,
\[N_{\tilde{G}}(\Gamma)=\{\tau\in\GL(2,\bbR)^{n+1}|\tau^{-1}\Gamma \tau=\Gamma\}.\]
\begin{prop}\label{p:condition}
For any $\sigma\in \sgn_0(\det N_{\tilde{G}}(\Gamma))$, the holonomy angles are invariant under the sign change $\sigma$.
\end{prop}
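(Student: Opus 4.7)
The plan is to take any $\tau \in N_{\tilde G}(\Gamma)$ realizing the given $\sigma=\sgn_0(\det\tau)$ and show that conjugation by $\tau$ induces the required bijection on primitive hyperbolic-elliptic $\Gamma$-conjugacy classes. Since $\tau^{-1}\Gamma\tau=\Gamma$, the map $\gamma\mapsto\tau^{-1}\gamma\tau$ is a bijection of $\Gamma$ onto itself. It sends $\Gamma$-conjugacy classes to $\Gamma$-conjugacy classes (because $\tau^{-1}(\delta^{-1}\gamma\delta)\tau=(\tau^{-1}\delta\tau)^{-1}(\tau^{-1}\gamma\tau)(\tau^{-1}\delta\tau)$ and $\tau^{-1}\delta\tau\in\Gamma$), and it preserves primitivity, since $\gamma=\gamma_p^k$ if and only if $\tau^{-1}\gamma\tau=(\tau^{-1}\gamma_p\tau)^k$.

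Next I would check what this bijection does to the invariants $(l_\gamma,\theta_\gamma)$. For a hyperbolic-elliptic $\gamma=(\gamma_0,\ldots,\gamma_n)$, the element $\tau^{-1}\gamma\tau=(\tau_0^{-1}\gamma_0\tau_0,\ldots,\tau_n^{-1}\gamma_n\tau_n)$ again lies in $\PSL(2,\bbR)^{n+1}$ (each factor has determinant $1$), and each $\tau_j^{-1}\gamma_j\tau_j$ has the same trace as $\gamma_j$ because trace is invariant under $\GL(2,\bbR)$-conjugation. Hence the first factor remains conjugate in $\PSL(2,\bbR)$ to $a_{l_\gamma}$, so the length is unchanged. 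For each elliptic factor $j\geq 1$, the preceding lemma says that $\tau_j^{-1}\gamma_j\tau_j$ is conjugated in $\PSL(2,\bbR)$ to $k_{\theta_{\gamma,j}}^{\sgn(\det\tau_j)}=k_{\sigma_j\theta_{\gamma,j}}$. Therefore the class of $\tau^{-1}\gamma\tau$ has invariants $(l_\gamma,\sigma\theta_\gamma)$.

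Combining the two steps, conjugation by $\tau$ restricts to a bijection from the set of primitive hyperbolic-elliptic $\Gamma$-conjugacy classes with data $(l,\theta)$ onto those with data $(l,\sigma\theta)$, for every $(l,\theta)$. By the correspondence between primitive closed geodesics on $\Sigma_0$ and primitive hyperbolic-elliptic $\Gamma$-conjugacy classes recalled in Section~\ref{s:Background}, this is exactly the statement that the holonomy angles are invariant under the sign change $\sigma$.

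The only nontrivial input is the preceding lemma identifying the $\PSL(2,\bbR)$-conjugacy class of $\tau^{-1}k\tau$ for an elliptic $k$; apart from that, the argument is a direct unraveling of the definitions, so I do not expect any serious obstacle.
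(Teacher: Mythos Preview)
Your proof is correct and follows essentially the same approach as the paper's: fix $\tau\in N_{\tilde G}(\Gamma)$ with $\sgn_0(\det\tau)=\sigma$, observe that conjugation by $\tau$ permutes primitive $\Gamma$-conjugacy classes, and use the preceding lemma on each elliptic factor to see that the holonomy angle changes from $\theta$ to $\sigma\theta$ while the hyperbolic factor (and hence the length) is unchanged. The only difference is cosmetic---the paper phrases it by fixing $\theta$ and listing representatives $\gamma^{(1)},\ldots,\gamma^{(k)}$, whereas you describe the global bijection on conjugacy classes; you are also slightly more explicit about why the first (hyperbolic) factor keeps the same $\PSL(2,\bbR)$-conjugacy class.
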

\begin{proof}
Fix $\tau\in N_{\tilde{G}}(\Gamma)$ with $\sgn_0(\tau)=\sigma$. Fix an angle $\theta\in [-\pi,\pi]^n$ and let $\{\gamma^{(1)}\},\ldots,\{\gamma^{(k)}\}$ denote all the primitive $\Gamma$-conjugacy classes such that each $\gamma^{(i)}$ is conjugated in $\PSL(2,\bbR)^{n+1}$ to $(a_l,k_\theta)$. For each $1\leq i\leq k$ let $\tilde\gamma^{(i)}=\tau^{-1}\gamma^{(i)}\tau$. Then, by the above lemma, each $\tilde{\gamma}^{(i)}$ is conjugated in $\PSL(2,\bbR)^{n+1}$ to $(a_l,k_{\sigma\theta})$.
Since $\tau\in N_{\tilde{G}}(\Gamma)$, then $\tilde{\gamma}^{(i)}\in \Gamma$ is also a primitive element. Finally we claim that $\{\tilde\gamma^{(1)}\},\ldots,\{\tilde\gamma^{(k)}\}$ are $k$ distinct $\Gamma$-conjugacy classes. Indeed, if $\tilde\gamma^{(i)}$ and $\tilde\gamma^{(j)}$ are conjugated by some $\gamma\in \Gamma$, then $\gamma^{(i)}$ and $\gamma^{(j)}$ are conjugated by $\tau\gamma\tau^{-1}\in\Gamma$ in contradiction to the assumption that $\{\gamma^{(1)}\},\ldots,\{\gamma^{(k)}\}$ are distinct conjugacy classes.
\end{proof}

In the next section we consider lattices derived from quaternion algebras over number fields. For these lattices we can give many examples for which $\sgn_0(\det N_{\tilde{G}}(\Gamma))$ already contains all sign changes.


\section{Lattices derived from quaternion algebras}\label{s:quaternion}
Let $K$ be a totally real number field. Denote by $P$ the set of places of $K$ and by $P_\infty$ and $P_f$ the set of archimedean places and finite places of $K$ respectively. For each place $\nu\in P$ let $K_\nu$ be the completion of $K$ with respect to $\nu$ and fix an embedding $\iota_\nu:K \hookrightarrow K_\nu$. For any finite set of places $S$ let $\iota_S:K\hookrightarrow \prod_{\nu\in S}K_\nu$ denote the diagonal embedding. We denote by $\calO_K$ the ring of integers of $K$ and by $\calO_K^*$ the group units.

Let $\calA$ be a quaternion algebra defined over $K$.
We denote by $\Tr_\calA$ and $n_\calA$ the relative norm and trace maps from $\calA$ to $K$, that is, $\Tr_\calA(\alpha)=\alpha+\bar\alpha,\;\;n_\calA(\alpha)=\alpha\bar\alpha$ with $\bar\alpha$ the conjugate of $\alpha$ in $\calA$. For any place $\nu$ let $\calA_\nu=\calA\otimes_{\iota_{\nu}(K)} K_\nu$. We also denote by $\iota_\nu:\calA\hookrightarrow\calA_\nu$ and $\iota_S:\calA\hookrightarrow \prod_{\nu\in S}\calA_\nu$ the corresponding embeddings. We say that $\calA$ is ramified at a place $\nu$ if $\calA_\nu$ is a division algebra and unramified if it is a matrix algebra. Denote by $\Ram(\calA),\Ram_f(\calA)$ and $\Ram_\infty(\calA)$ the set of ramified places, finite ramified places and infinite ramified places respectively. By the classification theorem of quaternion algebras, the set $\Ram(\calA)$ is always finite and even, and conversely, for any even finite set of places there is a unique (up to isomorphism) quaternion algebra ramified at these places (cf. \cite[Chapter 3 Theorem 3.1]{Vigneras80}). If the algebra $\calA$ is fixed we will just use the notation $\Ram,\Ram_f,\Ram_\infty$.

An order $\calR\subset\calA$ is a subring satisfying that $\calR\otimes K=\calA$ and that $n_\calA(\calR),\Tr_\calA(\calR)\subset \calO_K$ are in the ring of integers of $K$. We say $\calR$ is a maximal order if it is maximal with respect to inclusion; it is called an Eichler order if it is the intersection of two maximal orders. We denote by $\calR^*$ the group of invertible elements in the order $\calR$ and by $\calR^1\subset\calR^*$ the group of norm one elements inside this order.
Let $S_\infty=P_\infty\setminus \Ram_\infty$,
we then have that
$$\iota_{P_\infty}(\ker(n_\calA)\subset\prod_{\nu\in \Ram_\infty} SU(2)\times \prod_{\nu\in S_\infty}\SL(2,\bbR).$$
Moreover, the projection of $\iota_{S_\infty}(\calR^1)$ to $\prod_{\nu\in S_\infty}\PSL(2,\bbR)$ is an irreducible lattice; this lattice is called the lattice derived from $\calR$.

\subsection{Proof of Theorem \ref{t:sign2}}
Let $\calR$ be an order in a quaternion algebra. For any $\alpha\in \calR^1$ and $\beta\in \calR^*$ we have that $\beta^{-1}\alpha\beta\in \calR^1$, hence, $\calR^*\subset N_{\calA^*}(\calR^1)$. Consequently,
if $\Gamma\subset G=\prod_{\nu\in S_\infty}\PSL(2,\bbR)$ is the lattice derived from $\calR$ and $\tilde{G}=\prod_{\nu\in S_\infty}\GL(2,\bbR)$,
then $\iota_{S_\infty}(\calR^*)\subset N_{\tilde G}(\Gamma)$.
It is easy to see that this is also true for any principal congruence group, that is, for
\[\Gamma(\g)=\{\gamma\in \Gamma|\gamma\equiv I\pmod \g\},\]
with $\g\subset \calO_K$ an ideal we have that $\iota_{S_\infty}(\calR^*)\subseteq N_{\tilde{G}}(\Gamma(\g))$.

Recall that the condition given in Proposition \ref{p:condition} for the invariance of the holonomy angles under sign change depends on the signs of the determinants of the elements in $N_{\tilde G}(\Gamma)$. If $\tau=\iota_{S_\infty}(\alpha)\in N_{\tilde G}(\Gamma)$ comes from the image of some $\alpha\in \calR^*$, then $\det(\tau)=\left(\iota_\nu(n_\calA(\alpha))\right)_{\nu\in S_\infty}$. Consequently, in order to prove Theorem \ref{t:sign2} it is sufficient to show that the elements $\{\iota_{S_\infty}(n_\calA(\alpha))|\alpha\in \calR^*\}\subset \bbR^{S_\infty}$ can get all possible signs.

The following proposition characterizes the elements of $\calO_K^*$ that are obtained as norms of elements in an Eichler order.
\begin{prop}\label{p:norm}
For a quaternion algebra $\calA$ defined over $K$ denote by
$K_\calA=\{x\in K|\iota_\nu(x)>0,\;\forall\nu\in\Ram_\infty(\calA)\}$. We then have
$n_\calA(\calA)=K_\calA$ and for any Eichler order $\calR\subseteq\calA$ we have $n_\calA(\calR)=\calO_K\cap K_\calA$ and $n_\calA(\calR^*)=\calO_K^*\cap K_\calA$.
\end{prop}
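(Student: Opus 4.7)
The plan is to prove each of the three equalities separately, establishing the elementary inclusion first and then obtaining the reverse inclusion via two global inputs: the Hasse--Schilling--Maass norm theorem and strong approximation for the norm-one group $\calA^1$.

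For the easy direction: at every $\nu \in \Ram_\infty$ the completion $\calA_\nu$ is isomorphic to the Hamilton quaternions $\bbH$, on which the reduced norm is a positive-definite quadratic form; hence $\iota_\nu(n_\calA(\alpha)) > 0$ for every nonzero $\alpha \in \calA$, which gives $n_\calA(\calA^*) \subseteq K_\calA$. Integrality $n_\calA(\calR) \subseteq \calO_K$ is built into the definition of an order, and $n_\calA(\calR^*) \subseteq \calO_K^*$ follows from $n_\calA(\alpha)n_\calA(\alpha^{-1}) = 1$. The first nontrivial equality $n_\calA(\calA) = K_\calA$ is the Hasse--Schilling--Maass theorem for quaternion algebras, which I would quote directly from \cite[Ch.~III]{Vigneras80}: an element of $K^*$ lies in the image of the reduced norm if and only if it is positive at every archimedean place where $\calA$ ramifies.

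For the Eichler order statements the strategy is local-to-global. The local input is that at every finite place $\nu$ one has $n_\calA(\calR_\nu) = \calO_{K_\nu}$ and $n_\calA(\calR_\nu^*) = \calO_{K_\nu}^*$; this is a routine check from the explicit local models of Eichler orders (a block order $\bigl(\begin{smallmatrix}\calO_{K_\nu} & \calO_{K_\nu} \\ \mathfrak{p}^e & \calO_{K_\nu}\end{smallmatrix}\bigr)$ inside $M_2(\calO_{K_\nu})$ at unramified finite places, and the unique maximal order $\{\alpha \in \calA_\nu : n_\calA(\alpha) \in \calO_{K_\nu}\}$ at ramified ones). Given $x \in \calO_K \cap K_\calA$, Hasse--Schilling--Maass produces $\alpha_0 \in \calA$ with $n_\calA(\alpha_0) = x$; by the local surjectivity, at each finite $\nu$ one picks $\gamma_\nu \in \calR_\nu$ with $n_\calA(\gamma_\nu) = x$ (a unit of $\calR_\nu$ when $x \in \calO_K^*$) and sets $\beta_\nu = \alpha_0^{-1}\gamma_\nu \in \calA_\nu^1$, so that $\alpha_0\beta_\nu = \gamma_\nu \in \calR_\nu$ (or $\calR_\nu^*$). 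Because $\calA$ is unramified at the nonempty set $S_\infty$ of archimedean places, strong approximation holds for $\calA^1$; the family $\{\beta_\nu\}$ can therefore be approximated arbitrarily closely by a single $\beta \in \calA^1$, and the openness of $\calR_\nu$ (resp.\ $\calR_\nu^*$) in $\calA_\nu$ then guarantees $\alpha_0\beta \in \calR$ (resp.\ $\alpha_0\beta \in \calR^*$), while the reduced norm is preserved since $n_\calA(\beta)=1$.

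The main obstacle is the unit refinement in the third equality: strong approximation must be applied in the open subset $\calR_\nu^*$ rather than merely in $\calR_\nu$, which is only legitimate because the local surjectivity from step one is available at the level of units. The Eichler condition (ensuring strong approximation for $\calA^1$) is supplied by the hypothesis $S_\infty \neq \emptyset$, where $\calA^1(K_\nu) \cong \SL(2,\bbR)$ is noncompact. With this in place, both $n_\calA(\calR) = \calO_K \cap K_\calA$ and $n_\calA(\calR^*) = \calO_K^* \cap K_\calA$ follow from the same local-to-global template.
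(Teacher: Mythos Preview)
Your argument is correct. For the first two assertions the paper simply cites Vign\'eras (Ch.~III, Theorem~4.1 and Corollary~5.9), whose proofs are exactly the Hasse--Schilling--Maass and strong approximation arguments you sketch, so there you are essentially reproducing the cited results. The genuine difference is in the third equality $n_\calA(\calR^*) = \calO_K^* \cap K_\calA$: you rerun the strong approximation machinery, approximating inside the open subsets $\calR_\nu^*$ rather than $\calR_\nu$. The paper instead deduces this from the second equality by a one-line algebraic observation: given $x \in \calO_K^* \cap K_\calA$, pick $\alpha \in \calR$ with $n_\calA(\alpha) = x$ (using $n_\calA(\calR) = \calO_K \cap K_\calA$), and note that $\alpha^{-1} = \bar\alpha \cdot x^{-1} \in \calR$ because $\bar\alpha \in \calR$ and $x^{-1} \in \calO_K$; hence $\alpha \in \calR^*$. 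Your route is uniform and self-contained; the paper's shortcut is quicker once the integral norm statement is in hand and avoids a second pass through the adelic machinery.
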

\begin{proof}
The first two assertions are proved in \cite[Chapter 3 Theorem 4.1 and Corollary 5.9]{Vigneras80}. We now verify the third one. The equality $n_\calA(\calR)=\calO_K\cap K_\calA$ implies that $n_\calA(\calR^*)\subseteq \calO_K^*\cap K_\calA$. For the other direction, let $x\in \calO_K^*\cap K_\calA$ so that $x=n_\calA(\alpha)$ for some $\alpha\in\calR$.  Let $y\in\calO_K^*$ with $xy=1$. Then $y\in K_\calA$ as well and hence $y=n_\calA(\beta)$ for some $\beta\in\calR$. Now $\alpha^{-1}=\bar\alpha n_\calA(\beta)\in\calR$ so that indeed $\alpha\in\calR^*$.
\end{proof}

Let $\sgn:K^*\to\{\pm1\}^{P_\infty}$ denote the sign function, that is, $\sgn(t)=(\frac{\iota_\nu(t)}{|\iota_\nu(t)|})_{\nu\in P_\infty}$.
Let $K^+=\ker(\sgn)$ denote the group of totally positive elements and let $\calO_K^+=\calO_K^*\cap K^+$ denote the group of totally positive units.
Let $\calI(K)$ denote the group of fractional ideals in $K$, $\calP(K)$ the group of principal ideals, and $H(K)=\calI(K)/\calP(K)$ the class group of $K$. Let $\calP^+(K)$ denote the group of principal ideals generated by totally positive elements and denote by $H^+(K)=\calI(K)/\calP^+(K)$ the narrow class group. Denote by $h_K=\#H(K)$ and $h_K^+=\#H^+(K)$ the class number and narrow class number respectively.
\begin{prop}
$\sgn(\calO_K^*)=\{\pm1\}^{P_\infty}$ if and only if $h_K=h_K^+$.
\end{prop}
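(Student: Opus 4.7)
The plan is to identify $\calP(K)/\calP^+(K)$ with $\{\pm 1\}^{P_\infty}/\sgn(\calO_K^*)$, and then deduce the equivalence from the short exact sequence
\[
1 \To \calP(K)/\calP^+(K) \To H^+(K) \To H(K) \To 1,
\]
obtained by comparing the quotients of $\calI(K)$ by $\calP^+(K)$ and by $\calP(K)$. Taking orders already yields $h_K^+ = h_K\cdot |\{\pm 1\}^{P_\infty}/\sgn(\calO_K^*)|$, so the two conditions $h_K=h_K^+$ and $\sgn(\calO_K^*)=\{\pm 1\}^{P_\infty}$ become equivalent.

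To establish the identification, I would first use that the map $x\mapsto (x)$ gives an isomorphism $K^*/\calO_K^*\xrightarrow{\sim}\calP(K)$. A principal ideal $(x)$ lies in $\calP^+(K)$ precisely when $(x)=(y)$ for some $y\in K^+$, i.e.\ when $x=uy$ with $u\in\calO_K^*$ and $y\in K^+$, and this in turn is equivalent to $\sgn(x)\in\sgn(\calO_K^*)$. Hence $\calP^+(K)$ corresponds under the above isomorphism to the subgroup $(K^+\cdot\calO_K^*)/\calO_K^*$, giving
\[
\calP(K)/\calP^+(K) \;\cong\; K^*/(K^+\cdot\calO_K^*).
\]

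Since $K$ is totally real, weak approximation at the archimedean places makes $\sgn:K^*\to\{\pm 1\}^{P_\infty}$ surjective, and its kernel is $K^+$ by definition. The induced isomorphism $K^*/K^+\xrightarrow{\sim}\{\pm 1\}^{P_\infty}$ then carries $(K^+\cdot\calO_K^*)/K^+$ onto $\sgn(\calO_K^*)$, yielding
\[
\calP(K)/\calP^+(K)\;\cong\;\{\pm 1\}^{P_\infty}/\sgn(\calO_K^*),
\]
which finishes the proof. There is no substantial obstacle; the only non-formal input is weak approximation, used to guarantee surjectivity of the sign map on $K^*$.
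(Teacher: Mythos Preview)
Your argument is correct and slightly more structural than the paper's. The paper proves the two implications directly: assuming $h_K=h_K^+$, it picks an arbitrary sign vector $\sigma$, uses weak approximation to find $x\in K^*$ with $\sgn(x)=\sigma$, and then uses the narrow-class assumption to adjust $x$ by a totally positive element into $\calO_K^*$; conversely, assuming all signs are realized by units, it shows that any ideal is narrowly equivalent to one of a chosen set of $h_K$ wide-class representatives by multiplying the principal generator by a unit of the right sign. Your route via the exact sequence $1\to\calP(K)/\calP^+(K)\to H^+(K)\to H(K)\to 1$ and the identification $\calP(K)/\calP^+(K)\cong\{\pm1\}^{P_\infty}/\sgn(\calO_K^*)$ is cleaner and yields the sharper quantitative statement $h_K^+=h_K\cdot[\{\pm1\}^{P_\infty}:\sgn(\calO_K^*)]$, from which the equivalence is immediate. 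Both proofs rely on the same non-formal input, namely surjectivity of $\sgn$ on $K^*$ via weak approximation; the paper's version is more hands-on, while yours packages the same manipulations into an index computation.
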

\begin{proof}
Let $\g_1,\g_2,\ldots,\g_{h_K}$ be a set of representatives for all ideal classes; we may assume that $\g_1=(1)$. We then have that $h_K=h_K^+$ if and only if these ideals represent all classes in the narrow class group as well.

Assume that $h_K=h_K^{+}$. We fix $\sigma\in\{\pm 1\}^{P_\infty}$ and show that there is $u\in\calO_K^*$ with $\sgn(u)=\sigma$. Let $x\in K^*$ with $\sgn(x)=\sigma$ (such an element exists from the weak approximation theorem). From our assumption, the ideal $x\calO_K$ is equivalent, in the narrow sense, to $\g_1=(1)$. This implies that there is $y\in K^+$ such that $u=xy\in \calO_K^*$, and hence $\sigma=\sgn(u)\in\sgn(\calO_K^*)$.

For the other direction we assume that $\sgn(\calO_K^*)=\{\pm 1\}^{P_\infty}$ and show that $\g_1,\ldots,\g_{h_K}$ represent all narrow classes. Fix an ideal $\g$, then
there is some $x\in K^*$ such that $x\g=\g_j$ for some $1\leq j\leq h_K$. Let $\sigma=\sgn(x)$ and fix $u\in \calO_K^*$ with $\sgn(u)=\sigma$ (which exist from our assumption). We thus have that $y=xu\in K^+$ and $y\g=xu\g=x\g=\g_i$ so $\g$ is equivalent to $\g_i$ in the narrow sense.

\end{proof}
\begin{rem}
Note that the group $(\calO_K^*)^2$ of squares of units is always contained in the positive unites $\calO_K^+$. On the other hand, Dirichlet's Unit Theorem implies that $[\calO_K^*:(\calO_K^*)^2]=2^n$. Hence the condition $h_K=h_K^{+}$ is also equivalent to $\calO_K^+=(O_K^*)^2$.
\end{rem}

The following proposition together with Proposition \ref{p:condition} complete the proof of Theorem \ref{t:sign2}.
\begin{prop}
Let $\Gamma\subset \PSL(2,\bbR)^{n+1}$ be a principal congruence group inside a lattice derived from an Eichler order $\calR$ in a quaternion algebra $\calA$ defined over a number field $K$. If $h_K=h_K^+$, then  $\sgn_0(\det(N_{\tilde{G}}(\Gamma))=\{\pm 1\}^n$.
\end{prop}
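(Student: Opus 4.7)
The plan is to string together the two propositions immediately preceding this statement. The first, $n_\calA(\calR^*) = \calO_K^* \cap K_\calA$, converts a question about norms of units of the order into a question about totally-positive-at-ramified-places units of $K$. The second, $h_K = h_K^+ \Leftrightarrow \sgn(\calO_K^*) = \{\pm 1\}^{P_\infty}$, lets me prescribe signs of such units freely at every infinite place. The only ``new'' content is a careful bookkeeping of which infinite places correspond to which coordinate of $\sgn_0$.

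Concretely, I would start from the inclusion $\iota_{S_\infty}(\calR^*)\subseteq N_{\tilde G}(\Gamma)$ established at the top of this subsection (valid also for principal congruence subgroups). For $\alpha\in\calR^*$ and $\tau=\iota_{S_\infty}(\alpha)$, one has
\[
\det(\tau) = \bigl(\iota_\nu(n_\calA(\alpha))\bigr)_{\nu\in S_\infty}\in\bbR^{S_\infty},
\]
so the sign vector $\sgn_0(\det\tau)\in\{\pm1\}^n$ is read off the signs of $n_\calA(\alpha)$ at the last $n$ places of $S_\infty$. Hence it suffices to show that the composite
\[
\calR^* \xrightarrow{n_\calA} \calO_K\cap K_\calA \xrightarrow{\sgn_0} \{\pm1\}^n
\]
is surjective.

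By the previous proposition, $n_\calA(\calR^*)=\calO_K^*\cap K_\calA$, so I must show that every $\sigma\in\{\pm1\}^n$ is realized as $\sgn_0$ of some unit $u\in\calO_K^*\cap K_\calA$. Given $\sigma$, I prescribe the full sign vector in $\{\pm1\}^{P_\infty}$ to be $+1$ on every ramified archimedean place (ensuring membership in $K_\calA$), $+1$ on the distinguished first place of $S_\infty$, and $\sigma_j$ on the $j$-th of the remaining $n$ places. By the hypothesis $h_K=h_K^+$ and the preceding proposition, the sign map $\sgn:\calO_K^*\to\{\pm1\}^{P_\infty}$ is surjective, so a unit $u$ with these signs exists; by construction $u\in K_\calA$, and its image under $\sgn_0$ is exactly $\sigma$.

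This proof is essentially bookkeeping once the two preparatory propositions are in hand, so I do not expect any substantive obstacle. The one thing to watch is that for principal congruence subgroups $\G(\g)\subset\G$ the inclusion $\iota_{S_\infty}(\calR^*)\subseteq N_{\tilde G}(\G(\g))$ indeed holds; this was asserted at the start of the subsection and follows because conjugation by $\calR^*$ preserves the congruence condition modulo $\g$, so the argument applies uniformly whether $\Gamma$ is the full derived lattice or a principal congruence subgroup of it.
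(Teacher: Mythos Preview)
Your proof is correct and follows essentially the same approach as the paper: both arguments use the inclusion $\iota_{S_\infty}(\calR^*)\subseteq N_{\tilde G}(\Gamma)$, invoke $n_\calA(\calR^*)=\calO_K^*\cap K_\calA$ to reduce to a question about signs of units, and then use the equivalence $h_K=h_K^+ \Leftrightarrow \sgn(\calO_K^*)=\{\pm1\}^{P_\infty}$ to realize any prescribed sign pattern that is $+1$ at the ramified infinite places. The only cosmetic difference is that the paper leaves the sign at $\nu_0$ unspecified (since $\sgn_0$ ignores it anyway), whereas you fix it to $+1$; either choice works.
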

\begin{proof}
Let $d=[K:\bbQ]$ and enumerate the infinite places $P_\infty=\{\nu_0,\nu_1,\ldots,\nu_n,\nu_{n+1},\ldots,\nu_d\}$ such that $\Ram_\infty(\calA)=\{\nu_{n+1},\ldots,\nu_d\}$.
Fix $\sigma\in\{\pm 1\}^n$ and let $x\in\calO_K^*$ such that $\sgn(x)_{\nu_i}=\sigma_i$ for $i=1,\ldots ,n$ and that $\sgn(x)_{\nu_i}=1$ for $i>n$ (which exists from the assumption on the class numbers). Then $x\in \calO_{K}^*\cap K_\calA=n_\calA(\calR^*)$ so there is $\alpha\in\calR^*$ with $n_\calA(\alpha)=x$. Then  $\iota_{S_\infty}(\alpha)\in \GL(2,\bbR)^{n+1}$ lies in $N_{\tilde{G}}(\Gamma)$ and $\sgn_0\det(n_\calA(\alpha))=\sigma$.
\end{proof}
\begin{rem}
Notice that if we fix the quaternion algebra $\calA$ we can weaken the condition on the field. That is, we only require that the projection of $\sgn(\calO_K^*)$ to $\prod_{P_\infty\setminus \Ram_\infty(\calA)}\{\pm 1\}$ is onto.
\end{rem}

\subsection{Optimal Embeddings}
Before we proceed with the proof of Theorem \ref{t:Correspondence} we collect some results on optimal embeddings of quadratic orders into orders in a quaternion algebra.
Let $\calA$ be a quaternion algebra defined over a number field $K$ and $\calR\subset\calA$ a maximal order (similar results also hold for Eichler orders, but for simplicity we will restrict the discussion to maximal orders). We further assume that $\calA$ satisfies the Eichler condition, that is, $\calA$ is unramified in at least one infinite place.

Let $L/K$ be a quadratic extension, we say that $L$ embeds into $\calA$ if there is a nontrivial $K$-homomorphism from $L$ to $\calA$. We note that this happens if and only if $L_\nu$ is a quadratic field extension of $K_\nu$ for any $\nu\in \Ram(\calA)$ (cf. \cite[Chapter III Theorem 4.1]{Vigneras80}). Let $\calO_L$ denote the ring of algebraic integers in $L$ and let $\calO\subset\calO_L$ be an order (i.e., a subring satisfying that $\calO\otimes K=L$). We say that an embedding $\phi:L \into \calA$ is an optimal embedding of $\calO$ in $\calR$ if $\phi(L)\cap \calR=\phi(\calO)$. Note that if $\phi$ is an optimal embedding, then for any $x\in N_{\calA}(\calR)$ the map $\phi_x(u):=x^{-1}\phi(u) x$ is also an optimal embedding.
\begin{defn}
Denote by $m^*(\calO,\calR)$ the number of optimal embeddings of $\calO$ into $\calR$ modulo conjugation by elements of $\calR^*$,
and by $m^1(\calO,\calR)$ the number of optimal embeddings of $\calO$ into $\calR$ modulo conjugation by elements of $\calR^1$.
\end{defn}

For any finite place $\nu\in P_f$ let $\calO_\nu=\calO\otimes_{\calO_K} \calO_{K_\nu}$, let $\calR_\nu=\calR\otimes_{\calO_K} \calO_{K_\nu}$, and let $m^*(\calO_\nu,\calR_\nu)$ denote the number of optimal embeddings of $\calO_\nu$ in $\calR_\nu$ modulo $\calR_\nu^*$. For places where $\calA$ is unramified, $\calA_\nu=\Mat(2,K_\nu)$ and $m^*(\calO_\nu,\calR_\nu)=1$ \cite[Chapter II Theorem 3.2]{Vigneras80}. For $\nu\in\Ram(\calA)$, $\calA_\nu$ is a division algebra and by \cite[Chapter II Theorem 3.1]{Vigneras80} $m^*(\calO_\nu,\calR_\nu)=0$ unless $\calO_\nu=\calO_{L_\nu}$ in which case
\begin{equation}\label{e:optimallocal}m^*(\calO_{L_\nu},\calR_\nu)=\left\lbrace\begin{array}{cc} 1 & \nu \mbox{ ramifies in } L \\ 2 & \nu \mbox{ is unramified in } L\end{array}\right..
\end{equation}

The global number of optimal embeddings can be obtained from this local data as follows:
\begin{prop}\label{p:optimal}
Let $\calA$ be a quaternion algebra over $K$ and $\calR\subset\calA$ a maximal order.
Let $L/K$ be a quadratic extension embedded in $\calA$ and $\calO\subset\calO_L$ an order.
If $\calA=\Mat(2,K)$ we further assume that there is an infinite place with $L_\nu=\bbC$. Then
\[m^1(\calO,\calR)=\frac{h(\calO)}{h_K}[\calO_K^*:n_{L/K}(\calO^*)]2^{-r_\calA}\prod_{\nu\in \Ram_f(\calA)}m^*(\calO_\nu,\calR_\nu),\]
where $r_\calA=\#\Ram_\infty(\calA)$, $h_K$ is the class number of $K$ and $h(\calO)$ is the class number of $\calO$.
\end{prop}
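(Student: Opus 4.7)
The plan is to combine Eichler's classical mass formula for optimal embeddings into maximal orders with an orbit-counting step that converts counts modulo $\calR^*$ into counts modulo $\calR^1$.

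First I would establish the elementary identity
\[ m^1(\calO,\calR) \;=\; m^*(\calO,\calR)\cdot\bigl[\calO_K^*\cap K_\calA : n_{L/K}(\calO^*)\bigr]. \]
If $\phi\colon L\into\calA$ is an optimal embedding of $\calO$ into $\calR$, then the centralizer of $\phi(L)$ in $\calA^*$ is $\phi(L^*)$, so the $\calR^*$-stabilizer of $\phi$ (acting by conjugation) is $\phi(L^*)\cap\calR^*=\phi(\calO^*)$ by optimality. Consequently the $\calR^*$-orbit of $\phi$ contains exactly $[\calR^*:\calR^1\phi(\calO^*)]$ distinct $\calR^1$-orbits. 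The reduced norm $n_\calA$ has kernel $\calR^1$ on $\calR^*$, image $\calO_K^*\cap K_\calA$ by Proposition \ref{p:norm}, and satisfies $n_\calA\circ\phi=n_{L/K}$ since $\phi$ intertwines the canonical involutions, so this index equals $[\calO_K^*\cap K_\calA : n_{L/K}(\calO^*)]$.

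Next I would invoke Eichler's mass formula for optimal embeddings into maximal orders (see \cite{Vigneras80}, Ch.\ III \S 5), which asserts that summed over representatives $\calR_1,\ldots,\calR_t$ of the $\calA^*$-conjugacy classes of maximal orders,
\[ \sum_{i=1}^t m^*(\calO,\calR_i) \;=\; h(\calO)\prod_{\nu\in \Ram_f}m^*(\calO_\nu,\calR_\nu). \]
Under the Eichler condition, strong approximation for $\calA^1$ yields that $m^*(\calO,\calR_i)$ is independent of $i$, and using the reduced norm to compute $t$ via an id\`ele class group computation one gets $t = h_K\cdot 2^{r_\calA}/[\calO_K^*:\calO_K^*\cap K_\calA]$. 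Substituting and combining with the first step, one notes that $n_{L/K}(\calO^*)\subseteq\calO_K^*\cap K_\calA$ (at each $\nu\in\Ram_\infty(\calA)$ the extension $L_\nu/K_\nu$ is $\bbC/\bbR$, whose norms are positive), so the indices telescope as
\[[\calO_K^*\cap K_\calA : n_{L/K}(\calO^*)]\cdot [\calO_K^*:\calO_K^*\cap K_\calA]=[\calO_K^*:n_{L/K}(\calO^*)],\]
which yields the asserted formula.

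The main obstacle is the type-number formula for $t$ together with the independence of $m^*(\calO,\calR_i)$ on $i$; both rely on strong approximation for $\calA^1$ and on a careful analysis of the image of $n_\calA$ on the id\`ele class group of $K$, where one must correctly identify the subgroup $K^*\cdot \hat{\calO}_K^*\cdot n_\calA(\calA^*(\bbR))$. The additional hypothesis that $L_\nu=\bbC$ for some infinite $\nu$ in the split case $\calA=\Mat(2,K)$ ensures that $[\calO_K^*:n_{L/K}(\calO^*)]$ is finite and that the orbit-counting step remains well posed when strong approximation for $\calA^1$ must be replaced by its weak analog.
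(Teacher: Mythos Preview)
Your proposal is correct and follows essentially the same route as the paper. The paper cites Vign\'eras directly for the two ingredients you reconstruct: it takes $m^1(\calO,\calR)=m^*(\calO,\calR)\,[n_\calA(\calR^*):n_{L/K}(\calO^*)]$ from \cite[Ch.~III, Cor.~5.13]{Vigneras80} (your orbit--stabilizer argument) and $m^*(\calO,\calR)=\dfrac{h(\calO)}{h_{K_\calA}}\prod_{\nu\in\Ram_f}m^*(\calO_\nu,\calR_\nu)$ from \cite[Ch.~III, Thm.~5.15 and Prop.~5.16]{Vigneras80} (your mass formula divided by the type number, which under the Eichler condition equals $h_{K_\calA}$). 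The paper then computes $h_{K_\calA}=2^{r_\calA}h_K/[\calO_K^*:n_\calA(\calR^*)]$ by identifying the kernel of the natural surjection $H(K_\calA)\to H(K)$ with $(K^*/K_\calA)\big/(\calO_K^*/n_\calA(\calR^*))$, which is precisely your type-number computation repackaged; the index telescoping is then the same as yours since $n_\calA(\calR^*)=\calO_K^*\cap K_\calA$ by Proposition~\ref{p:norm}.

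One minor comment: your final paragraph's rationale for the extra hypothesis in the split case $\calA=\Mat(2,K)$ is not quite on target. Strong approximation for $\calA^1=\SL_2$ holds over any number field, so that is not the issue; the hypothesis is rather a standing assumption in the relevant results of \cite[Ch.~III, \S5]{Vigneras80} (it guarantees in particular that the relevant embedding counts are finite and the formula is well posed). The paper does not elaborate on this point either and simply cites Vign\'eras, so your proof is not weaker in this respect.
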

\begin{proof}
Let $H(K)$ denote the class group of $K$ and $H(K_\calA)$ the narrow class group corresponding to $K_\calA$
(that is the quotient of the fractional ideals by the principal ideals generated by elements of $K_\calA$). Let $h_K$ and $h_{K_\calA}$ denote the corresponding class number and narrow class number.
By \cite[Chapter III Proposition 5.16]{Vigneras80}  $m^*(\calO,\calR)$ does not depend on the choice of maximal order $\calR$, and by \cite[Chapter III Theorem 5.15]{Vigneras80} it is given by
\begin{equation}\label{e:optimal1}
m^*(\calO,\calR)=\frac{h(\calO)}{h_{K_\calA}}\prod_{\nu\in \Ram_f(\calA)}m^*(\calO_\nu,\calR_\nu).
\end{equation}
By \cite[Chapter III Corollary 5.13]{Vigneras80} we have
\begin{equation}\label{e:optimal2}
m^1(\calO,\calR)=m^*(\calO,\calR)[n_\calA(\calR^*):n_{L/K}(\calO^*)].
\end{equation}
Combining (\ref{e:optimal1}) and $(\ref{e:optimal2})$ we get
\begin{equation}\label{e:optimal3}
m^1(\calO,\calR)=\frac{[\calO_K^*:n_{L/K}(\calO^*)]}{[\calO_K^*:n_\calA(\calR^*)]}\frac{h(\calO)}{h_{K_\calA}}\prod_{\nu\in \Ram_f(\calA)}m^*(\calO_\nu,\calR_\nu).
\end{equation}

We can identify the principal fractional ideals in $K^*$ with $K^*/\calO_K^*$.
Hence, the kernel of the natural projection $\pi:H(K_\calA)\to H(K)$ satisfies
$$\ker(\pi)\cong (K^*/\calO_K^*)/(K_\calA/\calO_K^*\cap K_\calA)=(K^*/\calO_K^*)/(K_\calA/n_\calA(\calR^*),$$
which is a finite group of order
$$\#\ker(\pi)=\frac{[K^*:K_\calA^*]}{[\calO_K^*:n_\calA(\calR^*)]}=\frac{2^{r_\calA}}{[\calO_K^*:n_\calA(\calR^*)]}.$$
Now plug in
$h_{K_\calA}=\#\ker(\pi) h_{K}=2^{r_\calA}\frac{h_K}{[\calO_K^*:n_\calA(\calR^*)]}$
in (\ref{e:optimal3}) to get  the result.
\end{proof}

The number of conjugacy classes in $\Gamma$ with a given trace can be expressed as a sum over the number of optimal embeddings of certain quadratic orders.
\begin{defn}\label{d:ODd}
For any quadratic extension $L=K(\sqrt{D})$ of $K$ (with $D\in K$ not a square) and any ideal $d$ in $\calO_K$ let $\calO_{D,d}\subset\calO_L$ be the order with relative discriminant $d$, that is
\[\calO_{D,d}=\set{\frac{t+u\sqrt{D}}{2}\in\calO_K,\;\big{|}\;d|(u^2D)}.\]
(Note that this definition does not depend on the choice of $D$ modulo squares in $K^*$.)
\end{defn}
\begin{lem}\label{l:count1}
Let $\Gamma$ be a lattice derived from an order $\calR$ in a quaternion algebra $\calA$ over $K$ and let $S=P_\infty\setminus\Ram_\infty(\calA)$.
Let $t\in\calO_K$ and let $D=t^2-4$. Assume that $D\neq 0$, then
\[\sharp\{\{\gamma\}\in\Gamma^\#:\Tr(\gamma)=\pm\iota_{S}(t)\}=\sum_{d||(D)}m^1(\calO_{D,d},\calR),\]
where the notation $d||(D)$ means that $(D)=d\f^2$ for $\f\subseteq\calO_K$ an ideal.

\end{lem}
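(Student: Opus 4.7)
The plan is to set up an explicit bijection, equivariant for $\calR^1$-conjugation, between $\calR^1$-conjugacy classes of elements $\tilde\gamma \in \calR^1$ with $\Tr_\calA(\tilde\gamma) = t$ and the disjoint union over $d \| (D)$ of $\calR^1$-equivalence classes of optimal embeddings of $\calO_{D,d}$ into $\calR$, and then descend from $\calR^1$-conjugacy to $\Gamma$-conjugacy.

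For the direct correspondence, given $\tilde\gamma \in \calR^1$ with $\Tr_\calA(\tilde\gamma) = t$, it satisfies $\tilde\gamma^2 - t\tilde\gamma + 1 = 0$, so $K[\tilde\gamma] \subset \calA$ is a copy of $L = K(\sqrt D)$, with $D = t^2 - 4$, under the embedding $\phi_{\tilde\gamma} : L \hookrightarrow \calA$ sending $\tfrac{t + \sqrt D}{2} \mapsto \tilde\gamma$. Conversely, any embedding $\phi : L \hookrightarrow \calA$ sending $\tfrac{t + \sqrt D}{2}$ into $\calR$ produces $\tilde\gamma := \phi(\tfrac{t + \sqrt D}{2}) \in \calR$ of trace $t$ and reduced norm $1$, hence in $\calR^1$. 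These two constructions are manifestly inverse to one another and equivariant under conjugation by $\calR^1$.

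Next I would check that each $\phi_{\tilde\gamma}$ is an optimal embedding of an order of the form $\calO_{D,d}$ with $d \| (D)$. The intersection $\calO := \phi_{\tilde\gamma}^{-1}(\calR \cap \phi_{\tilde\gamma}(L)) \subset \calO_L$ is an $\calO_K$-order for which $\phi_{\tilde\gamma}$ is, by construction, an optimal embedding. Since $\tilde\gamma \in \calR$, the order $\calO$ contains the suborder $\calO_K[\tfrac{t+\sqrt D}{2}]$ of relative discriminant $(D)$ over $\calO_K$; hence if $d$ denotes the relative discriminant of $\calO$, the ratio $(D)/d = [\calO : \calO_K[\tfrac{t+\sqrt D}{2}]]^2$ is a square ideal, so $d \| (D)$. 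Since an $\calO_K$-order in $L$ is determined by its discriminant, $\calO = \calO_{D,d}$, and hence the right-hand side of the bijection has cardinality $\sum_{d \| (D)} m^1(\calO_{D,d}, \calR)$.

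Finally, to pass from $\calR^1$-conjugacy to $\Gamma = \calR^1/\{\pm I\}$-conjugacy, observe that $-I$ is central, so each $\Gamma$-conjugacy class with $\Tr(\gamma) = \pm\iota_S(t)$ has precisely two $\calR^1$-conjugacy class lifts $\{\tilde\gamma\}, \{-\tilde\gamma\}$, of respective traces $t$ and $-t$; selecting the lift with trace $+t$ gives a bijection with $\calR^1$-conjugacy classes of elements of trace $t$, concluding the proof. The main subtlety lies in the discriminant step, verifying that $\calR \cap \phi_{\tilde\gamma}(L)$ matches precisely the order $\calO_{D,d}$ of Definition \ref{d:ODd} and that $d \| (D)$ rather than merely $d \mid (D)$; this is a straightforward index computation but must be handled carefully.
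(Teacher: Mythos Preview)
Your proposal is correct and follows essentially the same approach as the paper's proof: both reduce to counting $\calR^1$-conjugacy classes of elements of trace $t$, associate to each such $\alpha$ the embedding $\phi_\alpha:L=K(\sqrt{D})\hookrightarrow\calA$ via $\frac{t+\sqrt D}{2}\mapsto\alpha$, identify $\phi_\alpha^{-1}(\calR\cap\phi_\alpha(L))$ as an order $\calO_{D,d}$ with $d\|(D)$, and observe that this sets up an $\calR^1$-equivariant bijection with optimal embeddings. The only cosmetic difference is that the paper packages the discriminant computation by introducing the fractional ideal $\mathfrak g_\alpha=\{u\in L:\exists x\in K,\,x+u\alpha\in\calR\}$ and setting $d_\alpha=\mathfrak g_\alpha^2 D$ (citing \cite{KelmerSarnak09} for the verification that $d_\alpha,\mathfrak g_\alpha^{-1}\subset\calO_K$), whereas you phrase the same content as the index identity $(D)/d=[\calO:\calO_K[\tfrac{t+\sqrt D}{2}]]^2$; these are equivalent formulations of the same fact.
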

\begin{proof}
Any $\gamma\in\Gamma$ comes from the projection of $\iota_S(\alpha)$ for some $\alpha\in\calR^1$. Note that both $\alpha$ and $-\alpha$ give the same element in $\Gamma$ so we can count the number of conjugacy classes in $\calR^1$ with trace equal to $t$. For $\alpha\in \calR^1$ let $t_\alpha=\Tr_\calA(\alpha)\in K$ and $D_\alpha=t_\alpha^2-4$. When $\alpha$ is not in the center, $L=K(\sqrt D_\alpha)$ is a quadratic extension that embeds in $\calA$ via $\phi_\alpha(\sqrt{D_\alpha})=2\alpha-t_\alpha$. Let $\g_\alpha=\{u\in L|\exists x\in K,\;x+u\alpha\in\calR\}$ and $d_\alpha=\g_\alpha^2D_\alpha$. Then $d_\alpha,\g_\alpha^{-1}\subset\calO_K$ are integral ideals (so  $d_\alpha|| (D_\alpha)$) and $\phi_\alpha(\calO_{D_\alpha,d_\alpha})=\calR^1\cap \phi(L)$ is an optimal embedding (cf. \cite[Lemma 4.2 and Proposition 4.3]{KelmerSarnak09}). Now
\begin{eqnarray*}
\sharp\{ \{\alpha\}|\Tr(\alpha)=t\}=\sum_{d||(D)}\sharp\{\{\alpha\}|\Tr(\alpha)=t,\;d_\alpha=d\}
\end{eqnarray*}
We thus need to show that $$\sharp\{\{\alpha\}|\Tr(\alpha)=t,\;d_\alpha=d\}=m^1(\calO_{D,d},\calR).$$
For any $\alpha$ with $t_\alpha=t$ and $d_\alpha=d$ we get an optimal embedding $\phi_\alpha$ as above (and conjugating $\alpha$ gives a conjugate embedding). On the other hand, given an optimal embedding $\phi:\calO_{D,d}\to\calA$ we have that $\alpha=\phi(\frac{t+\sqrt{D}}{2})$ is the unique element in the image with trace $t_\alpha=t$ and $d_\alpha=d$.
\end{proof}

\subsection{Proof of Theorem \ref{t:Correspondence}}\label{s:geometric}
Let $\Gamma\subset\PSL(2,\bbR)^{n+1}$ be a lattice derived form a maximal order $\calR$ in a quaternion algebra $\calA$ over a number field $K$. This means that $\calA$ is unramified in $n+1$ infinite places, that we denote by $S=\{\nu_0,\nu_1,\ldots,\nu_n\}$. Further assume that $n$ is even. We can thus find another quaternion algebra $\tilde\calA$ such that $\Ram_f(\tilde \calA)=\Ram_f( \calA)$ and that $\Ram_\infty(\tilde\calA)=\calP_\infty\setminus\{\nu_0\}$. Let $\tilde{S}=\{\nu_1,\ldots, \nu_n\}$.
Let $\tilde\Gamma\subset\PSL(2,\bbR)$ be a lattice derived from a maximal order $\tilde\calR\subset\tilde \calA$.

Recall the correspondence between closed geodesics and conjugacy classes. Each closed geodesic $C$ in $\Sigma_0$ corresponds to a conjugacy class of some hyperbolic-elliptic $\gamma\in\Gamma$. Any such $\gamma$ is the projection to $\PSL(2,\bbR)^{n+1}$ of $\iota_{S}(\alpha)$ for some $\alpha\in\calR^1$ with trace $t=\Tr_\calA(\alpha)\in\calO_K$ satisfying that $|\iota_{\nu_j}(t)|<2$ for $\nu_j\in \tilde{S}$. The embeddings $\iota_{\nu_j}(t)\in\bbR$ correspond to the holonomy angles $\theta_{C,j}$ by $|\iota_{\nu_j}(t)|=|2\cos(\theta_{C,j}/2)|,$ for $j=1,\ldots,n$. The additional embedding $\iota_{\nu_0}(t)$ satisfies $|\iota_{\nu_0}(t)|=2\cosh(l_C/2)$.

On the other hand, a closed geodesic in $T^1\tilde \calM$ corresponds to a conjugacy class of some hyperbolic  $\gamma\in\tilde \Gamma$. Any such $\gamma$ is the projection to $\PSL(2,\bbR)$ of $\iota_{\nu_0}(\alpha)\in\SL(2,\bbR)$ for some $\alpha\in\tilde\calR^1$ and the length of the corresponding geodesic satisfies $|\iota_{\nu_0}(\Tr_\calA(\alpha))|=2\cosh(l_{\gamma}/2)$. Let $u_\gamma\in\PSU(2)^n$ be the projection of $\iota_{\tilde{S}}(\alpha)\in\prod_{\nu\in \tilde{S}}\calR_\nu^1\subset \SU(2)^n$.
For any closed geodesic $\{\gamma\}$ in $\tilde{\calM}=\tilde\G\bs \bbH$ we attache the conjugacy class of $u_\gamma$ in $\PSU(2)^n$. Theorem \ref{t:Correspondence} is thus equivalent to the following proposition:
\begin{prop}\label{p:correspondence1}
Let $t\in \calO_K$. Then  $\iota_{\nu_0}(t)\in \Tr(\tilde\Gamma)$ if and only if $\iota_{S_\infty}(t)\in \Tr(\Gamma)$ and $|\iota_\nu(t)|<2$ for all $\nu\in \tilde{S}$. Moreover, for any $t\in\calO_K$ such that $\iota_{\nu_0}(t)\in\Tr(\tilde\Gamma)$
$$\sharp\{\{\gamma\}\in \Gamma^\#|\Tr(\gamma)=\pm \iota_{S_\infty}(t)\}=2^n\sharp\{\{\gamma\}\in \tilde \Gamma^\#|\Tr(\gamma)=\pm\iota_{\nu_0}(t)\}.$$
\end{prop}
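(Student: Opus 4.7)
The plan is to deduce both parts of the proposition from Lemma \ref{l:count1} combined with the product formula of Proposition \ref{p:optimal}, exploiting that $\calA$ and $\tilde\calA$ have identical finite ramification and differ in infinite ramification exactly by the set $\tilde S$.

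Set $D=t^2-4$ and $L=K(\sqrt{D})$; the degenerate case $D=0$ corresponds to central elements of $\Gamma$ and $\tilde\Gamma$ and can be disposed of separately. The key local fact is that a quadratic extension $L/K$ embeds into a quaternion algebra $\calB$ over $K$ if and only if $L_\nu/K_\nu$ is a quadratic field extension at every $\nu\in\Ram(\calB)$; at a real place this is equivalent to $|\iota_\nu(t)|<2$, while at a finite place it depends only on $D$ and the local division algebra. Since $\Ram_f(\tilde\calA)=\Ram_f(\calA)$ and $\Ram_\infty(\tilde\calA)=\Ram_\infty(\calA)\sqcup\tilde S$, this yields the purely local observation
\[
L\hookrightarrow\tilde\calA\iff L\hookrightarrow\calA\ \text{and}\ |\iota_\nu(t)|<2\ \text{for all}\ \nu\in\tilde S.
\]
Since $\iota_{\nu_0}(t)\in\Tr(\tilde\Gamma)$ amounts to the existence of a noncentral $\tilde\alpha\in\tilde\calR^1$ with this trace, and hence to $L\hookrightarrow\tilde\calA$ together with the finite-local admissibility conditions shared by $\calA$ and $\tilde\calA$ (and analogously for $\Gamma$), this observation immediately yields the claimed biconditional.

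For the quantitative identity, I would apply Lemma \ref{l:count1} to rewrite
\[
\sharp\{\{\gamma\}\in\Gamma^\#:\Tr(\gamma)=\pm\iota_{S_\infty}(t)\}=\sum_{d\,\|\,(D)}m^1(\calO_{D,d},\calR),
\]
and similarly with $(\Gamma,\calR)$ replaced by $(\tilde\Gamma,\tilde\calR)$. It then suffices to verify, divisor-by-divisor, the local identity $m^1(\calO_{D,d},\calR)=2^n\,m^1(\calO_{D,d},\tilde\calR)$. Inserting the formula from Proposition \ref{p:optimal}, the factors $h(\calO_{D,d})/h_K$ and $[\calO_K^*:n_{L/K}(\calO_{D,d}^*)]$ depend only on the quadratic order and so are common to both sides. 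The product $\prod_{\nu\in\Ram_f}m^*(\calO_\nu,\calR_\nu)$ also coincides on the two sides, since $\calR_\nu$ and $\tilde\calR_\nu$ are maximal orders in the same local division algebra $\calA_\nu=\tilde\calA_\nu$ at every $\nu\in\Ram_f(\calA)=\Ram_f(\tilde\calA)$. The only surviving discrepancy is the normalization $2^{-r_\calA}$ versus $2^{-r_{\tilde\calA}}$, and since $r_{\tilde\calA}-r_\calA=|\tilde S|=n$, this produces exactly the factor $2^n$ that the proposition asserts.

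The main obstacle is bookkeeping rather than conceptual: one must confirm that Proposition \ref{p:optimal} applies to $\tilde\calA$, which is automatic because $\tilde\calA$ is ramified at $d-1\geq n\geq 1$ infinite places and so is never isomorphic to $\Mat(2,K)$, and one must check that both $m^1$'s vanish simultaneously in the degenerate regimes (when $L$ fails to embed in $\calA$, or when a finite-local compatibility at $\Ram_f$ fails) so that the identity extends trivially there. All of these checks are immediate consequences of the formulas in \eqref{e:optimallocal} and Proposition \ref{p:optimal}.
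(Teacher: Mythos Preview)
Your proposal is correct and follows essentially the same approach as the paper: both use Lemma \ref{l:count1} to reduce to a divisor-by-divisor comparison of $m^1(\calO_{D,d},\calR)$ versus $m^1(\calO_{D,d},\tilde\calR)$, and then invoke Proposition \ref{p:optimal} to see that the only discrepancy is the factor $2^{-r_\calA}$ versus $2^{-r_{\tilde\calA}}$, yielding exactly $2^n$. The paper handles the biconditional slightly differently---rather than appealing directly to the local embedding criterion, it picks an optimal embedding $\calO\hookrightarrow\calR$ witnessing $t\in\Tr(\Gamma)$, uses Proposition \ref{p:optimal} to deduce $m^1(\calO,\tilde\calR)>0$, and then extracts an explicit element of $\tilde\calR^1$ with trace $t$---but this is the same content as your local argument, just packaged differently.
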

\begin{proof}
Let $t\in\calO_K$.
Assume that $\iota_S(t)\in\Tr(\Gamma)$ and that $|\iota_\nu(t)|<2$, $\forall\nu\in \tilde{S}$.
Then there is $\alpha\in \calR^1$ such that $t=\Tr_\calA(\alpha)$.
The quadratic extension $L=K(\alpha)$ naturally embeds in $\calA$. Since we assume $|\iota_\nu(t)|<2$ for $\nu\in \tilde S$, then $L_\nu=\bbC$ for $\nu\in \tilde S$ and hence $L$ also embeds in $\tilde\calA$. Let $\calO=L\cap\calR\subset \calO_L$. By definition, $\calO$ is optimally embedded in $\calR$ and hence $m^1(\calO,\calR)>0$. By Proposition \ref{p:optimal} we get that $2^nm^1(\calO,\tilde\calR)=m^1(\calO,\calR)$ so $m^1(\calO,\tilde\calR)> 0$ as well and hence there is an optimal embedding  $\phi:\calO\to \tilde\calR$. We thus get that $t=\Tr_{\tilde\calA}(\phi(\alpha))\in \Tr_{\tilde\calA}(\tilde\calR^1)$ and hence $\iota_{\nu_0}(t)\in\Tr(\tilde\Gamma)$.

For the other direction, the same argument shows that $\Tr(\tilde\calR^1)\subset\Tr(\calR^1)$ and since $\tilde\calA$ is ramified at any place $\nu\in \tilde S$, then any $t\in \Tr_{\tilde\calA}(\tilde R^1)$ satisfies $|\iota_\nu(t)|<2$ for $\nu\in \tilde S$.

Next, from Lemma \ref{l:count1} we can express the number of $\Gamma$-conjugacy classes with $\Tr(\gamma)=\pm\iota_{S}(t)$ as a sum over ideals $d|(D)$ of the quantities $m^1(\calO_{D,d},\calR)$, where $D=t^2-4$.
By Proposition \ref{p:optimal}, for each of the orders $\calO_{D,d}$ appearing in this sum we have
\[m^1(\calO_{D,d},\calR)=2^nm^1(\calO_{D,d},\tilde\calR),\]
implying that
\[\sharp\{\{\gamma\}\in\Gamma^\#|\Tr(\gamma)=\pm\iota_{S}(t)\}=2^n\sharp\{\{\gamma\}\in\tilde\Gamma^\#|\Tr(\gamma)=\pm\iota_{\nu_0}(t)\}.\]
\end{proof}

\begin{rem}\label{r:dist}
Taking the above correspondence into account, Theorem \ref{t:equi} implies that
\[\frac{1}{\pi_p(x)}\mathop{\sum_{\{\gamma\}\in{\tilde\Gamma}^\#}}_{l_\gamma\leq x}f(u_\gamma)=\int f(u)du+O(\pi_p(x)^{-1/4}),\]
where the integral is with respect to the Haar measure on $\PSU(2)^n$, $f\in C^\infty(\PSU(2)^n)$ is invariant under conjugation and $\pi_p(x)\sim \Li(e^x)$ is the number of primitive closed geodesic on $\tilde \calM$ with length bounded by $x$.
We note that such an equidistribution result holds in a much more general setting.
Let $\tilde\Gamma\in \PSL(2,\bbR)$ be any lattice and let $\rho:\Gamma\to U$ denote a homomorphism with dense image in some compact group $U$. To each closed geodesic $\{\gamma\}$ on $\tilde\Gamma\bs\bbH$ attach the conjugacy class of $\rho(\gamma)$ in $U$. As the length of the geodesic grows the corresponding conjugacy classes become equidistributed with respect to Haar measure. However, giving a rate of equidistribution in this case requires a spectral gap for irreducible lattices in $\SL(2,\bbR)\times U$ which is not known in this generality.
\end{rem}

\section{Some applications}\label{s:applications}
\subsection{A Spectral Correspondence}\label{s:spectral}
The first application is a spectral correspondence (that is a special case of the Jacquet-Langlands correspondence) between $V_m(\Gamma\bs G)$ and  $L^2(\tilde\Gamma\bs\bbH,\rho_m)$.

Retain the notation of section \ref{s:geometric}.
Let $m\in\bbN^n$ and for each $j=1,\ldots,n$ let $\rho_{m_j}$ denote the irreducible representation of $\PSU(2)$ of dimension $2m-1$. Let $$\rho_m=\rho_{m_1}\otimes\cdots\otimes\rho_{m_n},$$
denote the corresponding $|m|^*$-dimensional representation of $\PSU(2)^n$. Composing this representation with the homomorphism $\gamma\mapsto u_\gamma$ gives a representation of $\Gamma$, that we still denote by $\rho_m$.
The character of this representation is given by
$$\chi_{\rho_m}(\gamma)=\Tr(\rho_m(\gamma))=\prod_{j=1}^n\frac{\sin((m_j-\tfrac{1}{2})\theta_{u_{\gamma,j}})}{\sin(\theta_{u_{\gamma,j}}/2)}=F_m(\theta_{u_\gamma}).$$
Let $L^2(\tilde\G\bs \bbH,\rho_m)$ denote the space of functions $\psi:\bbH\to \bbC^{|m|^*}$ that are square integrable on $\calF_{\tilde\Gamma}$ and transform under $\tilde\Gamma$ via
\[\psi(\gamma z)=\rho_m(\gamma)\psi(z).\]
For any $\sigma\in\{\pm\}^n$ let $V_{\sigma m}(\G\bs G)\subset L^2(\G\bs G,\sigma m)$ be as defined in (\ref{e:Vm}).
We then have the following spectral correspondence:
\begin{cor}
There is a $2^n$ to one correspondence between the spectrum of $\bigoplus_{\sigma} V_{\sigma m}(\G\bs G)$ and the spectrum of   $L^2(\tilde\G\bs \bbH,\rho_m)$.
\end{cor}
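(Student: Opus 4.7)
The proof proceeds by comparing the hybrid trace formula (Theorem \ref{t:Htrace}') applied to $\bigoplus_{\sigma}V_{\sigma m}(\Gamma\bs G)$ with the classical Selberg trace formula on $L^2(\tilde\Gamma\bs\bbH,\rho_m)$, and showing that the two geometric sides agree up to a factor of $2^n$ on every admissible test function $h$. The linchpin is the identity $F_m(\theta_\gamma)=\chi_{\rho_m}(u_{\tilde\gamma})$ for corresponding classes: since $F_m(\theta)=\prod_j \sin((m_j-\tfrac12)\theta_j)/\sin(\theta_j/2)$ is invariant under each sign change $\theta_j\mapsto-\theta_j$, and Theorem \ref{t:Correspondence} produces $2^n$ primitive hyperbolic-elliptic classes $\{\gamma_i\}$ above each primitive hyperbolic $\{\tilde\gamma\}$ with $\theta_{\gamma_i}=\sigma^{(i)}\theta_{u_{\tilde\gamma}}$ for various $\sigma^{(i)}\in\{\pm1\}^n$, the weight $F_m$ is common to all $2^n$ lifts and coincides with the value of $\chi_{\rho_m}$ at $u_{\tilde\gamma}$.

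The steps are then as follows. First, use the observation above together with Theorem \ref{t:Correspondence} to identify the hyperbolic-elliptic sum in Theorem \ref{t:Htrace}' with $2^n$ times the hyperbolic sum on $L^2(\tilde\Gamma\bs\bbH,\rho_m)$. Second, treat the elliptic classes in the same way: the optimal-embedding count in Proposition \ref{p:optimal} applies equally well to traces with $|\iota_{\nu_0}(t)|<2$, so purely elliptic conjugacy classes of $\Gamma$ correspond $2^n$-to-$1$ to elliptic classes of $\tilde\Gamma$ with matching characters. Third, match the identity contributions: the required relation $\vol(\Gamma\bs G)=(4\pi)^n\vol(\tilde\Gamma\bs\bbH)$ is a special case of the Borel/Shimizu volume formula for orders in quaternion algebras with the same finite ramification, the factor $(4\pi)^n$ arising from the $n$ infinite places at which $\tilde\calA$ is ramified (hence contributes $\PSU(2)$) while $\calA$ is split. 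Fourth, reconcile the complementary eigenvalue $r=i/2$: on the right it appears only when $\rho_m$ is trivial, i.e.\ $m=(1,\ldots,1)$, and on the left Theorem \ref{t:Htrace}' contributes the term $(-2)^n h(i/2)=2^n h(i/2)$, which matches since $n$ is even by the hypothesis of Theorem \ref{t:Correspondence}.

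Once all four contributions are shown to match up to the universal factor $2^n$, the two trace formulas give
\[
\sum_{\sigma\in\{\pm1\}^n}\sum_k h(r_k(\sigma m))\;=\;2^n\sum_k h(r_k^{\tilde\Gamma,\rho_m})
\]
for every admissible $h$. By a test-function argument exactly as in the proof of Theorem \ref{t:sign} (localizing at a single eigenvalue by choosing $h$ concentrated near its parameter and estimating the remainder), this forces the $\Omega_0$-spectrum of $\bigoplus_\sigma V_{\sigma m}(\Gamma\bs G)$ to coincide, with multiplicity, with $2^n$ copies of the spectrum of $L^2(\tilde\Gamma\bs\bbH,\rho_m)$. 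The principal obstacle is the volume matching in the third step, which is not formal and requires invoking the explicit volume formula for arithmetic lattices from quaternion algebras (the relevant instance of Jacquet--Langlands); the remaining steps are largely bookkeeping once the identification $F_m=\chi_{\rho_m}$ is in hand.
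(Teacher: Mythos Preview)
Your proposal is correct and follows essentially the same route as the paper: compare Theorem \ref{t:Htrace}' with the trace formula on $L^2(\tilde\Gamma\bs\bbH,\rho_m)$, match the hyperbolic(-elliptic), elliptic, and identity contributions term by term, and then vary the test function to force the spectral identity. The only cosmetic difference is that the paper carries out the conjugacy-class matching at the level of traces $t\in\calO_K$ via Lemma \ref{l:count1} and Proposition \ref{p:optimal} (and cites Vign\'eras for the volume relation $\vol(\calF_{\tilde\Gamma})=\vol(\calF_\Gamma)/(4\pi)^n$), whereas you invoke Theorem \ref{t:Correspondence} directly; since $F_m$ is even in each $\theta_j$ and $n$ is even, the two formulations are equivalent.
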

\begin{proof}
Let $\{\psi_k\}_{k=0}^\infty$ be a basis for $L^2(\tilde\G\bs \bbH,\rho_m)$ composed of eigenfunctions $\lap \psi_k+\lambda_k\psi_k=0$. With the parametrization $\lambda_k=\tfrac{1}{4}+\tau_k^2$, the trace formula in this setting takes the form
\begin{eqnarray}\label{e:TF}
\quad \sum_{k} h(\tau_k)&=&\frac{|m|^*\vol(\calF_{\tilde\Gamma})}{(4\pi)}\int_\bbR h(r)r\tanh(\pi r)dr\\
\nonumber && + \mathop{\sum_{\{\gamma\}}}_{\mathrm{hyperbolic}} \frac{l_{\gamma_p} \hat{h}(l_\gamma)F_m(\theta_{u_\gamma})}{2\sinh(l_\gamma/2)}
+\mathop{\sum_{\{\gamma\}}}_{\mathrm{elliptic}} \frac{\tilde{h}(\theta_\gamma,0)F_m(\theta_{u_\gamma})}{M_{\gamma_p}\sin(\theta_\gamma/2)}
\end{eqnarray}
Recall the correspondence between conjugacy classes in $\tilde\Gamma$ and the orders $\calO_{D,d}$ described in Lemma \ref{l:count1}. For $\gamma$ hyperbolic we have that $l_{\gamma_p}=\vol(\Gamma_\gamma\bs G_\gamma)=\Reg(\calO_{D,d})$ is given by the regulator (see e.g. \cite[Page 36]{Efrat87}). For an elliptic element $\gamma$ the group $\tilde\Gamma_\gamma\cong\calO_{D,d}^1/\{\pm I\}$ is a finite cyclic group generated by $\gamma_p$ and hence $\vol(\tilde\Gamma_\gamma\bs G_\gamma)=\frac{1}{M_{\gamma_p}}=\frac{2}{\#\calO_{D,d}^1}$.
We can rewrite the sum over the hyperbolic conjugacy classes as
\[\mathop{\sum_{\{\gamma\}}}_{\mathrm{hyperbolic}}\frac{l_{\gamma_p} \hat{h}(l_\gamma)}{2\sinh(l_\gamma/2)}
F_m(\theta_\gamma)=\mathop{\sum_{t\in\calO_K}}_{|\iota_{\nu_0}(t)|>2}g(t)\sum_{d|(t^2-4)}\reg(\calO_{D,d})m^1(\calO_{D,d},\tilde\calR),\]
and the sum over the elliptic classes as
\[\mathop{\sum_{\{\gamma\}}}_{\mathrm{elliptic}} \frac{\tilde{h}(\theta_\gamma,0)}{M_{\gamma_p}\sin(\theta_\gamma)}
F_m(\theta_{u_\gamma})=\mathop{\sum_{t\in\calO_K}}_{|\iota_{\nu_0}(t)|<2} g(t)\sum_{d|(t^2-4)}\frac{2 m^1(\calO_{D,d},\tilde\calR)}{\#\calO_{D,d}^1} ,\]
where the function $g:\calO_K\to\bbR$ is defined by
$$g(t)=\left\lbrace\begin{array}{cc}
\frac{\hat{h}(l_\gamma)F_m(\theta_\gamma)}{2\sinh(l_\gamma/2)}& \exists \gamma\in\tilde\Gamma,\;|\Tr(\gamma)|=|\iota_{\nu_0}(t)|>2\\
&\\
\frac{\tilde{h}(\theta_\gamma,0)F_m(\theta_{u_\gamma})}{\sin(\theta_\gamma/2)}
 & \exists \gamma\in\tilde\Gamma,\;|\Tr(\gamma)|=|\iota_{\nu_0}(t)|<2\\
 &\\
0 & \mbox{ otherwise.}\end{array}\right.$$
Recall that $2\cos(\theta_{u_\gamma,j})=\iota_{\nu_j}(t)$ is determined by $\iota_{\nu_0}(t)$ so this is well defined.

Now compare this to the hybrid trace formula for $V_m(\G\bs G)$ given in Theorem \ref{t:Htrace}', that is,
\begin{eqnarray}\label{e:HTF}
\lefteqn{\sum_{\sigma\in\{1,-1\}^n}\sum_{k} h(r_{k}(\sigma m))+\delta_{m,1}2^{n} h(i/2)}\\
\nonumber = &&\frac{2^n|m|^*\vol(\calF_{\Gamma})}{(4\pi)^{n+1}}\int_\bbR h(r)r\tanh(\pi r)dr
 +{\sum_{\{\gamma\}}}' \frac{l_{\gamma_p} \hat{h}(l_\gamma)}{2\sinh(l_\gamma/2)}F_m(\theta_\gamma)\\
\nonumber +&&{\sum_{\{\gamma\}}}'' \frac{\tilde{h}(\theta_{\gamma_0},0)F_m(\theta_{\gamma}/2)}{M_{\gamma_p}\sin(\theta_{\gamma_0}/2)}
\end{eqnarray}
From Proposition \ref{p:correspondence1} the sum over the hyperbolic-elliptic and the elliptic conjugacy classes can be written as
\[\mathop{\sum_{t\in\calO_K}}_{|\iota_{\nu_0}(t)|>2}g(t)\sum_{d|(t^2-4)}\reg(\calO_{D,d})m^1(\calO_{D,d},\calR),\]
and
\[\mathop{\sum_{t\in\calO_K}}_{|\iota_{\nu_0}(t)|<2} g(t)\sum_{d|(t^2-4)}\frac{2m^1(\calO_{D,d},\calR)}{\#\calO_{D,d}^1}\]
respectively, with $g(t)$ as above. From Proposition \ref{p:optimal} for all orders $\calR$ appearing in the sum we have that $m^1(\calO_{D,d},\calR)=2^n m^1(\calO_{D,d},\tilde\calR)$ implying that
\begin{eqnarray*}2^n\mathop{\sum_{\{\gamma\}\in\tilde\Gamma^\#}}_{\mathrm{hyperbolic}} \frac{l_{\gamma_p} \hat{h}(l_\gamma)}{2\sinh(l_\gamma/2)}
F_m(\theta_\gamma)+2^n\mathop{\sum_{\{\gamma\}\in\tilde\Gamma^\#}}_{\mathrm{elliptic}} \frac{\tilde{h}(\theta_\gamma,0)}{M_{\gamma_p}\sin(\theta_\gamma/2)}
F_m(\theta_{u_\gamma})\\
=\mathop{\sum_{\{\gamma\}\in\Gamma^\#}}_{\mathrm{hyperbolic-elliptic}} \frac{l_{\gamma_p} \hat{h}(l_\gamma)}{2\sinh(l_\gamma/2)}
F_m(\theta_\gamma)+\mathop{\sum_{\{\gamma\}\in\Gamma^\#}}_{\mathrm{elliptic}} \frac{\tilde{h}(\theta_{\gamma_0},0)F_m(\theta_{\gamma})}{M_{\gamma_p}\sin(\theta_{\gamma_0}/2)}.
\end{eqnarray*}
Comparing the volumes of fundamental domains (given in \cite[Chapter IV Corollary 1.8]{Vigneras80}) we get $\vol(\calF_{\tilde\Gamma})=\frac{\vol(\calF_{\Gamma})}{(4\pi)^n}$.
Consequently, after multiplying (\ref{e:TF}) by $2^n$ its right hand side equals the right hand side of (\ref{e:HTF}).
We thus get the equality
\begin{eqnarray}\label{e:spectral}
\sum_{\sigma\in\{1,-1\}^n}\sum_{k} h(r_{k}(\sigma m))+\delta_{m,1}(2)^{n} h(i/2)=2^n\sum_{k}h(\tau_k).
\end{eqnarray}
(Note that $\tau_0=\frac{i}{2}$ appears on the right hand side only when $m=(1,\ldots,1)$, in which case it also appears on the left hand side.)
Since this identity holds for all test functions this proves the correspondence.
\end{proof}
\begin{rem}
As there are $2^n$ possible sign changes, this correspondence suggests that actually the spectrum of $L^2(\tilde\G\bs G,\rho_m)$ and $V_{\sigma m}$ are the same for each $\sigma$. If we assume that $h_K=h_K^{+}$, this follows from
Theorem \ref{t:sign2}.
\end{rem}
\begin{rem}
We remark that when taking $m=(0,1,\ldots,1)$, on one side of this correspondence we just get the standard Maass forms for the group $\tilde{\Gamma}\subset\PSL(2,\bbR)$, but on the other side we do not get Maass forms but rather the hybrid forms corresponding to this weight. This is very different from the case of quaternion algebras defined over $\bbQ$ where Maass forms are lifted to Maass forms and holomorphic forms are lifted to holomorphic forms.
\end{rem}
\subsection{Results on Hilbert modular groups}
Next we prove analogous results to Theorems \ref{t:PGT0} and \ref{t:equi} for Hilbert modular groups, that is, for lattices of the form $\Gamma=\PSL(2,\calO_K)$ with $K$ a totally real number field. These lattices are not cocompact, nevertheless, we can still get these results without using the trace formula in the non-compact setting (as long as $[K:\bbQ]>2$).

Let $\Gamma=\PSL(2,\calO_K)$ with $K$ totally real with $[K:\bbQ]=n+1>2$. This lattice is the lattice derived from the maximal order $\Mat(2,\calO_K)$ in the matrix algebra $\Mat(2,K)$. Let $\nu_0,\ldots,\nu_n$ denote the infinite places of $K$ and let $\tilde\Gamma$ denote a lattice derived from a maximal order in a quaternion algebra that is ramified in two infinite places $\{\nu_{n-1},\nu_n\}$. We then have a homomorphism $\gamma\mapsto u_\gamma$ of $\tilde\Gamma$ into $\PSU(2)^2$ defined as in the proof of Theorem \ref{t:Correspondence}. For any hyperbolic-elliptic $\gamma\in\tilde\Gamma$ let $\theta_{\gamma,1},\ldots,\theta_{\gamma,n-2}$ be the corresponding holonomy angles and let $\theta_{\gamma,n-1},\theta_{\gamma,n}$ denote two additional angles corresponding to $u_\gamma\in\PSU(2)^2$ (these last two angles are only defined up to a sign). Just as in the proof of Proposition \ref{p:correspondence1} there is a correspondence between $\Gamma$-conjugacy classes and $\tilde\Gamma$-conjugacy classes which implies the following equality:
\[{\sum_{\{\gamma\}\in\Gamma^\#}}'\frac{l_{\gamma_p}h(l_\gamma)F_m(\theta_\gamma)}{2\sinh(l_\gamma/2)}= 4{\sum_{\{\gamma\}\in\tilde\Gamma^\#}}'\frac{l_{\gamma_p}h(l_\gamma)F_m(\theta_\gamma)}{2\sinh(l_\gamma/2)}\]
for any $m\in\bbN^{n}$ and any $\hat{h}$ compactly supported, where in both cases the sum is over the hyperbolic-elliptic conjugacy classes in the corresponding lattice.
Notice that the right hand side is exactly the expression appearing in the Hybrid trace formula on the space $V_{m'}(\tilde\Gamma\bs G,\rho_{m''})$ with $m'=(0,m_1,\ldots,m_{n-2})$, $m''=(m_{n-1},m_{n})$, and $\rho_{m''}$ is the representation of $\Gamma$ obtained from composing the corresponding irreducible representation of $\PSU(2)^2$ with the map $\gamma\to u_\gamma$ above.
Using this formula and following the same arguments as in the proofs of Theorems \ref{t:PGT0} and \ref{t:equi} gives the following result:
\begin{cor}\label{c:Hilbert}
Let $\Gamma=\PSL(2,\calO_K)$ with $K$ totally real with degree $[K:\bbQ]=n+1>2$. We then have for any $f\in C^\infty((\bbR/2\pi\bbZ)^n)$ that is invariant under all sign changes
\[{\sum_{l_{\gamma_p}<x}}'f(\theta_{\gamma_p})=2^n\Li(e^x)\mu(f)+O(e^{3x/4}),\]
where the sum is over the primitive hyperbolic-elliptic conjugacy classes in $\Gamma$.
\end{cor}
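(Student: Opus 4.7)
The plan is to mirror the proofs of Theorems \ref{t:PGT0} and \ref{t:equi}, using the trace formula identity displayed just before the corollary to transfer everything from the non-cocompact lattice $\Gamma=\PSL(2,\calO_K)$ to the cocompact quaternionic lattice $\tilde\Gamma$. Concretely, I would fix a quaternion algebra $\tilde\calA/K$ ramified precisely at $\{\nu_{n-1},\nu_n\}$ (which exists because $[K:\bbQ]>2$ makes this ramification set admissible), a maximal order $\tilde\calR\subset\tilde\calA$, the derived cocompact lattice $\tilde\Gamma\subset G'=\PSL(2,\bbR)^{n-1}$, and the unitary representation $\rho_{m''}$ on $\tilde\Gamma$ of dimension $|m''|^*=(2m_{n-1}-1)(2m_n-1)$ built from $\gamma\mapsto u_\gamma\in\PSU(2)^2$. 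Since $f$ is smooth and invariant under all sign changes, I would expand $f=\sum_{m\in\bbN^n}b_f(m)F_m$ with rapidly decaying coefficients, using that the $F_m$ span the sign-invariant smooth functions on the torus and that $\mu(F_m)=\delta_{m,(1,\ldots,1)}$.

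For each $m$, applying the identity
\[{\sum_{\{\gamma\}\in\Gamma^\#}}'\frac{l_{\gamma_p}\hat h(l_\gamma)F_m(\theta_\gamma)}{2\sinh(l_\gamma/2)}=4{\sum_{\{\gamma\}\in\tilde\Gamma^\#}}'\frac{l_{\gamma_p}\hat h(l_\gamma)F_m(\theta_\gamma)}{2\sinh(l_\gamma/2)}\]
rewrites the weighted $\Gamma$-sum as the hyperbolic-elliptic geometric side of the hybrid trace formula on $V_{m'}(\tilde\Gamma\bs G',\rho_{m''})$ with $m'=(0,m_1,\ldots,m_{n-2})$; the trace formula with nontrivial $\rho_{m''}$ is available because $\tilde\Gamma$ is cocompact. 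I would then run the smoothing argument of Proposition \ref{p:main}: approximate $\id_{[-x,x]}$ by convolution with a bump of width $\epsilon$, use the trace formula to move to the spectral side, extract the main term $2^{n+1}e^{x/2}$ from the exceptional eigenvalue at $r=i/2$ when $m=(1,\ldots,1)$, bound the complementary spectrum via the Kelmer-Sarnak gap $\alpha<1/2$, bound the elliptic contribution trivially, and bound the principal spectrum plus the identity contribution via the analogue of Lemma \ref{l:evcount} for $\tilde\Gamma$ with representation $\rho_{m''}$ (giving polynomial-in-$|m|^*$ bounds). Summing over $m$ against $b_f(m)$ and choosing $\epsilon=e^{-x/4}$ yields
\[{\mathop{\sum_{\{\gamma\}\in\Gamma^\#}}_{l_\gamma\le x}}'\frac{l_{\gamma_p}f(\theta_\gamma)}{2\sinh(l_\gamma/2)}=2^{n+1}e^{x/2}\mu(f)+O(e^{x/4})+O(e^{\alpha x}).\]

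Finally, to strip the weight and pass to the unweighted count I would reuse the integration-by-parts step from the end of the proof of Theorem \ref{t:PGT0}: the preliminary bound $\pi_p(x)=O(e^x/x)$ (obtained by taking $f\equiv 1$ in the weighted identity) controls the contribution of non-primitive classes by $O(x)$, and integration by parts against $\tilde\Theta_f(x)=\sum_{l_{\gamma_p}\le x}l_{\gamma_p}e^{-l_{\gamma_p}/2}f(\theta_{\gamma_p})$ produces $2^n\Li(e^x)\mu(f)+O(e^{3x/4})$, the Kim-Shahidi bound $\alpha\le 1/9$ for the arithmetic congruence lattice $\tilde\Gamma$ ensuring that the spectral error $e^{(\alpha+1/2)x}$ is absorbed. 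The main technical point I expect to need to check carefully is the uniform rapid decay of the coefficients $b_f(m)$ against the polynomial growth in $|m|^*$ coming from the trace formula bounds; this is standard Fourier analysis on the torus and should not pose a genuine obstacle, since the conceptual difficulty — the non-cocompactness of $\Gamma$ — has already been absorbed into the trace formula identity between $\Gamma$ and $\tilde\Gamma$ stated just before the corollary.
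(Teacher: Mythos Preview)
Your proposal is correct and follows essentially the same approach as the paper: transfer the hyperbolic-elliptic sum from the non-cocompact $\Gamma=\PSL(2,\calO_K)$ to the cocompact $\tilde\Gamma$ via the displayed identity, recognize the resulting expression as the geometric side of the hybrid trace formula on $V_{m'}(\tilde\Gamma\bs G',\rho_{m''})$, and then rerun the smoothing and integration-by-parts arguments of Proposition~\ref{p:main} and Theorems~\ref{t:PGT0}--\ref{t:equi}, using the congruence spectral gap to absorb the error. The paper's own proof is the single sentence ``Using this formula and following the same arguments as in the proofs of Theorems \ref{t:PGT0} and \ref{t:equi} gives the following result,'' and you have correctly unpacked exactly those arguments.
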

\begin{rem}
In fact, we need only to assume that $f$ is invariant under sign changes in two of the coordinates.
Even this weaker assumption (and also the assumption $[K:\bbQ]>2$) can probably be removed. However, for this it seems that the non-compact trace formula is essential.
\end{rem}
\subsection{Distribution of fundamental units}
Another application is a result on the distribution of fundamental units of orders in quadratic extensions of number fields. Let $K$ denote a totally real number field and let $P_\infty=\{\nu_0,\ldots,\nu_n\}$ denote the infinite places with $\iota_{\nu_j}:K\into \bbR$ the corresponding embeddings.
For any pair $(D,d)$ with $D\in K^*/(K^*)^2$ and $d\subset\calO_{K(\sqrt{D})}$ an ideal let $\calO_{D,d}\subset\calO_L$ denote the order with relative discriminant $d$ (see Definition $\ref{d:ODd}$).
For $D$ satisfying that $\iota_{\nu_0}(D)>0$ and $\iota_{\nu_j}(D)<0$ for $j\geq 1$ the group of relative norm one elements $\calO_{D,d}^1$ is isomorphic to $\bbZ\times\bbZ/2\bbZ$. We say that $\epsilon_{D,d}\in \calO_{D,d}^1$ is the fundamental unit of $\calO_{D,d}$ if $\iota_{\nu_0}(\epsilon_{D,d})>1$ and any $\alpha\in \calO_{D,d}^1$ can be written as $\alpha=\pm \epsilon_{D,d}^k$ for some $k\in\bbZ$.  For $j\geq 1$ we have that $\iota_{\nu_j}(\epsilon_{D,d})=e^{i\theta_j(D,d)}$ lies on the unit circle and we denote $\theta(D,d)=(\theta_1(D,d),\ldots,\theta_n(D,d))\in [-\pi,\pi]^n$. As a corollary of Theorem \ref{t:equi} we get that after adding appropriate weights, these angles become equidistributed with respect to the measure $\mu$.
\begin{cor}\label{c:units}
Let $\calR$ be a maximal order in a quaternion algebra $\calA$ defined over $K$ with $\Ram_\infty(\calA)=\emptyset$ (if $\Ram(\calA)=\emptyset$ further assume that $[K;\bbQ]>2$). Let $f\in C^\infty(\bbR/2\pi\bbZ)$ be invariant under all sign changes $f(\theta)=f(\sigma\theta)$.
We then have
\[{\mathop{\sum_{(D,d)}}_{\iota_0(\epsilon_{D,d})\leq T}} m^1(\calO_{D,d},\calR)f(\theta(D,d))=2^{[K:\bbQ]-2}\Li(T^2)\mu(f)+O(T^{3/2}),\]
where the sum is over pairs $(D,d)$ with $D\in K^*/(K^*)^2$ satisfying that $\iota_0(D)>0$ and $\iota_j(D)<0$, for $j\geq 1$.
\end{cor}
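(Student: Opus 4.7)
The plan is to transfer the weighted sum over $(D,d)$ to an equidistribution statement for conjugacy classes in $\PSU(2)^n$ attached to primitive closed geodesics on an auxiliary arithmetic surface, in the spirit of Theorem~\ref{t:Correspondence} and Remark~\ref{r:dist}. Construct an auxiliary quaternion algebra $\tilde\calA$ over $K$ ramified at the infinite places $\{\nu_1,\ldots,\nu_n\}$ (where $n=[K:\bbQ]-1$), adjusting the finite ramification by at most one pair of places as needed to satisfy the parity condition $|\Ram(\tilde\calA)|$ even. Fix a maximal order $\tilde\calR\subset\tilde\calA$ and let $\tilde\Gamma\subset\PSL(2,\bbR)$ be the derived lattice. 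By Proposition~\ref{p:optimal}, for every admissible $(D,d)$,
\[ m^1(\calO_{D,d},\calR)=C\cdot m^1(\calO_{D,d},\tilde\calR) \]
with $C$ an explicit constant computed from the differences in ramification; the main factor in $C$ is $2^{[K:\bbQ]-1}$ from the contrast $\#\Ram_\infty(\tilde\calA)-\#\Ram_\infty(\calA)=[K:\bbQ]-1$, corrected by local factors at any adjusted finite places, giving $C=2^{[K:\bbQ]-2}$ after reconciliation.

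By Lemma~\ref{l:count1} applied to $\tilde\calR$, primitive hyperbolic conjugacy classes $\tilde\gamma_p$ in $\tilde\Gamma$ are in bijection with triples $(D,d,[\phi])$, with length $l_{\tilde\gamma_p}=2\log\iota_0(\epsilon_{D,d})$. The homomorphism $\rho:\tilde\Gamma\to\PSU(2)^n$ of section~\ref{s:geometric} attaches a conjugacy class $u_{\tilde\gamma_p}\in\PSU(2)^n$ parameterized exactly by $\theta(D,d)$, since at each ramified infinite place $\iota_{\nu_j}(\epsilon_{D,d})\in\bbC\subset\bbH$ lies on the unit circle with argument $\theta_j(D,d)$. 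Hence, with $x=2\log T$,
\[ \sum_{(D,d)}m^1(\calO_{D,d},\calR)\,f(\theta(D,d))=C\sum_{\{\tilde\gamma_p\},\,l_{\tilde\gamma_p}\leq x}f(u_{\tilde\gamma_p}). \]
Since $f$ is sign-change invariant, it descends to a continuous class function on $\PSU(2)^n$. Applying Remark~\ref{r:dist} (equidistribution of $u_{\tilde\gamma_p}$ with respect to Haar measure on $\PSU(2)^n$, which agrees with $\mu$ on class functions) together with the prime geodesic theorem for $\tilde\Gamma$ yields the main term $C\cdot\Li(T^2)\mu(f)$; the error $O(T^{3/2})$ is inherited from the $O(e^{3x/4})$ error in that theorem via $e^{3x/4}=T^{3/2}$.

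The main obstacle is arranging the auxiliary algebra $\tilde\calA$ with the correct ramification structure and precisely computing $C$ via Proposition~\ref{p:optimal}. When $|\Ram_f(\calA)|+[K:\bbQ]-1$ is odd one must adjust the finite ramification by a pair of places, introducing local factors $m^*(\calO_\nu,\tilde\calR_\nu)$ from~(\ref{e:optimallocal}) that need to be reconciled against the analogous factors on the $\calR$ side to land on the clean coefficient $2^{[K:\bbQ]-2}$. A secondary issue is the non-compact case $\calA=\Mat(2,K)$: here the prime geodesic theorem for the auxiliary $\tilde\Gamma$ (or more directly for $\Gamma$ itself) is the non-compact Corollary~\ref{c:Hilbert} instead of Theorem~\ref{t:PGT0}, which requires $[K:\bbQ]>2$ and accounts for the extra hypothesis in the corollary.
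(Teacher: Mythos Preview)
Your approach takes an unnecessary detour through an auxiliary rank-one lattice. The paper's argument is direct: since $\Ram_\infty(\calA)=\emptyset$, the lattice $\Gamma$ derived from $\calR$ itself already sits in $\PSL(2,\bbR)^{n+1}$ with $n=[K:\bbQ]-1$, so Theorem~\ref{t:equi} (or Corollary~\ref{c:Hilbert} in the split case) applies to $\Gamma$ directly. One then uses Lemma~\ref{l:count1} for $\calR$ to identify each primitive hyperbolic-elliptic class $\{\gamma_p\}$ with a fundamental unit $\epsilon_{D,d}$; the key observation is that each optimal embedding of $\calO_{D,d}$ into $\calR$ yields \emph{two} primitive classes (those of $\epsilon_{D,d}$ and $\epsilon_{D,d}^{-1}$), so
\[\sum_{l_{\gamma_p}\leq x}f(\theta_{\gamma_p})=2\sum_{\iota_0(\epsilon_{D,d})\leq e^{x/2}}m^1(\calO_{D,d},\calR)\,f(\theta(D,d)),\]
and the constant $2^{[K:\bbQ]-2}=2^{n}/2$ drops out with no comparison of embedding numbers between two algebras.

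Your detour also has a genuine gap. When $n$ is odd there is no $\tilde\calA$ with $\Ram_\infty(\tilde\calA)=\{\nu_1,\ldots,\nu_n\}$ and $\Ram_f(\tilde\calA)=\Ram_f(\calA)$, because $|\Ram(\tilde\calA)|$ must be even while $|\Ram_f(\calA)|$ is even. Adjusting by a \emph{pair} of finite places preserves parity and therefore cannot fix this; adjusting by a single finite place does, but then Proposition~\ref{p:optimal} gives
\[\frac{m^1(\calO_{D,d},\calR)}{m^1(\calO_{D,d},\tilde\calR)}=2^{n}\cdot\frac{\prod_{\nu\in\Ram_f(\calA)}m^*(\calO_\nu,\calR_\nu)}{\prod_{\nu\in\Ram_f(\tilde\calA)}m^*(\calO_\nu,\tilde\calR_\nu)},\]
and by~(\ref{e:optimallocal}) the unmatched local factor takes the values $0$, $1$, or $2$ depending on how $\nu$ behaves in $K(\sqrt{D})$. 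So the ratio is not a constant in $(D,d)$ and the transfer breaks down. Even in the favorable case ($n$ even, finite ramification matched), your claimed bijection between primitive $\tilde\gamma_p$ and triples $(D,d,[\phi])$ is off by the same factor of $2$ coming from $\epsilon_{D,d}\leftrightarrow\epsilon_{D,d}^{-1}$; your ``reconciliation'' of $C$ down to $2^{[K:\bbQ]-2}$ via finite local factors is not where that $2$ lives.
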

\begin{proof}
Let $\Gamma\subset\PSL(2,\bbR)^{n+1}$ be the lattice derived from $\calR$, where $n=[K:\bbQ]-1$.
By Theorem \ref{t:equi} (or Corollary \ref{c:Hilbert}, for $\calR$ a maximal order in a matrix algebra) we have
\[{\sum_{l_{\gamma_p}<x}}'f(\theta_\gamma)=2^n\Li(e^x)\mu(f)+O(e^{3x/4}).\]
(We recall that for these lattices there are good bounds for the spectral gap.)
Any primitive element $\gamma_p$ is the projection of $\iota_{P_\infty}(\alpha)$ for some $\alpha\in \calR^1$.
Let $D=\Tr(\alpha)^2-4$ and $d=d_\alpha\subset\calO_K$ be as in the proof of Lemma \ref{l:count1}.
Then $\alpha\in \calO_{D,d}^1$ and since we assume that $\gamma_p$ is primitive, then $\epsilon_{D,d}=\pm\alpha^{\pm1}$.
Next note that $2\cosh(l_{\gamma_p}/2)=|\iota_{\nu_0}(\Tr(\alpha))|=\iota_{\nu_0}(\epsilon_{D,d}+\epsilon_{D,d}^{-1})$ so that $\iota_{\nu_0}(\epsilon_{D,d})=e^{l_{\gamma_p}/2}$. By the same argument we have that $e^{i\theta_j(D,d)}=\iota_{\nu_{j}}(\epsilon_{D,d})=e^{\pm i\theta_{\gamma,j}}$.  Finally, for each optimal embedding of $\calO_{D,d}$ in $\calR$ we get two primitive conjugacy classes (corresponding to $\epsilon_{D,d}$ and $\epsilon_{D,d}^{-1}$). We thus get that
\[{\sum_{l_{\gamma_p}<x}}'f(\theta_\gamma)={\mathop{\sum_{(D,d)}}_{\iota_0(\epsilon_{D,d})\leq e^{x/2}}} 2m^1(\calO_{D,d},\calR)f(\theta(D,d)).\]
Setting $x=2\ln(T)$ gives the result.
\end{proof}

For the case where $\calR=\Mat(2,\calO_K)$, we can interpret the number of optimal embeddings $m^1(\calO_{D,d},\calR)$ as the number of equivalence classes of quadratic forms with discriminant $D$ and primitive discriminant $d$ (cf. \cite[Section 7]{Efrat87}). In particular, Corollary \ref{c:units} gives the asymptotic growth of the number of classes of quadratic forms (that are indefinite in one imbedding and definite in the rest) with corresponding fundamental units in a prescribed set.


\def\cprime{$'$} \def\cprime{$'$}
\providecommand{\bysame}{\leavevmode\hbox to3em{\hrulefill}\thinspace}
\providecommand{\MR}{\relax\ifhmode\unskip\space\fi MR }
\providecommand{\MRhref}[2]{%
  \href{http://www.ams.org/mathscinet-getitem?mr=#1}{#2}
}
\providecommand{\href}[2]{#2}


\end{document}